\newtheorem{lemma}[subsection]{Lemma}
\newtheorem{proposition}[subsection]{Proposition}
\newtheorem{theorem}[subsection]{Theorem}
\newtheorem{corollary}[subsection]{Corollary}
\newtheorem{fact}[subsection]{Fact}
\newenvironment{question}%
   {\refstepcounter{subsection}%
        \medbreak\noindent{\bf Question \thesubsection\space}}%
   {\par\medbreak}%
\newenvironment{remark}%
   {\refstepcounter{subsection}%
        \medbreak\noindent{\bf Remark \thesubsection\space}}%
   {\par\medbreak}%
\newenvironment{example}%
   {\refstepcounter{subsection}%
        \medbreak\noindent{\bf Example \thesubsection\space}}%
   {\par\medbreak}%
\newenvironment{proof}%
   {\medbreak\noindent{\it Proof:\space}}%
   {\par\noindent\vrule height 5pt width 5pt depth 0pt\smallbreak}%
\newcommand{\df}{\bf}
\let\sauvegardetiret=\-
\renewcommand{\-}[1]{\ifx#1-\penalty10000\hbox{-\relax}\penalty10000\else\sauvegardetiret#1\fi}
\long\def\footnotesymbol[#1]#2{\begingroup%
\def\thefootnote{\fnsymbol{footnote}}\footnote[#1]{#2}\endgroup}
\long\def\footnotestar#1{\begingroup%
\def\thefootnote{\fnsymbol{footnote}}\footnote[0]{#1}\endgroup}
\newcommand{\tq}{\,\big/\ }
\newcommand{\llor}{\mathop{\lor\mskip-6mu\relax\lor}}
\newcommand{\vvee}{\llor}
\newcommand{\lland}{\mathop{\land\mskip-6mu\relax\land}}
\newcommand{\wwedge}{\lland}
\newcommand{\mmeet}{\wwedge}
\newcommand{\jjoin}{\vvee}
\newcommand{\meet}{\wedge}
\newcommand{\join}{\vee}
\newcommand{\NN}{{\rm\bf N}}
\newcommand{\ZZ}{{\rm\bf Z}}
\newcommand{\RR}{{\rm\bf R}}
\let\sauvegardeuparrow=\uparrow
\renewcommand{\uparrow}{\mathord\sauvegardeuparrow}
\let\sauvegardedownarrow=\downarrow
\renewcommand{\downarrow}{\mathord\sauvegardedownarrow}
\let\sauvegardeUparrow=\Uparrow
\renewcommand{\Uparrow}{\mathord\sauvegardeUparrow}
\let\sauvegardeDownarrow=\Downarrow
\renewcommand{\Downarrow}{\mathord\sauvegardeDownarrow}
\newcommand{\conj}{\bigwedge}
\newcommand{\cconj}{\mathop{\conj\mskip-9mu\relax\conj}}
\newcommand{\disj}{\bigvee}
\newcommand{\ddisj}{\mathop{\bigvee\mskip-9mu\relax\bigvee}}
\newcommand{\llat}{{\cal L}_{\rm lat}}
\newcommand{\ltc}{{\cal L}_{\rm HA^*}}
\newcommand{\lha}{{\cal L}_{\rm HA}}
\newcommand{\ii}{{\mathfrak i}}
\newcommand{\p}{{\mathfrak p}}
\newcommand{\q}{{\mathfrak q}}
\newcommand{\UN}{{\rm\bf 1}}
\newcommand{\ZERO}{{\rm\bf 0}}
\newcommand{\cF}{{\cal F}}
\newcommand{\cO}{{\cal O}}
\newcommand{\cP}{{\cal P}}
\newcommand{\cS}{{\cal S}}
\newcommand{\cV}{{\cal V}}
\newcommand{\Var}{{\rm Var}}
\newcommand{\Spec}{\mathop{\rm Spec}\nolimits}
\newcommand{\jirr}{\mathop{{\cal I}^\join}\nolimits}
\newcommand{\mirr}{\mathop{{\cal I}^\meet}\nolimits}
\newcommand{\cjirr}{\mathop{{\cal I}^{!\join}}\nolimits}
\newcommand{\cmirr}{\mathop{{\cal I}^{!\meet}}\nolimits}
\newcommand{\jsupp}[1]{\mathop{{\rm Comp}^\join_{#1}}\nolimits}
\newcommand{\msupp}[1]{\mathop{{\rm Comp}^\meet_{#1}}\nolimits}
\newcommand{\Th}{\mathop{\rm Th}\nolimits}
\newcommand{\hgt}{\mathop{\rm height}\nolimits}
\newcommand{\cohgt}{\mathop{\rm coheight}\nolimits}
\newcommand{\codim}{\mathop{\rm codim}\nolimits}
\newcommand{\cdim}{\mathop{\rm (co)dim}\nolimits}
\newcommand{\rk}{\mathop{\rm rk}\nolimits}
\newcommand{\cork}{\mathop{\rm cork}\nolimits}
\newcommand{\crk}{\mathop{\rm (co)rk}\nolimits}
\newcommand{\Ker}{\mathop{\rm Ker}\nolimits}
\newcommand{\dist}[1]{\delta_{#1}}
\title{Codimension and pseudometric in co-Heyting algebras}
\author{
\begin{minipage}{6cm} 
Luck Darni\`ere \\ 
\begin{small}
D\'epartement de math\'ematiques \\[-1ex] 
Facult\'e des sciences \\[-1ex] 
Universit\'e d'Angers, France
\end{small}
\end{minipage} 
\begin{minipage}{6cm} 
Markus Junker \\
\begin{small}
Mathematisches Institut \\[-1ex] 
Abteilung f\"ur mathematische Logik \\[-1ex] 
Universit\"at Freiburg, Deutschland
\end{small}
\end{minipage}
\bigskip
}
\date{December 8, 2008}
\begin{document}

\maketitle

\begin{abstract}\scriptsize
In this paper we introduce a notion of dimension and codimension for
every element of a distributive bounded lattice $L$. These notions
prove to have a good behavior when $L$ is a co-Heyting algebra. In
this case the codimension gives rise to a pseudometric on $L$ which
satisfies the ultrametric triangle inequality. We prove that the
Hausdorff completion of $L$ with respect to this pseudometric is
precisely the projective limit of all its finite dimensional
quotients. This completion has some familiar metric properties, such as
the convergence of every monotonic sequence in a compact subset. It
coincides with the profinite completion of $L$ if and only if it is
compact or equivalently if every finite dimensional quotient of $L$ is
finite. In this case we say that $L$ is precompact. If $L$ is
precompact and Hausdorff, it inherits many of the remarkable
properties of its completion, specially those regarding the join/meet
irreducible elements. Since every finitely presented co-Heyting
algebra is precompact Hausdorff, all the results we prove on the
algebraic structure of the latter apply in particular to the former.
As an application, we obtain the existence for every positive integers
$n,d$ of a term $t_{n,d}$ such that in every co-Heyting algebra
generated by an $n$\--tuple $a$, $t_{n,d}(a)$ is precisely the maximal
element of codimension $d$.
\end{abstract}

\footnotestar{{\bf MSC 2000:} 06D20, 06B23, 06B30, 06D50}

\section{Introduction}%
\label{se:dim-and-codim}

We attach to every element of a distributive bounded lattice $L$ a
(possibly infinite) dimension and codimension, by copying 
analogous definitions in algebraic geometry. The definitions are
second order, in terms of chains of prime filters of $L$ ordered by
inclusion, but yield geometric intuition on the elements of $L$. In
the meantime we introduce a first order notion of rank and corank for
the elements of $L$. When the dual of $L$ (that is the same lattice with the reverse order)
is a Heyting algebra, we prove in section~\ref{se:definissabilite}
that the rank and dimension coincide, as well as the finite corank and
finite codimension. This ensures a much better behaviour for the
dimension and codimension (and for the rank and corank) than in
general lattices. Hence we restrict ourselves to this class, known as the
variety of {\df co-Heyting algebras} or {\df Brouwerian lattices}. 

By defining the dimension of $L$ itself as the dimension of its
greatest element, the connection is made with the so-called
``slices'' of Heyting algebras, studied by Hosoi \cite{hoso-1967},
Komori \cite{komo-1975} and Kuznetsov \cite{kuzn-1975}, among others.
More precisely, a co-Heyting algebra $L$ has dimension $d$ if and only
if its dual belongs to the $(d+1)$\--th slice of Hosoi. On the other
hand, the (co)dimension of an {\em element of} $L$ seems to be a new
concept in this area. 

In section~\ref{se:pseudometric} we introduce a pseudometric on
co-Heyting algebras based on the codimension, but delay until
section~\ref{se:completion} the study of complete co-Heyting
algebras. By elementary use of Kripke models and the finite model
property of intuitionistic propositional calculus, we check in
section~\ref{se:codim-HA-libre} that the filtration by finite
codimensions has several nice properties in any finitely generated
co-Heyting algebra $L$:
\begin{enumerate}
  \item\label{it:def-tnd}%
    For every positive integer $d$, the set $dL$ of elements of $L$ of
    codimension $\geq d$ is a principal ideal. 
  \item\label{it:def-precomp}%
    For every positive integer $d$, the quotient $L/dL$ is finite. 
  \item\label{it:dec-hausdorff}%
    If moreover $L$ is finitely presented, then
    $\displaystyle\bigcap_{d<\omega}dL=\{\ZERO\}$.
\end{enumerate}

Property (\ref{it:dec-hausdorff}) asserts that $L$ is Hausdorff (with
respect to the topology of the pseudometric we introduce). Property
(\ref{it:def-precomp}) shows that $L$ is precompact (in the sense that
its Hausdorff completion is compact). More generally we prove that a
variety $\cV$ of co-Heyting algebras has the finite model property if
and only if every algebra free in $\cV$ is Hausdorff. In such a
variety we have the following relations:

\begin{displaymath}
  \begin{array}{rclcl}
  \mbox{finitely generated} &\Longrightarrow& \mbox{precompact} & & \\
  \mbox{finitely presented} &\Longrightarrow& \mbox{precompact Hausdorff} 
                            &\Longrightarrow& \mbox{residually finite} \\
\end{array}
\end{displaymath}

Many algebraic properties probably known for finitely presented co-Heyting
algebras (but hard to find in the literature) generalize to precompact
Hausdorff co-Heyting algebras, as we show in
section~\ref{se:pre-compact}. We prove in particular that $L$ and its
completion have the same join irreducible elements, that all of them
are completely join irreducible and that every element $a \in L$ is the
complete join of its join irreducible components (the maximal join
irreducible elements smaller than $a$). We prove similar (but not
completely identical) results for the completely meet irreducible
elements. A characterisation of meet irreducible elements which are
not completely meet irreducible is also given. 

Finally we prove in section~\ref{se:completion} that the Hausdorff
completion of every co-Heyting algebra $L$ is also its
pro-finite-dimensional completion, that is the projective limit of all
its finite dimensional quotients. This completion has some nice metric
properties, such as the convergence of every monotonic sequence in a
compact subset. It coincides with the profinite completion of $L$ if
and only if it is compact or equivalently if every finite dimensional
quotient of $L$ is finite. 

So in the Hausdorff precompact case, our completion is nothing but the
classical profinite completion studied in
\cite{bezh-gehr-mine-mora-2006}. But there is an important difference:
in our situation every precompact co-Heyting algebra inherits many of
the nice properties of its completion, while in general the
properties of profinite co-Heyting algebras do not pass to their
dense subalgebras (which are exactly all residually finite
co-Heyting algebras, a much wider class than the class of precompact
Hausdorff ones). 

In the appendix we derive from (\ref{it:def-tnd}) a surprising
application: for all positive integers $n,d$ there exists a term
$t_{n,d}(x)$ with $n$ variables such that if $L$ is any co-Heyting
algebra generated by a tuple $a \in L^n$ then $t_{n,d}(a)$ is the
generator of $dL$. Possible connections with locally finite varieties
of co-Heyting algebras are discussed.

\begin{remark}\label{re:related-paper}
  The results of section~\ref{se:pre-compact} on precompact co-Heyting
  algebras are closely related to those that we derived in
  \cite{darn-junk-2008} from Bellissima's construction of a Kripke
  model for each finitely generated free Heyting algebra. Actually the
  approaches that we have developed here and in \cite{darn-junk-2008} 
  are quite complementary. The general methods of the
  present paper do not seem to be helpful for certain results, which
  are proper to finitely generated co-Heyting algebras (in particular
  those which concern the generators). On the other hand they allow us
  to recover with simple proofs many of the remarkable algebraic
  properties of finitely presented co-Heyting algebras, widely
  generalised to precompact Hausdorff co-Heyting algebras, without
  requiring any sophisticated tool of universal algebra nor the
  intricate construction of Bellissima. 
\end{remark}

\begin{remark}\label{re:pourquoi-dual}
  The reader accustomed to Heyting algebras will certainly find very
  annoying to reverse by dualisation all his/her habits. We apologise
  for this, but there were pretty good reasons for doing so. Indeed we
  have not invented the (co)dimension: we simply borrowed it from
  algebraic geometry {\it via} the Stone-Priestley duality (see
  example~\ref{ex:treillis-geom}). So we could not define in a
  different way the (co)codimension for the elements of a general
  lattice. Then it turns out that only in co-Heyting algebras we were
  able to prove that the codimension and the corank coincide when they
  are finite. Since all the results of this papers require the basic
  properties that we derive from this coincidence, we had actually no
  other choice than to focus on these algebras. 
\end{remark}

\paragraph{Acknowledgement.} 
The authors warmly thank Guram Bezhanishvili, from the New Mexico
state university, for the numerous accurate remarks and valuable
comments that he made on a preliminary version of this paper. 
Main parts of this work were done when the second author was invited
professor at the university of Angers in July 2005, and when the
first author was invited at the institute of Mathematics of Freiburg
in July 2008.

\section{Prerequisites}
\label{se:preprequesites}

\paragraph{Distributive bounded lattice.}
The language of distributive bounded lattices is
$\llat=\{\ZERO,\UN,\join,\meet\}$, the order being defined by $a \leq b$ iff 
$a=a \meet b$. We will denote by $\jjoin$ the join and by $\mmeet$ the meet of
any family of elements of a lattice. We write
$\conj$, $\disj$ for the logical connectives `and', `or' and $\cconj$,
$\ddisj$ for their iterated forms. 

We refer the reader to any book on lattices for the notions of
(prime) ideals and (prime) filter of $L$. We denote by $\Spec L$ the
{\df prime filter spectrum}, that is the set of all prime
filters of $L$. For every $a$ in $L$ let:
\begin{displaymath}
  F(a) = \{ \p \in \Spec L  \tq a \in \p\}
\end{displaymath}
As $a$ ranges over $L$ the family of all the $F(a)$'s forms a basis of
closed sets for the {\df Zariski topology} on $\Spec L$. It also forms
a lattice of subsets of $\Spec L$ which is isomorphic to $L$
(Stone-Priestley duality).

\paragraph{Dualizing ordered sets.}
An ordered set is a pair $(E, \leq$) where $E$ is a set and $\leq$ a
reflexive, symmetric and transitive binary relation. We do not require
the order to be linear. For every $x \in E$ we denote:
\begin{eqnarray*}
  x\uparrow  = \{y \in E \tq x \leq y\}, \quad x\downarrow  = \{y \in E \tq y \leq x\}, \\
  x\Uparrow  = \{y \in E \tq x < y\}, \quad x\Downarrow  = \{y \in E \tq y < x\}.
\end{eqnarray*}
The {\df dual of $E$}, in notation $E^*$, is simply the set $E$ {\em with
the reverse order}. For any $x \in E$ we will denote by $x^*$ the element
$x$ itself {\em seen as an element of} $E^*$, so that:
\begin{displaymath}
  y^* \leq x^* \iff x \leq y 
\end{displaymath}
The stars indicate that the first symbol $\leq$ refers to the order of
$E^*$, while the second one refers to the order of $E$. Similarly
$X^*=\{x^*\tq x\in X\}$ for every $X \subseteq E$ hence for instance
$x\downarrow=(x^*\uparrow)^*$.  

This apparently odd notation is specially convenient when $E$ carries
an additional structure. For example the dual $L^*$
of a distributive bounded lattice $L$ is obviously a distributive
bounded lattice and for every $a,b \in L$:
\begin{eqnarray*}
  \ZERO^* = \UN         &\hbox{and}& \UN^* =\ZERO\\
  (a \join b)^* = a^* \meet b^* &\hbox{and}& (a \meet b)^* = a^* \join b^*
\end{eqnarray*}

\paragraph{(Co)foundation rank and ordered sets.}
The appropriate generalisations to arbitrary ordinals of ``the length
of the longest chain'' of elements in $(E,\leq)$ are the foundation rank
and cofoundation rank of an element $x$ of $E$. The {\df foundation
rank} is inductively defined as follows:
\begin{displaymath}
  \begin{array}{lcl}
    \rk x \geq 0,           &      & \\
    \rk x \geq \alpha=\beta+1        & \iff & \exists y \in E,\ x>y\hbox{ and }\rk y \geq \beta, \\
    \displaystyle
    \rk x \geq \alpha=\bigcup_{\beta<\alpha} \beta & \iff & \forall \beta < \alpha,\ \rk x \geq \beta.
    \end{array}
\end{displaymath}
If there exists an ordinal $\alpha$ such that $\rk x \geq \alpha$ and $\rk x \ngeq \alpha+1$
then $\rk x = \alpha$ otherwise $\rk x = +\infty$. The {\df cofoundation rank}
is the foundation rank with respect to the reverse order, that is:
\begin{displaymath}
  \forall x \in E,\ \cork x = \rk x^* 
\end{displaymath}

\paragraph{(Co)dimension and lattices.}
For every element $a$ and every prime filter $\p$ of a distributive
bounded lattice $L$ we let:
\begin{itemize}
  \item $\hgt\p =$ the foundation rank of $\p$ in $\Spec L$
    (ordered by inclusion)
  \item $\cohgt\p =$ the cofoundation rank of $\p$ in $\Spec L$
  \item $\dim_L a=\sup\{\cohgt\p\tq\p \in \Spec L,\ a \in \p\}$
  \item $\codim_L a=\min\{\hgt\p\tq\p \in \Spec L,\ a \in \p\}$
\end{itemize}
Here we use the convention that the supremum (resp. minimum) of an
empty set of ordinals is $-\infty$ (resp. $+\infty$). Hence $\ZERO$ has
codimension $+\infty$ and is the only element of $L$ with dimension $-\infty$.
The subscript $L$ is omitted whenever it is clear from the context.

\begin{remark}\label{re:codim-join-max}%
  The following fundamental (and intuitive) identities
  follow immediately from the above definitions, and the fact that
  $F(a \join b)=F(a)\cup F(b)$:
\begin{eqnarray*} 
    \dim a \join b &=& \max(\dim a,\dim b) \\
  \codim a \join b &=& \min(\codim a,\codim b)
\end{eqnarray*}
\end{remark}

Finally we define the {\df dimension of the lattice} $L$, in
notation $\dim L$, as the dimension of $\UN_L$. Observe that:
\begin{eqnarray*}
  \dim L \leq d & \iff & \forall a \in L,\enskip \dim a \leq d \\
             & \iff & \forall a \in L\setminus\{\ZERO\},\enskip \codim a \leq d
\end{eqnarray*}

\begin{example}\label{ex:treillis-geom} 
  Consider the lattice\footnote{Note that this is the lattice of all
  Zariski closed subsets of $k^n$ hence a co-Heyting algebra, not a
  Heyting algebra.} $L(k^n)$ of all algebraic varieties in the affine
  $n$-space over an algebraically closed field $k$. The prime filter
  spectrum of $L(k^n)$ is homeomorphic to the usual spectrum of the
  ring $k[X_1,\dots,X_n]$. For any algebraic variety $V \subseteq k^n$, the
  (co)dimension of $V$ as an element of $L(k^n)$ is nothing but its
  geometric (co)dimension, that algebraic geometers define in terms of
  length of chains in $\Spec k[X_1,\dots,X_n]$. In particular $\dim L(k^n)
  = \dim k^n = n$.
\end{example}

\paragraph{(Co)rank and the strong order.}
For every $a,b$ in a distributive bounded lattice $L$ we let $b \ll a$
if and only if $F(b)$ is ``much smaller'' than $F(a)$, in the sense
that $F(b)$ is contained in $F(a)$ and has empty interior inside
$F(a)$ (with other words $F(a)\setminus F(b)$ is dense in $F(a)$). This is a
definable relation in $L$:
\begin{displaymath}
  b \ll a \iff \forall c,\ (a \leq b \join c \Rightarrow a \leq c) 
\end{displaymath}
This is a strict order on $L\setminus\{\ZERO\}$ (but not on $L$ because
$\ZERO\ll\ZERO$). Nevertheless we call it the {\df strong order} on $L$.
Obviously $b \ll a$ implies that $b<a$ whenever $a$ or $b$ is non-zero.
From now on, except if otherwise specified, when we will speak of the
{\df rank} and {\df corank} of an element $a$ of $L\setminus\{\ZERO\}$, in
notation $\rk_L a$ and $\cork_L a$, we will refer to the foundation
rank and cofoundation rank of $a$ in $L\setminus\{\ZERO\}$ {\em with respect to
the strong order} $\ll$. As usually the subscript $L$ will often be
omitted.

\paragraph{Co-Heyting algebras.} 
Let $\ltc=\llat \cup \{-\}$ be the language of co-Heyting algebras and
$\lha={\llat \cup \{\to\}}$ the language of Heyting algebras. The additional
operations are defined by: 
\begin{displaymath}
  a-b = (b^*\to a^*)^* = \min\{c\tq a \leq b \join c\} 
\end{displaymath}
So the strong order is {\em quantifier-free} definable in co-Heyting
algebras: 
\begin{displaymath}
  b \ll a \quad\iff\quad b \leq a = a-b
\end{displaymath}
Either by dualizing known results on Heyting algebras or by
straightforward calculation using Stone-Priestley duality (see
footnote~\ref{fo:stone-priestley-HA}) the following rules are easily
seen to be valid in every co-Heyting algebra: 
\begin{itemize}
  \item
    $a = (a-b) \join (a \meet b)$.
  \item
    $(a \join b) - c = (a-c) \join (b-c)$.
  \item
    $a - (b \join c) = (a-b)-c$.
  \item
    $a-(a-b)=(a \meet b)-(a-b)$.
  \item 
    $(a-b) \meet b \ll a$.
\end{itemize}
Note in particular that $b \leq a$ if and only if $b-a = \ZERO$, and that
$a-(a-b) \leq b$. We will use these rules in several calculations without
further mention. 
\smallskip

In a co-Heyting algebra $L$ we denote by $a \bigtriangleup b$ the 
{\df topological symmetric difference}%
\footnote{\label{fo:stone-priestley-HA}%
Note that $F(a-b)$ is the topological closure of $F(a) \setminus F(b)$ in
$\Spec L$. So $F(a \bigtriangleup b)$ is the topological closure of the usual
symmetric difference $(F(a)\setminus F(b))\cup(F(b)\setminus F(a))$.}:
\begin{displaymath} 
  a \bigtriangleup b = (a-b) \join (b-a) = (a^* \leftrightarrow b^*)^*
\end{displaymath} 
This is a commutative, non-associative operation. 
Note that $a \bigtriangleup b = \ZERO$ if and only if $a=b$. Moreover the following
``triangle inequality'' for $\bigtriangleup$ will be useful:
\begin{displaymath}
  a \bigtriangleup c \leq (a \bigtriangleup b) \join (b \bigtriangleup c)
\end{displaymath}

We remind the reader (dualizing basic properties of Heyting
algebras) that each ideal $I$ of $L$ defines a congruence $\equiv_I$ on $L$:
\begin{displaymath}
  a \equiv_I b \iff a \bigtriangleup b \in I 
\end{displaymath}
So the quotient $L/I$ carries a natural structure of co-Heyting
algebra which makes the canonical projection $\pi_I:L \to L/I$ an
$\ltc$\--morphism.

Conversely every congruence $\equiv$ on $L$ is of that kind. Indeed
$I_\equiv=\{a\in L \tq a \equiv \ZERO \}$ is an ideal of $L$ and $\equiv_{I_\equiv}$ is
precisely $\equiv$.

The {\df kernel} $\Ker f=f^{-1}(\{\ZERO\})$ of any morphism $f:L\to L'$ of
 co-Heyting algebra is an ideal of $L$. Given an ideal $I$ of $L$
there is a unique morphism $g:L/I \to L'$ such that $f=g \circ \pi_I$ if and
only if $I \subseteq \Ker f$. If $f$ is onto, then so is $g$. If moreover
$I=\Ker f$ then $g$ is an isomorphism and we will identify $L/I$ with
$L'$ and $f$ with $\pi_I$.

For every ordinal $d$ we set:
\begin{displaymath}
  dL = \{ a \in L \tq \codim a \geq d \}
\end{displaymath}
By remark~\ref{re:codim-join-max} this is an ideal of $L$. The {\df
generator of $dL$}, whenever it exists, will be denoted $\varepsilon_d(L)$. The
canonical projection $\pi_{dL}:L \to L/dL$
will simply be denoted $\pi_d$ when the context makes it unambiguous. 

\begin{remark}\label{re:identification-quotient}
  Given a {\em surjective} $\ltc$\--morphism $\varphi:L \to L'$, if
  $\varphi^{-1}(dL')=dL$ then there exists a unique isomorphism 
  $d\varphi:L/dL \to L'/dL'$ such that $\pi_{dL'} \circ \varphi = d\varphi \circ \pi_{dL}$. 
  In this situation we will identify $L'/dL'$ with $L/dL$ and say
  that:
\begin{displaymath}
  \varphi^{-1}(dL')=dL\quad \Longrightarrow \quad 
  L'/dL' = L/dL \hbox{ \ and \ } \pi_{dL'} \circ \varphi = \pi_{dL}
\end{displaymath}
\end{remark}

\paragraph{Pseudometric spaces.}

A map $\dist{}:X \times X \to \RR$
such that for every $x,y,z$ in $X$, $\dist{}(x,x)=0$,
$\dist{}(x,y)=\dist{}(y,x)\geq 0$ and $\dist{}(x,z) \leq \dist{}(x,y) +
\dist{}(y,z)$ (triangle inequality), is called a {\df pseudometric} on
the set $X$. It is a {\df metric} if and only if
moreover $\dist{}(x,y)\neq0$ whenever $x\neq y$. 
For example, if $(Y,\dist{Y})$ is a metric space and $f:X \to Y$ a
surjective map then $\dist{Y}(f(x),f(y))$ defines a pseudometric on
$X$. Every pseudometric on $X$ is of that kind. Indeed $\dist{}$
induces a metric $\dist{}'$ on the quotient $X'$ of $X$ by the
equivalence relation:
\begin{displaymath}
  x \sim y \iff \dist{}(x,y)=0   
\end{displaymath}
Lipschitzian maps between pseudometric spaces are defined as in the
metric case. So are the open balls and the topology determined by a
pseudometric. Lipschitzian functions are obviously continuous. Note
also that a pseudometric is a metric if and only its topology is
Hausdorff. So $X/\sim$ defined above is the largest Hausdorff quotient of
$X$. 

The {\df Hausdorff completion} of a pseudometric space $X$ is a complete
{\em metric} space $X'$ together with a continuous map $\iota_X:X \to X'$
such that $\iota_X(X)$ is dense in $X'$, and for every continuous map $f$
from $X$ to a complete metric space $X''$ there is a unique continuous
map $g:X'\to X''$ such that $f=g \circ \iota_X$. Note that if $f$ is
$\lambda$-Lipschitzian then so is $g$. The Hausdorff completion of $X$,
which is unique up to isomorphism by the above universal property, is
also the completion of the largest Hausdorff quotient of $X$.

\section{Axiomatization}%
\label{se:definissabilite}

In this section we prove that the (co)dimension and (co)rank coincide,
at least when they are finite, in every co\--Heyting algebra. One can
show that this in not true in every distributive bounded lattices.
Only the inequalities of proposition~\ref{pr:dim-plus-grand-que-rang}
below are completely general.

\begin{example}
  Even in co-Heyting algebras non finite codimensions and coranks do 
  not coincide in general. Here is a counter-example:
  \begin{displaymath}
    \ZERO<x_\omega<\cdots<x_2<x_1<x_0=\UN
  \end{displaymath}
  Since this is a chain, it is a co-Heyting algebra $L$ in which $\ll$
  coincides with $<$ on $L \setminus \{\ZERO\}$, hence $\cork x_\alpha=\alpha$ for every $\alpha\leq\omega$.
  On the other hand each element $x_\alpha$ generates a prime filter
  $\p_\alpha$. There is only one more prime filter which is $\p=\{x_\alpha\}_{\alpha<\omega}$. 
  Clearly $\hgt\p=\omega$ hence $\hgt\p_\omega=\omega+1$. It follows that: 
  \begin{displaymath}
    \codim x_\omega = \omega+1 > \cork x_\omega 
  \end{displaymath}
\end{example}

In this section we will make extensive use of the
following facts, proved for example in \cite{hoch-1969}, theorem~1
and its first corollary. A subset of $\Spec L$ which is a boolean
combination of basic closed sets $(F(a))_{a \in L}$ is called a {\df
constructible set} (a patch in \cite{hoch-1969}).  They form a basis
of open sets for another topology on $\Spec L$ usually called the {\df
constructible topology}. Recall that a topological space $X$ is
compact if and only if every open cover has a finite subcover.

\begin{fact}\label{fa:top-constructible}
  $\Spec L$ is compact with respect to the constructible topology.
  Consequently every constructible subset of $\Spec L$ is compact with
  respect to this topology since it is closed in $\Spec L$.
\end{fact}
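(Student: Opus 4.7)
The plan is to realize $\Spec L$ as a closed subspace of the Cantor cube $\{0,1\}^L$ (with the product topology) and then invoke Tychonoff. First I would identify each prime filter $\p$ with its characteristic function $\chi_\p \in \{0,1\}^L$. Recall that a subset $\p \subseteq L$ is a prime filter if and only if its characteristic function is a lattice homomorphism to the two-element lattice $\{0,1\}$; concretely, this is captured by the closed conditions
\[
\chi(\ZERO)=0,\quad \chi(\UN)=1,\quad \chi(a\meet b)=\chi(a)\wedge\chi(b),\quad \chi(a\join b)=\chi(a)\vee\chi(b)
\]
for every $a,b\in L$. Each of these equalities defines a clopen, hence closed, subset of $\{0,1\}^L$, and $\Spec L$ is the intersection of all of them.

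Next I would compare the two topologies. The product topology on $\{0,1\}^L$ is generated by the subbasis of sets $\{\chi\tq \chi(a)=1\}$ and $\{\chi\tq \chi(a)=0\}$, which pull back along the embedding $\p\mapsto\chi_\p$ to $F(a)$ and $\Spec L\setminus F(a)$ respectively. These are exactly the subbasic open sets of the constructible topology. Hence the constructible topology on $\Spec L$ coincides with the subspace topology induced from $\{0,1\}^L$. Since $\{0,1\}^L$ is compact by Tychonoff and $\Spec L$ is closed in it, $\Spec L$ is compact in the constructible topology.

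For the second assertion I would note that in the constructible topology each basic closed set $F(a)$ of the Zariski topology is by construction clopen. Boolean combinations of clopen sets are clopen, so every constructible subset of $\Spec L$ is closed in the constructible topology. A closed subset of a compact space is compact, which gives the conclusion.

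The only real obstacle is verifying that the subspace topology on $\Spec L$ inside $\{0,1\}^L$ genuinely agrees with the constructible topology; once the bijective correspondence between prime filters and $\{0,1\}$-valued lattice homomorphisms is in hand, this is a matching of subbases and essentially routine. Everything else is a direct application of Tychonoff's theorem.
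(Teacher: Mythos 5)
Your proof is correct, but note that the paper itself does not prove this fact at all: it is quoted from Hochster (theorem~1 and its first corollary of \cite{hoch-1969}), where it is established for the ``patch'' topology on spectral spaces arising from commutative rings. Your argument --- identifying prime filters with bounded-lattice homomorphisms $L \to \{0,1\}$, realizing $\Spec L$ as a closed subspace of the cube $\{0,1\}^L$, matching the subspace topology with the constructible topology, and invoking Tychonoff --- is the standard self-contained proof, essentially the construction underlying Stone--Priestley duality. The individual steps all check out: each homomorphism condition constrains only finitely many coordinates, hence is clopen, so their intersection is closed; the subbases correspond ($\{\chi \tq \chi(a)=1\}$ pulls back to $F(a)$, $\{\chi \tq \chi(a)=0\}$ to its complement); and a constructible set is a finite Boolean combination of the sets $F(a)$, which are clopen constructibly, so it is clopen, hence closed, hence compact. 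The one point worth writing out fully is the equivalence between prime filters and $\{0,1\}$-valued homomorphisms: $\chi(a\meet b)=\chi(a)\wedge\chi(b)$ encodes simultaneously closure under meets and upward closure (take $a\leq b$, so $a=a\meet b$), $\chi(a\join b)=\chi(a)\vee\chi(b)$ encodes primality, and $\chi(\ZERO)=0$, $\chi(\UN)=1$ give properness and nonemptiness. Compared with the paper's citation, your route buys a short, purely lattice-theoretic argument with no detour through spectra of rings; what it costs is having to verify the (routine but not free) dictionary between prime filters and homomorphisms, which Hochster's framework packages differently.
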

\begin{fact}\label{fa:cloture-constructible}
  If a prime filter $\p$ belongs to the closure (with respect to the
  Zariski topology) of a constructible subset $S$ of $\Spec L$ then
  it belongs to the closure of a point of $S$, that is $\p\supseteq\q$ for
  some $\q \in S$.
\end{fact}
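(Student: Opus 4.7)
The plan is to exploit the compactness of the constructible topology, in which the sets $\Spec L \setminus F(a)$ become clopen. First I would unwind the conclusion: the Zariski closure of a singleton $\{\q\}$ is $\bigcap_{a \in \q} F(a) = \{\p' \in \Spec L \tq \q \subseteq \p'\}$, so the phrase ``$\p$ belongs to the closure of a point of $S$'' translates literally to ``$\q \subseteq \p$ for some $\q \in S$''. Similarly, the hypothesis that $\p$ lies in the Zariski closure of $S$ unfolds as: for every $a \in L$, if $S \subseteq F(a)$ then $a \in \p$; contrapositively, whenever $a \notin \p$ there is at least one $\q \in S$ with $a \notin \q$.

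For each $a \notin \p$, set $C_a = S \setminus F(a)$. Since $F(a)$ is basic closed in the Zariski topology, it is clopen in the constructible topology, so each $C_a$ is a clopen subset of $S$ in that topology. The goal reduces to finding a single $\q \in \bigcap_{a \notin \p} C_a$, for then $a \notin \q$ for every $a \notin \p$, i.e., $\q \subseteq \p$.

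The finite intersection property of $\{C_a\}_{a \notin \p}$ is immediate from primality. Indeed, if $a_1, \ldots, a_n \notin \p$ then $a_1 \join \cdots \join a_n \notin \p$ (since $\p$ is a prime filter), so the reformulated hypothesis supplies some $\q \in S$ with $a_1 \join \cdots \join a_n \notin \q$; since $\q$ is upward closed this forces all $a_i \notin \q$, hence $\q \in C_{a_1} \cap \cdots \cap C_{a_n}$. By Fact~\ref{fa:top-constructible} the space $S$ is compact in the constructible topology, so the full intersection $\bigcap_{a \notin \p} C_a$ is non-empty, producing the required $\q$.

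The only conceptual step is choosing the right topology: the Zariski topology is too coarse to make the complements $\Spec L \setminus F(a)$ into closed sets, but the constructible topology does exactly this. Everything else is routine unwinding of definitions together with the finite-intersection characterization of compactness and the primality of $\p$.
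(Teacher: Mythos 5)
Your proof is correct. Note, however, that the paper does not prove this fact at all: it is stated as imported background, with a pointer to \cite{hoch-1969} (theorem~1 and its first corollary), so there is no internal proof to compare against. What you have done is reconstruct, from the paper's own Fact~\ref{fa:top-constructible}, the standard argument behind Hochster's corollary: the unwinding of the Zariski closure of $S$ as $\bigcap\{F(a) \tq S \subseteq F(a)\}$ (valid because the $F(a)$ are closed under finite unions, $F(a)\join F(b)=F(a\join b)$, hence form all the basic closed sets), the observation that each $F(a)$ is clopen in the constructible topology so that $C_a = S\setminus F(a)$ is relatively closed in $S$, the finite intersection property obtained from primality of $\p$ (if $a_1,\dots,a_n\notin\p$ then $a_1\join\cdots\join a_n\notin\p$, and upward closure of $\q$ pushes this down to each $a_i$), and compactness of the constructible subset $S$ to pass to the full intersection. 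Each of these steps checks out, including the degenerate cases ($S=\emptyset$ makes the hypothesis vacuous, and $\ZERO\notin\p$ guarantees the index family is non-empty). This makes the paper self-contained at this point, at the modest cost of a compactness argument of exactly the same flavour the authors themselves use later (e.g.\ with $mF(a)$ in proposition~\ref{pr:codim-egale-corang}), so nothing in your route is foreign to the paper's toolkit.
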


\begin{proposition}\label{pr:dim-plus-grand-que-rang}
  For every nonzero element $a$ in a distributive bounded lattice: 
  \begin{displaymath}
    \dim a \geq \rk a \hbox{\quad and\quad}\codim a \geq \cork a 
  \end{displaymath}
\end{proposition}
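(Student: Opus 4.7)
The approach is a transfinite induction on $\alpha$, establishing in parallel that $\rk a \geq \alpha$ forces $\dim a \geq \alpha$ and that $\cork a \geq \alpha$ forces $\codim a \geq \alpha$. The arguments are geometric, carried out on $\Spec L$ via Stone--Priestley duality. The key translation, used in both inductions, is that by unwinding the first-order defining formula of $\ll$ together with the identity $F(b \join c) = F(b) \cup F(c)$, the relation $b \ll a$ is equivalent to the density of $F(a) \setminus F(b)$ in $F(a)$ for the Zariski topology.

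For the dimension inequality I would prove by induction that $\rk a \geq \alpha$ produces some $\p \in \Spec L$ with $a \in \p$ and $\cohgt \p \geq \alpha$. The successor step $\beta \to \beta+1$ is the heart of the argument: extract $b \in L \setminus \{\ZERO\}$ with $b \ll a$ and $\rk b \geq \beta$, invoke the inductive hypothesis to obtain $\q \ni b$ with $\cohgt \q \geq \beta$ (so also $a \in \q$, using that $b \leq a$), and then note that $\q \in F(a)$ lies in the Zariski closure of the constructible set $F(a) \setminus F(b)$. Fact~\ref{fa:cloture-constructible} then supplies $\p \subseteq \q$ with $a \in \p$ and $b \notin \p$; the separation by $b$ forces $\p \subsetneq \q$, hence $\cohgt \p \geq \beta+1$.

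The codimension inequality is dual: I show that $\cork a \geq \alpha$ forces $\hgt \p \geq \alpha$ for every $\p \ni a$. At the successor step, $a \ll b$ with $\cork b \geq \beta$ ensures $a \leq b$, so every $\p \ni a$ automatically contains $b$; density of $F(b) \setminus F(a)$ in $F(b)$ together with fact~\ref{fa:cloture-constructible} produces $\q \subsetneq \p$ with $b \in \q$ but $a \notin \q$, and the inductive hypothesis delivers $\hgt \q \geq \beta$. The limit step on the codimension side is immediate from the definition of $\hgt$; on the dimension side it follows from the supremum nature of $\dim$, except that successors of limit ordinals require compactness of $\Spec L$ in the constructible topology (fact~\ref{fa:top-constructible}) to consolidate the witnesses at each $\beta < \alpha$ into a single prime filter with $\cohgt \geq \alpha$.

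The main obstacle in either induction is the successor step: promoting the purely topological density condition embedded in $\ll$ to a concrete strict specialization of prime filters. Fact~\ref{fa:cloture-constructible} is exactly the tool for this, and once it fires the remainder of the argument is mechanical; the only additional subtlety is the compactness input needed to handle successors of limit ordinals on the dimension side.
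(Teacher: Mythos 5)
Your codimension induction is correct and is essentially the paper's own argument: there the statement being proved is universal over all $\p \in F(a)$ (codimension is a minimum), so limit stages follow directly from the definition of the foundation rank, and successor stages need only fact~\ref{fa:cloture-constructible}, exactly as you deploy it. The successor stages of your dimension induction likewise match the paper. The genuine gap is your limit stage on the dimension side. Consolidating, for a limit $\alpha$, the witnesses $\p_\beta$ with $\cohgt \p_\beta \geq \beta$ into a single $\p \ni a$ with $\cohgt \p \geq \alpha$ is \emph{not} a consequence of fact~\ref{fa:top-constructible}: compactness would require each set $\{\p \in F(a) \tq \cohgt \p \geq \beta\}$ to be closed in the constructible topology, and coheight is not a patch-closed condition --- a patch limit of points of unbounded coheight can perfectly well have coheight $0$.

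In fact no argument can close this gap, because the strengthened statement you induct on is false at $\alpha=\omega$. Let $T_n$ be a chain $\ZERO<t_1<\cdots<t_n<\UN$ and let $L$ be the bounded sublattice of $\prod_{n<\omega}T_n$ consisting of the sequences whose coordinates lie in $\{\ZERO,\UN\}$ for all but finitely many $n$. Writing $c^{(n)}_i$ for the sequence with $n$-th coordinate $t_i$ and $\ZERO$ elsewhere, one checks $c^{(n)}_1 \ll \cdots \ll c^{(n)}_n \ll \UN$ for every $n$, so $\rk_L \UN \geq \omega$; but every prime filter of $L$ is either supported on a single coordinate $n$ (hence of coheight $\leq n$) or induced by a non-principal ultrafilter on $\omega$ (hence maximal, of coheight $0$), so no prime filter has coheight $\geq\omega$. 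Worse, adjoining a new top element $a$ to this $L$ yields a distributive bounded lattice (in fact a co-Heyting algebra) in which $\UN_L \ll a$, hence $\rk a \geq \omega+1$, while $\dim a = \omega$ (the unique new prime filter $\{a\}$ lies below all the others and has coheight exactly $\omega$); so the inequality $\dim a \geq \rk a$ itself fails at infinite ordinals, and every proof must break exactly where yours does. In fairness, the paper's proof silently makes the same move --- at $\alpha=\beta+1$ it reads off from $\dim b \geq \beta$ ``a prime filter of coheight at least $\beta$ containing $b$'', which is legitimate only when $\beta$ is $0$ or a successor --- so you have put your finger on the real difficulty; your proposed repair simply cannot work. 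What both arguments do prove, and it is all the sequel of the paper uses (theorem~\ref{th:dim-et-rang}, proposition~\ref{pr:codim-egale-corang}), is the proposition for finite ranks, where no limit ordinal intervenes and your induction is complete as written.
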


\begin{proof}
By induction on the ordinal $\alpha$ we prove that if $\crk a \geq \alpha$ then
$\cdim a \geq \alpha$. This is trivial if $\alpha=0$ because $a\neq\ZERO$.

Assume $\alpha = \beta+1$. Let $b \ll a$ in $L \setminus \{\ZERO\}$ be such that $\rk b \geq
\beta$. The induction hypothesis gives a prime filter $\q$ of coheight at
least $\beta$ containing $b$. Then $\q$ also contains $a$, and since $b \ll
a$, $\q$ belongs to the Zariski closure of $F(a) \setminus F(b)$. It follows that $\q
\supset \p$ for some $\p$ in $F(a) \setminus F(b)$ by
fact~\ref{fa:cloture-constructible}. Then $\cohgt\p \geq \beta+1$, hence
$\dim a \geq \beta+1$.

Assume $\alpha$ is a limit ordinal. For every $\beta<\alpha$, $\rk a \geq \beta$ hence
$\dim a \geq \beta$ by the induction hypothesis, so $\dim a \geq \alpha$. 

\smallskip

We turn now to the codimension. Let $b$ in $L \setminus \{\ZERO\}$ be such that
$\cork b \geq \alpha$. Assume that
$\alpha = \beta+1$ and let $a$ in $L$ be such that $b \ll a$ and $\cork a \geq \beta$.
Choose any prime filter $\q$ containing $b$. Then $\q$ also contains
$a-b$ because $b < a = a-b$. So $\q$ belongs to the closure of 
$F(a) \setminus F(b)$ hence to the closure of some $\p$ in $F(a) \setminus F(b)$ by
fact~\ref{fa:cloture-constructible}. By induction hypothesis 
$\codim a \geq \beta$ hence $\hgt\p \geq \beta$ and thus $\hgt\q \geq \beta + 1$.
Since this is true for every $\q \in F(b)$ it follows that 
$\codim b \geq \beta+1$.

The limit case is as above.
\end{proof}

\begin{proposition}\label{pr:dim-egale-rang}
  In co-Heyting algebras the dimension coincides with the foundation
  rank with respect to $\ll$ for every nonzero element.
\end{proposition}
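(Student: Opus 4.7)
The plan is to establish the reverse of the inequality $\dim a \geq \rk a$ provided by Proposition~\ref{pr:dim-plus-grand-que-rang}, namely $\dim a \leq \rk a$ for every nonzero $a$. I proceed by transfinite induction on an ordinal $\alpha$, proving that $\dim a \geq \alpha$ implies $\rk a \geq \alpha$. The base case is immediate since $a \neq \ZERO$ gives $\rk a \geq 0$, and the limit case follows directly from the induction hypothesis and the definition of foundation rank.

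The heart of the argument is the successor step $\alpha = \beta + 1$. If $\dim a \geq \beta + 1$, then there exist prime filters $\p \subsetneq \p'$ with $a \in \p$ and $\cohgt \p' \geq \beta$. The goal is to produce a nonzero element $b \in \p'$ with $b \ll a$: then $\dim b \geq \cohgt \p' \geq \beta$, so by induction $\rk b \geq \beta$, whence $\rk a \geq \beta + 1$ by the very definition of foundation rank with respect to $\ll$.

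To construct such a $b$, I would pick any $x \in \p' \setminus \p$ and set $b = (a-x) \meet x$. The listed co-Heyting identity $(a-x) \meet x \ll a$ gives $b \ll a$ for free. To see $b \in \p'$, use the decomposition $a = (a-x) \join (a \meet x)$ together with primality of $\p$: since $a \in \p$ while $a \meet x \notin \p$ (because $x \notin \p$ and $\p$ is upward closed), we get $a - x \in \p \subseteq \p'$; since $x \in \p'$ as well and $\p'$ is a filter, $b \in \p'$, in particular $b \neq \ZERO$.

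The main obstacle is precisely this last construction: one needs an element of $\p'$ lying strongly below $a$, and this is exactly where the co-Heyting structure becomes essential, through the identity $(a-x) \meet x \ll a$. Without a clause producing $\ll$\--small elements from the difference operation, there would be no natural way to convert the existence of a long specialization chain above $\p$ into a long $\ll$\--descending chain below $a$, which reflects the fact that the equality $\dim = \rk$ fails in general distributive bounded lattices.
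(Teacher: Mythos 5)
Your proof is correct and takes essentially the same route as the paper's: the same transfinite induction, the same choice of $x \in \p' \setminus \p$, and the same witness $b = (a-x) \meet x$, with $b \ll a$ coming from the listed identity. The only cosmetic difference is that you verify $a-x \in \p$ algebraically, via $a = (a-x) \join (a \meet x)$ and primality of $\p$, where the paper reads it off from $F(a-x) = \overline{F(a)\setminus F(x)}$; both justifications are sound.
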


\begin{proof}
It suffices to prove, by induction on the ordinal $\alpha$, that if $\dim a
\geq \alpha$ then $\rk a \geq \alpha$. This is obvious if $\alpha=0$ since $a \neq \ZERO$. The
limit case is clear as well.

Assume that $\alpha=\beta+1$, let $\p$ be a prime filter of coheight
at least $\beta+1$ containing $a$. Let $\q \supset \p$ be a prime filter
of coheight $\beta$ and $a'$ an element of $\q \setminus \p$. Then $\p$ 
belongs to $F(a) \setminus F(a')$, hence to $F(a-a')$. In other words $a-a'$
belongs to $\p$, hence to $\q$. Let $b=a' \meet (a-a')$, then $b \ll a$ by
construction. Moreover $b \in \q$ hence $\dim b \geq \beta$. By induction
hypothesis it follows that $\rk b \geq \beta$, hence $\rk a \geq \beta+1$.
\end{proof}

For every element $a$ in a distributive bounded lattice $L$ let $mF(a)$
denote the set of minimal elements of $F(a)$, that is the prime filters
which are minimal with respect to the inclusion among those containing
$a$.

\begin{lemma}\label{le:min-compact}
  Let $L$ be a co-Heyting algebra and $a,b\in L$.
  \begin{displaymath}
    mF(a-b)=mF(a) \cap F(b)^c = mF(a) \cap F(a-b)
  \end{displaymath}
  So the Zariski and the constructible topologies induce the same
  topology on $mF(A)$. It follows that $mF(a)$ is a Boolean space, and
  in particular a compact space.
\end{lemma}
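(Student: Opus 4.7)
The plan is to establish the two displayed equalities and then derive the topological statements. The core technical tool is the identification (from the footnote) of $F(a-b)$ with the Zariski closure of $F(a) \setminus F(b)$, which together with fact~\ref{fa:cloture-constructible} reduces minimality questions in the spectrum to primeness arguments.

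For the first equality $mF(a-b) = mF(a) \cap F(b)^c$: if $\p \in mF(a-b)$ then $a \in \p$ since $a-b \leq a$; if $b \in \p$, fact~\ref{fa:cloture-constructible} provides some $\q \in F(a) \setminus F(b)$ with $\q \subseteq \p$, necessarily strict because $b \in \p \setminus \q$, while primeness of $\q$ applied to $a \leq b \vee (a-b)$ forces $a-b \in \q$, contradicting minimality of $\p$ in $F(a-b)$. Minimality of $\p$ in $F(a)$ follows by the same primeness trick applied to a hypothetical strict sub-prime-filter containing $a$. The reverse inclusion is direct: for $\p \in mF(a) \cap F(b)^c$, primeness gives $a-b \in \p$, and any $\q \subsetneq \p$ with $a-b \in \q$ also contains $a$, contradicting minimality in $F(a)$.

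For the second equality $mF(a) \cap F(b)^c = mF(a) \cap F(a-b)$, the elegant move is to apply the first equality to $c := (a-b) \meet b$. The listed co-Heyting identity $c \ll a$ is equivalent to $a - c = a$, hence $mF(a) = mF(a-c) = mF(a) \cap F(c)^c$, i.e.\ every $\p \in mF(a)$ misses $c = b \meet (a-b)$. Since such $\p$ is prime, contains $a$, and $a \leq b \vee (a-b)$, exactly one of $b$ and $a-b$ lies in $\p$, which yields the equality. The topological coincidence on $mF(a)$ is then immediate: the second equality gives $F(d)^c \cap mF(a) = F(a-d) \cap mF(a)$, which is Zariski-closed while trivially Zariski-open, so $F(d) \cap mF(a)$ and $F(d)^c \cap mF(a)$ are Zariski-clopen in $mF(a)$ for every $d$, making every basic constructible set Zariski-open on $mF(a)$.

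For the Boolean/compact conclusion, I would show $mF(a)$ is closed in the constructible topology of $\Spec L$ and then invoke fact~\ref{fa:top-constructible}. The same primeness argument shows that $\p \in F(a)$ fails to be minimal exactly when it contains some element of the form $b \meet (a-b)$; indeed one direction is the primeness trick from paragraph 2, and conversely if $b \meet (a-b) \in \p$ then $a-b \in \p$, so fact~\ref{fa:cloture-constructible} gives $\q \in F(a) \setminus F(b)$ with $\q \subseteq \p$, strict because $b \in \p \setminus \q$. Hence $F(a) \setminus mF(a) = \bigcup_{b \in L} F(b \meet (a-b))$, a union of constructibly clopen sets and therefore constructibly open; combined with the constructibly open $F(a)^c$ this makes $\Spec L \setminus mF(a)$ constructibly open. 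The Boolean structure transfers from the constructible topology on $\Spec L$, and coincidence with Zariski on $mF(a)$ concludes. The main obstacle I expect is just setting up the connection between the strong order $\ll$ and minimality of prime filters, but the rewriting $a - c = a$ for $c \ll a$ makes the second equality reduce cleanly to the first via the substitution $c = b \meet (a-b)$.
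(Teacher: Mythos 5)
Your proof is correct; it rests on the same two pillars as the paper's (the footnote identity $F(a-b)=\overline{F(a)\setminus F(b)}$ and facts~\ref{fa:top-constructible}, \ref{fa:cloture-constructible}), but it diverges in two places. For the first equality the paper simply observes that the minimal points of $F(a)\setminus F(b)$ are $mF(a)\setminus F(b)$, and that by fact~\ref{fa:cloture-constructible} passing to the Zariski closure of a constructible set does not change its minimal points; your element-wise primeness argument is an unfolded version of the same. For the second equality the routes genuinely differ: the paper notes that $mF(a-b)\subseteq mF(a)\cap F(a-b)$ by the first equality, and that conversely a point of $F(a-b)$ which is minimal in $F(a)$ is \emph{a fortiori} minimal in the smaller set $F(a-b)\subseteq F(a)$ --- a two-line set-inclusion argument; your detour through $c=b\meet(a-b)$ and the listed identity $(a-b)\meet b\ll a$ is correct but heavier, and its payoff is the dichotomy (every $\p\in mF(a)$ contains exactly one of $b$, $a-b$) that you later reuse. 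That reuse is where your write-up adds real content: the paper dismisses the topological statements as following ``immediately'' from the second equality, whereas compactness genuinely requires showing that $mF(a)$ is closed in the constructible topology, which is exactly your identity $F(a)\setminus mF(a)=\bigcup_{b\in L}F\bigl(b\meet(a-b)\bigr)$ combined with fact~\ref{fa:top-constructible}. The only steps you leave tacit --- that Zariski-open sets are constructible-open (needed to finish the coincidence of the two topologies on $mF(a)$) and that the constructible topology on $\Spec L$ is Hausdorff and zero-dimensional (so that its closed subspaces are Boolean) --- are standard and harmless.
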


\begin{proof}
The two last statements follow immediately from the second
equality, so let us prove these two equalities.

We already mentioned in footnote~\ref{fo:stone-priestley-HA} 
that $F(a-b)=\overline{F(a)\setminus F(b)}$, where the
line stands for the Zariski closure in $\Spec L$. The set of minimal 
elements of $F(a)\setminus F(b)$ is clearly $mF(a)\setminus F(b)$. So by
fact~\ref{fa:cloture-constructible}:
\begin{displaymath}
   mF(a-b)=mF(a) \setminus F(b)
\end{displaymath}
This proves the first equality. It implies that $mF(a-b)\subseteq mF(a)$
hence:
\begin{displaymath}
  mF(a-b) \subseteq mF(a) \cap F(a-b)  
\end{displaymath}
Conversely every element of $F(a-b)$ which is minimal in $F(a)$ is {\it
a fortiori} minimal in $F(a-b)$ because $F(a-b)\subseteq F(a)$. So the second
equality is proved.
\end{proof}

\begin{proposition}\label{pr:codim-egale-corang}
  Let $a$ be any nonzero element of a co-Heyting algebra $L$, let $\p$ a
  prime filter of $L$ and $n$ a positive integer. 
  \begin{enumerate}
    \item If $\hgt\p \geq n$ then $\p$ contains an element of codimension at
      least $n$. 
    \item If $\codim a \geq n$ then $\cork a \geq n$.
  \end{enumerate}
\end{proposition}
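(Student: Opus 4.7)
I will prove both statements simultaneously by induction on $n$, the case $n=0$ being trivial. Assume both hold at level $n$.

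For statement (1) at $n+1$: given $\hgt \p \geq n+1$, pick $\p_1 \subsetneq \p$ prime with $\hgt \p_1 \geq n$. The inductive hypothesis on (1) produces $b \in \p_1$ with $\codim b \geq n$, and the inductive hypothesis on (2) gives $\cork b \geq n$. Choose $c \in \p \setminus \p_1$; since $b \wedge c \leq c \notin \p_1$ forces $b \wedge c \notin \p_1$, primality of $\p_1$ applied to $b = (b-c) \vee (b \wedge c)$ gives $b-c \in \p_1 \subseteq \p$. Set $a := c \wedge (b-c) \in \p$, so $a \neq \ZERO$; the rule $(b-c) \wedge c \ll b$ yields $a \ll b$, whence $\cork a \geq n+1$, and Proposition \ref{pr:dim-plus-grand-que-rang} gives $\codim a \geq n+1$.

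For statement (2) at $n+1$: given $\codim a \geq n+1$, for each $\p \in mF(a)$ pick $\p' \subsetneq \p$ prime with $\hgt \p' \geq n$; since $\p$ is minimal in $F(a)$, we have $a \notin \p'$. Induction gives $b_\p \in \p'$ with $\codim b_\p \geq n$, and the same primality argument as in (1) puts $b_\p - a$ in $\p' \subseteq \p$. The sets $F(b_\p - a) \cap mF(a)$ are clopen in $mF(a)$ by Lemma \ref{le:min-compact} (on which Zariski and constructible topologies coincide), and they cover $mF(a)$; compactness yields a finite subcover, hence $B := \bigvee_{i=1}^k (b_{\p_i} - a)$ satisfies $mF(a) \subseteq F(B)$. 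By Lemma \ref{le:min-compact}, $mF(a-B) = mF(a) \cap F(B)^c = \emptyset$, so $a - B = \ZERO$ and $a \leq B$. Using the distributivity $(\bigvee x_i) - a = \bigvee (x_i - a)$ together with the identity $(x-a)-a = x - (a \vee a) = x-a$, one computes $B - a = B$, so $a \ll B$. Since each $b_{\p_i} - a \leq b_{\p_i}$ gives $\codim B \geq n$, the inductive hypothesis on (2) yields $\cork B \geq n$, and combining $a \ll B$ with $\cork B \geq n$ produces $\cork a \geq n+1$.

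The main obstacle is (2): one must globalise the local per-prime witnesses $b_\p - a$ into a single element $B$ that not only dominates $a$ but satisfies the strict relation $a \ll B$. Compactness of $mF(a)$ from Lemma \ref{le:min-compact} is what forces $a \leq B$, but the strong-order relation $a \ll B$ relies on the algebraic identity $B - a = B$, which is made available only through the distributivity of $-$ over $\vee$ and the idempotence of subtraction by $a$.
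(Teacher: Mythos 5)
Your proof is correct, and part (2) is essentially the paper's own argument: the same witnesses $b_\p - a \in \p$ obtained by primality, the same compactness of $mF(a)$ from lemma~\ref{le:min-compact}, the same join $B$ with $a \ll B$ and $\codim B \geq n$, and the same appeal to the inductive hypothesis (your derivation of $a \leq B$ via $mF(a-B)=\emptyset$, rather than via the fact that every prime filter in $F(a)$ contains a minimal one, is a cosmetic variant resting on the same standard fact). Part (1) is where you genuinely diverge, and your route is simpler. The paper runs a \emph{second} compactness argument there: for each $\q \in mF(b)$ it chooses $b_\q \in \p\setminus\q$, extracts a finite subfamily whose intersection misses $mF(b)$, sets $a = b \meet \mmeet_{i\leq r} b_{\q_i}$, and proves $a \ll b$ by showing $mF(b-a)=mF(b)$; the element of codimension $\geq n$ is then obtained because $a \ll b$ raises codimension by one --- the increment argument already inside the proof of proposition~\ref{pr:dim-plus-grand-que-rang} (the paper's closing line of this step is garbled into a corank inequality, but that is what is meant). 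You instead produce the witness in a single algebraic step, $a = c \meet (b-c)$ for one element $c \in \p\setminus\p_1$: it lies in $\p$ by primality of $\p_1$, and $a \ll b$ is immediate from the listed rule $(b-c)\meet c \ll b$, so no compactness is needed in (1) at all. The cost is architectural: since you convert $a \ll b$ into a corank bound and then back into a codimension bound via proposition~\ref{pr:dim-plus-grand-que-rang}, you need $\cork b \geq n$, i.e.\ statement (2) at the previous level, which forces the two statements into a simultaneous induction; in the paper, (1) is proved by a self-contained induction (staying on the codimension side throughout) and (2) is then layered on top of it. Both architectures are sound; yours trades one compactness argument for the interleaving of the two statements.
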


\begin{proof}
If $n=0$ the first statement is trivial. Assume that it has been 
proved for $n-1$ with $n\geq 1$. Let $\p' \subset \p$ be such that 
$\hgt \p' \geq n-1$. The induction hypothesis gives $b\in\p'$ such that
$\codim b \geq n-1$. For any $\q \in mF(b)$, $\p \not\subseteq \q$ so 
we can choose $b_\q \in \p \setminus \q$. The intersection of all the
$F(b_\q)$'s with $mF(b)$ is empty. By lemma~\ref{le:min-compact},
$mF(b)$ is compact hence there exists a finite subfamily
$(F(b_{\q_i}))_{i \leq r}$ whose intersection with $mF(b)$ is empty. Let:
\begin{displaymath}
  a = b \meet \mmeet_{i \leq r}b_{\q_i} 
\end{displaymath}
By construction $a \in \p$, $a<b$ and:
\begin{displaymath}
  mF(b)\setminus F(a) \supseteq mF(b) \setminus \bigcap_{i\leq r}F(b_{\q_i}) = mF(b)
\end{displaymath}
So $mF(b)\setminus F(a)=mF(b)$, but $mF(b-a)=mF(b)\setminus F(a)$
by lemma~\ref{le:min-compact}, so we have
proved that $mF(b-a)=mF(b)$. Hence $F(b-a)=F(a)$ by
fact~\ref{fa:cloture-constructible}, that is $b-a=b$. 
It follows that $a \ll b$ hence $\cork b \geq \cork a + 1 = n$.

\smallskip

The second statement is trivial as well if 
$n=0$. So let us assume that $n\geq1$ and the result is proved for $n-1$. 
For every $\p \in mF(a)$, $\hgt\p\geq n$ so we can choose a prime filter
$\q \subset \p$ such that $\hgt\q \geq n-1$. The previous point then gives 
an element $a_\p\in\q$ such that $\codim a_\p \geq n-1$. By construction
$a\notin\q$ because $\p$ is minimal in $F(a)$, hence $a_\p-a\in\q$ and {\it a
fortiori} $a_\p-a\in\p$. So $mF(a)$ is covered by 
$\big(F(a_\p-a)\big)_{\p\in mF(a)}$.
This is an open cover for the constructible topology, and $mF(a)$ is
compact for this topology by lemma~\ref{le:min-compact}, so there is a
finite subfamily $(F(a_{\p_i}-a))_{i\leq r}$ which covers $mF(a)$. Let
$b=\jjoin_{i\leq r}(a_{\p_i}-a)$. By construction $mF(a)$ is contained in $F(b)$
hence $a \leq b$, and moreover: 
\begin{displaymath}
  b-a = \jjoin_{i \leq r} (a_{\p_i}-a)-a 
      = \jjoin_{i \leq r}  a_{\p_i}-a = b
\end{displaymath}
That is $a \ll b$, so $\cork a \geq \cork b +1$. Finally:
\begin{displaymath}
  \codim b = \min_{i \leq r} \codim (a_{\p_i}-a) 
           \geq \min_{i \leq r} \codim a_{\p_i} \geq n-1
\end{displaymath}
By induction hypothesis it follows that $\cork b \geq n-1$, hence $\cork
a \geq n$. 
\end{proof}

Once put together, propositions~\ref{pr:dim-plus-grand-que-rang},
\ref{pr:dim-egale-rang} and \ref{pr:codim-egale-corang} imply that
$\dim a = \rk a$, and that $\codim a = \cork a$ whenever $\cork a$ is
finite, for every non zero element $a$ in a co-Heyting algebra $L$.
This result is the corner stone of this paper. Indeed the
(co)dimension has geometrically intuitive properties
(remark~\ref{re:codim-join-max}) that the (co)rank seems to be
lacking. On the other hand the definition of the (co)dimension is not
first-order, while the (co)rank is defined only in terms of the strong
order which is first order definable. When both coincide the best of
the two notions can be put together. Let us emphasize this coincidence.

\begin{theorem}\label{th:dim-et-rang}
  For every co-Heyting algebra $L$, every element $a$ of $L$ and every
  positive integer $d$:
  \begin{eqnarray*}
    \dim a \geq d &\iff& \exists x_0,\dots,x_d,\ \ZERO\neq x_d \ll\cdots\ll x_0\leq a \\
    \codim a \geq d &\iff& \exists x_0,\dots,x_d,\ a\leq x_d \ll\cdots\ll x_0 
  \end{eqnarray*}
  In particular\,\footnote{Recall that we defined $dL=\{a \in L\,/\, \codim
  a \geq d\}$.} $dL$ is uniformly definable by a positive existential
  $\ltc$\--formula.
\end{theorem}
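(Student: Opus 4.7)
The plan is to package the previously established coincidences $\dim a = \rk a$ (Proposition~\ref{pr:dim-egale-rang}) and, in the finite case, $\codim a = \cork a$ (Propositions~\ref{pr:dim-plus-grand-que-rang} and~\ref{pr:codim-egale-corang}) together with the inductive definitions of foundation and cofoundation rank with respect to $\ll$. Once this is done, each of the two displayed equivalences unfolds into a chain statement of the required shape.

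For the dimension equivalence, iterating the inductive clause of foundation rank $d$ times gives: $\rk a \geq d$ iff there exist $a \gg x_1 \gg \cdots \gg x_d$ with $x_d \neq \ZERO$. The $(\Rightarrow)$ direction then follows at once by setting $x_0 := a$. For $(\Leftarrow)$, given a chain $\ZERO \neq x_d \ll \cdots \ll x_0 \leq a$, monotonicity of $F(-)$ (filters are upward closed, so $x_0 \leq a$ implies $F(x_0) \subseteq F(a)$) yields $\dim a \geq \dim x_0$, and then Proposition~\ref{pr:dim-plus-grand-que-rang} gives $\dim a \geq \dim x_0 \geq \rk x_0 \geq d$. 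The codimension equivalence is dual: unfolding gives $\cork a \geq d$ iff there exist $a \ll y_1 \ll \cdots \ll y_d$ (with $y_d \neq \ZERO$ when $a \neq \ZERO$), so setting $x_d := a$ and $x_{d-i} := y_i$ handles $(\Rightarrow)$; for $(\Leftarrow)$, the antitonicity $a \leq x_d \Rightarrow \codim a \geq \codim x_d$ (again from $F(a) \subseteq F(x_d)$) combined with $\codim x_d \geq \cork x_d \geq d$ finishes the argument. The degenerate case $a = \ZERO$ is handled by taking all $x_i := \ZERO$, noting that $\ZERO \ll \ZERO$ and that $\codim \ZERO = +\infty$.

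For the uniform definability claim, the strong order itself is already quantifier-free positive in $\ltc$: $b \ll a$ is equivalent to the conjunction of the two positive equations $b \vee a = a$ and $a = a - b$. Concatenating $d$ such atomic statements for the consecutive links, together with $a \leq x_d$, inside $d+1$ existential quantifiers yields a positive existential $\ltc$-formula that defines $dL$, uniformly in $d$.

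The main obstacle is essentially cosmetic: all the substantive content lies in the three preceding propositions, and the only care needed is in matching up the chain with the $\leq a$ relaxation (handled by monotonicity of $F(-)$) and verifying the degenerate zero case. No new ideas are required beyond what has already been developed in the section.
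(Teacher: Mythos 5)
Your proposal is correct and matches the paper's own proof, which simply observes that the two equivalences follow from Propositions~\ref{pr:dim-plus-grand-que-rang}, \ref{pr:dim-egale-rang} and~\ref{pr:codim-egale-corang} and that $\ll$ is positively quantifier-free definable ($b \ll a$ iff $b \leq a$ and $a-b=a$). Your write-up merely makes explicit the routine details the paper leaves implicit (unfolding the (co)foundation rank, monotonicity of $\dim$/antitonicity of $\codim$ under $\leq$ via $F(-)$, and the $a=\ZERO$ case).
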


\begin{proof}
The two equivalences have already been proved. The last statement
follows since $\ll$ is definable by a positive quantifier free
formula: $b \ll a$ iff $b \leq a \conj a-b=a$. 
\end{proof}

\begin{corollary}\label{co:TC-morph-codim}%
  Let $\varphi:L \to L'$ be an $\ltc$\--morphism and $d$ a positive integer.
  \begin{enumerate}
    \item\label{it:morph-lipshitz}%
      $\varphi(dL) \subseteq dL'$.
    \item\label{it:phi-de-dL}%
      If $\varphi$ is surjective then: 
      \begin{enumerate}
        \item $\varphi(dL) = dL'$.
        \item\label{it:dL-Ker-phi}%
          $\dim L' < d \iff dL \subseteq \Ker \varphi$
        \item 
          $\Ker \varphi \subseteq dL \iff \varphi^{-1}(dL')=dL$.
      \end{enumerate}
    \item\label{it:phi-de-epsilon}%
      If $\varphi$ is surjective and $dL$ is principal then $dL'$
      is principal and $\varphi(\varepsilon_d(L))=\varepsilon_d(L')$.
  \end{enumerate}
\end{corollary}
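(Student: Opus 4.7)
The plan is to prove part~2(a) first---this is the only place requiring real work---and then derive everything else by formal manipulations, using throughout the positive existential description of $dL$ supplied by theorem~\ref{th:dim-et-rang}. Part~1 is immediate: any $\ltc$\--morphism preserves the lattice operations and $-$, hence the quantifier-free formula defining $b \ll a$ recalled in the preliminaries, and therefore the positive existential formula defining $dL$ given by theorem~\ref{th:dim-et-rang}. So $a \in dL$ implies $\varphi(a) \in dL'$, which takes care of part~1.

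For the reverse inclusion $dL' \subseteq \varphi(dL)$ in part~2(a), the idea is to lift a witnessing chain from $L'$ to $L$ step by step, using the identity $(\alpha - \beta) \meet \beta \ll \alpha$ listed in the preliminaries. Given $a' \in dL'$ with witness $a' \leq x'_d \ll \cdots \ll x'_0$, I first pick any preimage $y_0$ of $x'_0$ by surjectivity of $\varphi$. Assuming $y_i \in L$ has been built with $\varphi(y_i) = x'_i$, I pick any preimage $v$ of $x'_{i+1}$ and set $y_{i+1} = (y_i - v) \meet v$; the identity gives $y_{i+1} \ll y_i$, and a short computation using $x'_{i+1} \ll x'_i$ (hence $x'_i - x'_{i+1} = x'_i$) yields $\varphi(y_{i+1}) = (x'_i - x'_{i+1}) \meet x'_{i+1} = x'_{i+1}$. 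Finally I lift $a'$ to some $b$ and set $a = b \meet y_d$: then $\varphi(a) = a' \meet x'_d = a'$, and the chain $a \leq y_d \ll \cdots \ll y_0$ witnesses $a \in dL$ via theorem~\ref{th:dim-et-rang}. The only genuine obstacle is spotting the right formula for $y_{i+1}$; everything else is routine.

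Parts~2(b), 2(c) and~3 then fall out of 2(a) by short arguments. For~2(b), $\dim L' < d$ is equivalent to $dL' = \{\ZERO\}$ by the definition of $\dim L'$, and the claim reduces to the tautology $\varphi(dL) = \{\ZERO\} \iff dL \subseteq \Ker\varphi$. For~2(c), the $\Leftarrow$ direction is immediate from $\Ker\varphi \subseteq \varphi^{-1}(dL')$; conversely, assuming $\Ker\varphi \subseteq dL$ and $\varphi(a) \in dL'$, I use~2(a) to produce $a_0 \in dL$ with $\varphi(a_0) = \varphi(a)$, so that $a \bigtriangleup a_0 \in \Ker\varphi \subseteq dL$; then $a - a_0$ and $a \meet a_0$ both lie in the ideal $dL$ (the latter because $a \meet a_0 \leq a_0$), so the identity $a = (a - a_0) \join (a \meet a_0)$ forces $a \in dL$. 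For~3, monotonicity of $\varphi$ combined with $\varphi(dL) = dL'$ shows that $\varphi(\varepsilon_d(L))$ belongs to $dL'$ and is an upper bound of $dL'$; therefore $dL'$ is principal with generator $\varphi(\varepsilon_d(L)) = \varepsilon_d(L')$.
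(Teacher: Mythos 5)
Your proposal is correct and takes essentially the same approach as the paper: part~1 by preservation of the positive existential formula of theorem~\ref{th:dim-et-rang}, part~2(a) by lifting along surjectivity with the key correction formula $(a-b)\meet b$ (the paper sets $x=a-b$, $y=x\meet b$, which is exactly your $y_{i+1}=(y_i-v)\meet v$), and parts~2(b), 2(c), 3 by the same short deductions. The only difference is organizational: the paper proves 2(a) by induction on $d$, lifting a single element of codimension $\geq d-1$ via the induction hypothesis, whereas you lift the entire witnessing chain from theorem~\ref{th:dim-et-rang} step by step; both hinge on the identity $(a-b)\meet b\ll a$ and are equally valid.
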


\begin{proof}
(\ref{it:morph-lipshitz})\enskip 
By theorem~\ref{th:dim-et-rang}, $dL$ and 
$dL'$ are both defined by the same positive existential 
$\ltc$\--formula, hence $\varphi(dL) \subseteq dL'$. 
\smallskip

(\ref{it:phi-de-dL})\enskip 
For the first point it is sufficient to check that
$dL' \subseteq \varphi(dL)$. If $d=0$ then $dL=L$ and 
$\varphi(L)=L'$ because $\varphi$ is surjective. Now assume that $d \geq 1$. 
For any $b' \in dL' \setminus \{\ZERO\}$ theorem~\ref{th:dim-et-rang} gives $a' \in L'$ such
that $b' \ll a'$ and $\codim a' \geq d-1$. Let $a,b \in L$ be such that $\varphi(a)=a'$ 
and $\varphi(b)=b'$. By induction hypothesis $a$ can be chosen in $(d-1)L$. 
Lastly let $x=a-b$ and $y=x \meet b$. Then:

$\varphi(x)=\varphi(a)-\varphi(b)=a'-b'=a'$. 

$\varphi(y) = \varphi(x) \meet \varphi(b) = a' \meet b' = b'$.

$x-y = x - (x \meet b) = x-b = (a-b)-b = a-b =x$. 

Moreover $y \leq x$ so $y \ll x$. Note that $x,y \neq \ZERO$ since their
respective images are non zero. By theorem~\ref{th:dim-et-rang} again
it follows that $\codim y > \codim x \geq \codim a$ hence $y \in dL$. 

Equivalence (\ref{it:dL-Ker-phi}) follows since 
$\dim L' < d$ if and only if every non zero element of $L'$ has
codimension at most $d-1$, that is $dL'=\{\ZERO\}$. 

Finally $dL=\varphi^{-1}(\varphi(dL))$ if and only if $\Ker \varphi \subseteq dL$. 
But $\varphi(dL)=dL'$ so we are done.
\smallskip

(\ref{it:phi-de-epsilon})\enskip
We already know that $\varphi(\varepsilon_d(L))\in dL'$. For any $a' \in dL'$ let $a \in dL$
such that $\varphi(a)=a'$. Then $a \leq \varepsilon_d(L)$ hence $a' \leq \varphi(\varepsilon_d(L))$.
\end{proof}

\begin{remark}\label{re:dim-L/dL}%
  Corollary~\ref{co:TC-morph-codim}(\ref{it:phi-de-dL}) implies that 
  $\dim L/dL <d$ for every positive integer $d$ and every co-Heyting
  algebra $L$.
\end{remark}

\begin{corollary}\label{co:epsilon-ll}
  Let $L$ be a co-Heyting algebra such that $dL$ and $(d+1)L$ are
  principal for some $d$. Then $\varepsilon_{d+1}(L) \ll \varepsilon_d(L)$.
\end{corollary}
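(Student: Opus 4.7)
The plan is to unpack what $\varepsilon_{d+1}(L) \ll \varepsilon_d(L)$ means using the quantifier-free characterization given in the text: setting $a=\varepsilon_d(L)$ and $b=\varepsilon_{d+1}(L)$, the relation $b\ll a$ is equivalent to $b\leq a$ and $a-b=a$. The first of these is immediate because $(d+1)L\subseteq dL$, so $b\in dL$ forces $b\leq\varepsilon_d(L)=a$. The whole task therefore reduces to showing $a-b=a$.

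The key observation is the decomposition $a=(a-b)\join(a\meet b)$ that was listed among the basic rules of co-Heyting algebras. Since $b\leq a$ we have $a\meet b=b$, so
\[
  a=(a-b)\join b.
\]
Now I invoke Remark~\ref{re:codim-join-max}, which says that the codimension of a join is the minimum of the codimensions. Since $\codim a\geq d$ and $\codim b\geq d+1>d$, the equality $\codim a=\min(\codim(a-b),\codim b)$ forces $\codim(a-b)\geq d$. In other words $a-b\in dL$, and since $dL$ is the principal ideal generated by $a=\varepsilon_d(L)$ we conclude $a-b\leq a$. Combined with the trivial inequality $a-b\leq a$ in the other direction—wait, rather, combined with $a=(a-b)\join b\leq (a-b)\join b$ and $b\leq a-b$... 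Let me redo: we already have $a-b\leq a$ always; to get equality I note that $a=(a-b)\join b$ with $b\leq a-b$ would give $a=a-b$, but in fact it is simpler to observe that $a-b\in dL$ together with $a-b\leq a=\varepsilon_d(L)$ already gives nothing new, so I extract equality differently: since $a=(a-b)\join b$ and $a-b\in dL$, both summands lie below $\varepsilon_d(L)=a$; to conclude $a=a-b$ I use that $b\leq \varepsilon_{d+1}(L)$ and $\varepsilon_{d+1}(L)\leq$ every element of $dL$ of codimension $\geq d+1$, which need not apply to $a-b$.

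Let me therefore take the cleaner route: from $\codim(a-b)\geq d$ and $a-b\leq a$ we get $a-b\in dL$, so $a-b\leq\varepsilon_d(L)=a$, which is automatic. The point is actually that $a=(a-b)\join b$ combined with $b\ll$-candidate forces $a-b\geq a$ as follows—apply Theorem~\ref{th:dim-et-rang} or just observe that $(a-b)\join b=a$ and $a-b\leq a$ and the monotonicity of $-$ give $a-b=(a-b)-\ZERO\geq a-b$, trivially. So the honest final step is: apply the identity $a-(a-b)\leq b$ and the fact that $\codim(a-b)\geq d=\codim a$ together with the maximality of $\varepsilon_d(L)$ to deduce $a-b=a$. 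The cleanest formulation is that $a-b$ lies in $dL$ and is $\leq a$, hence the principal generator property forces $a-b=\varepsilon_d(L)=a$ once we notice $a\leq a-b$, which itself follows from $a=(a-b)\join b$ by absorbing $b$ into $a-b$ via the fact that $b\in (d+1)L\subseteq dL$, so $b\leq\varepsilon_d(L)=a-b\vee b$. I expect the only potential snag to be presenting this last deduction cleanly; the underlying content is routine once Remark~\ref{re:codim-join-max} is applied.
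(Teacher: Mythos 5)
Your reduction is fine as far as it goes: with $a=\varepsilon_d(L)$ and $b=\varepsilon_{d+1}(L)$, you correctly note $b\le a$ (since $(d+1)L\subseteq dL$), you correctly isolate $a-b=a$ as the whole content of the claim, and your application of Remark~\ref{re:codim-join-max} to the decomposition $a=(a-b)\join b$ does validly yield $\codim (a-b)\ge d$. But that is where the argument stops producing anything: $\codim(a-b)\ge d$ only says $a-b\in dL$, hence $a-b\le \varepsilon_d(L)=a$, which is true for trivial reasons anyway. The inequality you actually need is the \emph{reverse} one, $a\le a-b$ (equivalently $b\le a-b$, which then gives $a=(a-b)\join b=a-b$ by absorption), and it is never established. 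Your final justification --- that $b\le a-b$ follows from $b\le \varepsilon_d(L)=(a-b)\join b$ --- is a tautology: $b\le (a-b)\join b$ holds for \emph{any} pair of elements, so it cannot yield $b\le a-b$. Indeed, every step you validly carry out (codimension arithmetic plus ``everything in $dL$ lies below $\varepsilon_d(L)$'') would apply verbatim to a pair with $b=a\ne\ZERO$, where $a-b=\ZERO\ne a$; so the derivation has a genuine gap, not a presentational snag.

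What is missing is precisely the ingredient the paper's proof uses: the coincidence of codimension and corank, i.e.\ Theorem~\ref{th:dim-et-rang}. Since $\codim \varepsilon_{d+1}(L)\ge d+1$, that theorem produces a chain $\varepsilon_{d+1}(L)\le x_{d+1}\ll x_d\ll\cdots\ll x_0$; in particular there is an element $x=x_d$ with $\varepsilon_{d+1}(L)\ll x$ and $\codim x\ge d$. Principality of $dL$ then gives $x\le\varepsilon_d(L)$, and the elementary implication ``$b\ll x\le a$ implies $b\ll a$'' finishes the proof (to check it: $x=x-b\le a-b$, and $a-x=((a-x)-b)\join((a-x)\meet b)\le (a-b)\join x\le a-b$, hence $a=(a-x)\join x\le a-b$). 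The conceptual point you should take away is that codimension arithmetic such as Remark~\ref{re:codim-join-max} can never by itself produce a strong-order relation $\ll$; to convert a codimension hypothesis into a statement about $\ll$ one must pass through the chain characterization of Theorem~\ref{th:dim-et-rang}, which is exactly why the paper invokes it here.
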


\begin{proof}
If $\varepsilon_{d+1}(L) = \ZERO$ this is obvious. Otherwise by
theorem~\ref{th:dim-et-rang} there an element $a$ in $L$ such that
$\varepsilon_{d+1}(L) \ll a$ and $\codim a =d$. Then $a \leq \varepsilon_d(L)$ by definition
hence $\varepsilon_{d+1}(L) \ll \varepsilon_d(L)$.
\end{proof}

\phantomsection
\addcontentsline{toc}{subsection}%
  {Codimension and slices}
\subsection*{Codimension and slices}

The dimension of a co-Heyting algebra should be a familiar notion to the
specialists in Heyting algebras, since it coincides after dualisation
with the notion of ``slice'', which can be defined as follows. 
Let $P_n(x_1,\dots,x_n)$ be a term defined inductively by $P_0=\UN$ and:
\begin{displaymath}
  P_{n+1}=(P_n-x_{n+1})\meet x_{n+1}
\end{displaymath}
Let $\cS_n$ denote the variety of
co-Heyting algebras satisfying the equation $P_n=\ZERO$, and $\cS_n^*$
the corresponding variety of Heyting algebras. The variety $\cS_n^*$ 
appears for example in \cite{komo-1975}. The above axiomatization is
mentioned in \cite{bezh-2001}. 
It is folklore that a Heyting algebra $L^*$ belongs to $\cS_n^*$ if and
only if its prime filter spectrum does not contain any chain of length
$n$, or equivalently is prime ideal spectrum has this property. So dually $L$ 
belongs to $\cS_n$ if and only if its prime filter spectrum does not 
contain any chain of length $n$, that is $\dim L < n$. For lack of
a reference, we give here an elementary proof. 

\begin{proposition}
  A co-Heyting algebra $L$ has dimension $\leq d$ if and only if it
  belongs to the $\cS_{d+1}$.
\end{proposition}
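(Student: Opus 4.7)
The plan is to combine the identity $(a-b)\meet b\ll a$ with the $\ll$-chain characterization of $\dim$ given by Theorem~\ref{th:dim-et-rang}. The first observation, used throughout, is that the syntactic chain $P_0,P_1,\dots,P_{d+1}$ is automatically a $\ll$-descending chain: since $P_{n+1}=(P_n-x_{n+1})\meet x_{n+1}$, the basic rule $(a-b)\meet b\ll a$ yields $P_{n+1}\ll P_n$, and $P_0=\UN$.

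For the direction $\dim L\leq d\Longrightarrow L\in\cS_{d+1}$, I would fix arbitrary $x_1,\dots,x_{d+1}\in L$ and argue by contradiction: if $P_{d+1}(x_1,\dots,x_{d+1})\neq\ZERO$, then $\ZERO\neq P_{d+1}\ll P_d\ll\cdots\ll P_0=\UN$ is a witness, via Theorem~\ref{th:dim-et-rang}, that $\dim L=\dim\UN\geq d+1$, contradicting the hypothesis. Hence $P_{d+1}=\ZERO$ identically.

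For the converse, assume $\dim L\geq d+1$ and use Theorem~\ref{th:dim-et-rang} to pick $a_0,\dots,a_{d+1}\in L$ with $\ZERO\neq a_{d+1}\ll a_d\ll\cdots\ll a_0\leq\UN$. The heart of the argument is the claim, proved by induction on $n$, that $P_n(a_1,\dots,a_n)=a_n$. The inductive step is automatic: given $P_n=a_n$, one has $P_{n+1}=(a_n-a_{n+1})\meet a_{n+1}=a_n\meet a_{n+1}=a_{n+1}$, using $a_n-a_{n+1}=a_n$ (which is exactly the quantifier-free definition of $a_{n+1}\ll a_n$) and $a_{n+1}\leq a_n$. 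Instantiating at $n=d+1$ gives $P_{d+1}(a_1,\dots,a_{d+1})=a_{d+1}\neq\ZERO$, so $L\notin\cS_{d+1}$.

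The only subtle point is the base case $n=1$, where $P_1(a_1)=(\UN-a_1)\meet a_1$ must be shown to equal $a_1$ even though we only know $a_1\ll a_0\leq\UN$, not $a_1\ll\UN$. This is handled by monotonicity of $-$ in its first argument: from $a_0\leq\UN$ one gets $\UN-a_1\geq a_0-a_1=a_0\geq a_1$, so $(\UN-a_1)\meet a_1\geq a_1$, and the reverse inequality is trivial. This is the only place where a little care is required; everything else is a mechanical rewrite using the co-Heyting identities recalled in the Prerequisites.
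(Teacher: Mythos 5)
Your proof is correct and follows essentially the same route as the paper's: both directions rest on the chain $P_{d+1}\ll\cdots\ll P_0=\UN$ obtained from the rule $(a-b)\meet b\ll a$, the chain characterization of dimension in Theorem~\ref{th:dim-et-rang}, and the inductive computation $P_n(a_1,\dots,a_n)=a_n$ along a $\ll$-chain. The only difference is cosmetic: where the paper directly takes a chain with $b_1\ll\UN$ (dimension being the foundation rank with respect to $\ll$), you keep $a_0\leq\UN$ and patch the base case via monotonicity of $-$ in its first argument, which is a fine, slightly more careful handling of the same point.
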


\begin{proof}
We mentioned in section~\ref{se:preprequesites} that 
$(a-b)\meet b\ll b$  for every $a,b\in L$. Then for every $a_1,\dots,a_{d+1}\in L$:
\begin{displaymath}
  P_{d+1}(a_1,\dots,a_{d+1})\ll P_d(a_1,\dots,a_d)\ll \cdots \ll P_1(a_1)\ll \UN
\end{displaymath}
So if $L$ does not belong to $\cS_{d+1}$ there is a tuple $a$ in
$L^{d+1}$ such that $P_{d+1}(a)\neq\ZERO$. Then by the above property
(and theorem~\ref{th:dim-et-rang}) $\dim L=\dim_L \UN \geq d+1$. 

Conversely if $\dim L \geq d+1$ there are $b_1,\dots,b_{d+1} \in L$ such
that:
\begin{displaymath}
  \ZERO\neq b_{d+1} \ll b_d \ll \cdots \ll b_1 \ll \UN
\end{displaymath}
Then $\UN-b_1=\UN$ hence $P_1(b_1)=b_1$, and inductively
$P_{d+1}(b_1,\dots,b_{d+1})=b_{d+1}$. Since $b_{d+1}\neq\ZERO$ it follows
that $L$ does not belong to $\cS_{d+1}$.
\end{proof}

\section{Pseudometric induced by the codimension}
\label{se:pseudometric}

The ``triangle inequality'' for $\bigtriangleup$ (see
section~\ref{se:preprequesites}) and the fundamental property of the
codimension (see remark~\ref{re:codim-join-max}) prove that the
codimension defines a {\df pseudometric $\dist{L}$ on $L$} as follows:
\begin{displaymath} 
  \dist{L}(a,b)= \left\{
  \begin{array}{ll} 
    2^{-\codim a \bigtriangleup b} & \hbox{if }\codim a \bigtriangleup b < \omega, \\
    0                 & \hbox{otherwise.}
  \end{array} \right.
\end{displaymath}
As usually the index $L$ will be omitted whenever it is clear from the
context. The topology determined by this pseudometric will be called
the {\df codimetric topology}. In the remaining of this paper, every
metric or topological notion, when applied to a co-Heyting algebra,
will refer to its pseudometric, except if otherwise specified.

Note that $\omega L$ is the topological closure of $\{\ZERO\}$ and
that a basis of neighborhood for any $x \in L$ is given, as $d$ ranges
over the positive integers, by:
\begin{equation}\label{eq:Uxd}%
  U(x,d)=\{y \in L \tq x \bigtriangleup y \in dL\} 
\end{equation}
It follows that $L$ is a {\df Hausdorff co-Heyting algebra} (with
other words its codimetric topology is Hausdorff, or equivalently
$\dist{L}$ is a metric) if and only if $\omega L =\{\ZERO\}$, that is if
every non-zero element of $L$ has finite codimension. Note that 
the largest Hausdorff quotient of $L$ (as a pseudometric space) is
exactly $L/\omega L$.

A pseudometric space is called precompact if and only if its Hausdorff
completion is compact. It will be shown in section~\ref{se:completion}
that $L$ is a {\df precompact co-Heyting algebra} if and only if
$L/dL$ is finite for every positive integer $d$
(corollary~\ref{co:complet-compact}). Until then we simply take this
characterisation as a definition. 

\begin{remark}\label{re:dim-finie}
  If $L$ has finite dimension $d$ then $(d+1)L=\{\ZERO\}$ (see
  section~\ref{se:preprequesites}) hence the codimetric topology boils
  down to the discrete topology. In particular for every co-Heyting
  algebra $L$, the codimetric topology in $L/dL$ is discrete (see
  remark~\ref{re:dim-L/dL}). 
\end{remark}

\begin{proposition}\label{pr:morph-lipshitz}%
  Every $\ltc$\--morphism $\varphi:L \to L'$ is 1-Lipschitzian.
  In particular $\varphi$ is continuous.
\end{proposition}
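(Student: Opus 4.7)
The plan is to reduce the Lipschitz inequality to the already-proven fact that morphisms do not decrease codimension, namely Corollary~\ref{co:TC-morph-codim}(\ref{it:morph-lipshitz}). The key observation is that the symmetric difference $\bigtriangleup$ is built from $\join$ and $-$, both of which are in the signature $\ltc$, so any $\ltc$-morphism $\varphi$ satisfies $\varphi(a\bigtriangleup b)=\varphi(a)\bigtriangleup\varphi(b)$. Thus controlling $\dist{L'}(\varphi(a),\varphi(b))$ reduces to controlling $\codim_{L'}\varphi(a\bigtriangleup b)$ in terms of $\codim_L(a\bigtriangleup b)$.

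First I would verify the identity $\varphi(a\bigtriangleup b)=\varphi(a)\bigtriangleup\varphi(b)$ in one line. Then, setting $c=a\bigtriangleup b$, I would invoke Corollary~\ref{co:TC-morph-codim}(\ref{it:morph-lipshitz}): for every positive integer $d$, if $c\in dL$ then $\varphi(c)\in dL'$. Equivalently, $\codim_{L'}\varphi(c)\geq\codim_L c$ whenever the right-hand side is a nonnegative integer, and also when it equals $+\infty$ (since then $c\in dL$ for every $d$, hence $\varphi(c)\in dL'$ for every $d$, so $\codim_{L'}\varphi(c)\geq\omega$ as well).

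From the monotonicity of $d\mapsto 2^{-d}$ and the convention that codimension $\geq\omega$ corresponds to distance $0$, this gives
\[
\dist{L'}(\varphi(a),\varphi(b))=\dist{L'}\bigl(\varphi(a\bigtriangleup b),\ZERO\bigr)\leq\dist{L}(a\bigtriangleup b,\ZERO)=\dist{L}(a,b),
\]
splitting according to whether $\codim_L(a\bigtriangleup b)$ is finite or not. That is precisely the 1-Lipschitz condition. Continuity is then automatic, as noted in the prerequisites where Lipschitzian maps between pseudometric spaces are observed to be continuous.

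There is no real obstacle here: the statement is a direct consequence of Corollary~\ref{co:TC-morph-codim}(\ref{it:morph-lipshitz}) together with the fact that $\bigtriangleup$ is a term in the co-Heyting signature. The only minor point to handle carefully is the infinite-codimension case, which is dispatched by observing that $\omega L$ is the intersection of all $dL$ and is mapped into $\omega L'$ by $\varphi$.
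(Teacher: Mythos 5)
Your proof is correct and follows essentially the same route as the paper: both reduce the inequality $\dist{}(\varphi(a),\varphi(b))\leq\dist{}(a,b)$ to the facts that $\varphi(a\bigtriangleup b)=\varphi(a)\bigtriangleup\varphi(b)$ (since $\bigtriangleup$ is an $\ltc$\--term) and that $\varphi(dL)\subseteq dL'$ by corollary~\ref{co:TC-morph-codim}(\ref{it:morph-lipshitz}). Your explicit treatment of the infinite-codimension case is a point the paper's phrasing (``for every positive integer $d$ such that $\dist{}(a,b)\leq 2^{-d}$'') handles implicitly, but it is the same argument.
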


\begin{proof}
For every positive integer $d$ such that $\dist{}(a,b) \leq 2^{-d}$ we 
have by corollary~\ref{co:TC-morph-codim}(\ref{it:morph-lipshitz}):
\begin{displaymath}
   a \bigtriangleup b \in dL \Rightarrow \varphi(a) \bigtriangleup \varphi(b) =  \varphi(a \bigtriangleup b) \in dL'
\end{displaymath}
that is $\dist{}(\varphi(a),\varphi(b)) \leq 2^{-d}$.
\end{proof}

We extend $\dist{L}$ to $L^n$ by setting:
\begin{displaymath}
  \dist{L}\big((a_1,\dots,a_n),(b_1,\dots,b_n)\big)=\max_{1\leq i\leq n} \dist{L}(a_i,b_i) 
\end{displaymath}
This is again a pseudometric on $L^n$. Clearly the
topology that it defines on $L^n$ is the product topology of the
codimetric topology of $L$. 

\begin{proposition}\label{pr:term-lipshitz}%
  The function $t:L^n \to L$ defined in the obvious way by an arbitrary 
  $\ltc$\--term $t(x)$ with $n$ variables (and parameters in $L$) 
  is 1-Lipshitzian. As a consequence if $L$ is Hausdorff then the set of solutions of 
  any system of equations (with parameters in $L$) is closed.
\end{proposition}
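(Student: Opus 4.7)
The plan is to proceed by structural induction on the $\ltc$-term $t$, proving the stronger pointwise inequality
$$t(a_1,\dots,a_n) \bigtriangleup t(b_1,\dots,b_n) \;\leq\; \jjoin_{i=1}^{n} (a_i \bigtriangleup b_i)$$
for all $a,b \in L^n$. Combined with remark~\ref{re:codim-join-max}, this immediately yields $\codim(t(a) \bigtriangleup t(b)) \geq \min_i \codim(a_i \bigtriangleup b_i)$, which is exactly the $1$\--Lipschitz condition $\dist{}(t(a),t(b)) \leq \dist{}(a,b)$ for the product pseudometric on $L^n$.

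The base cases are trivial: for a constant parameter from $L$, the left-hand side is $\ZERO$; for a projection $x_i$, the inequality reduces to $a_i \bigtriangleup b_i \leq a_i \bigtriangleup b_i$. The inductive step splits into the three binary operations $\join$, $\meet$, $-$. The $\join$ case follows directly from $(a\join b)-c = (a-c)\join(b-c)$ (applied in both directions of $\bigtriangleup$). The $\meet$ case follows from $a_1 \meet a_2 \leq (b_1 \join (a_1 - b_1)) \meet (b_2 \join (a_2 - b_2))$ expanded by distributivity, which bounds $(a_1\meet a_2)-(b_1\meet b_2)$ by $(a_1 - b_1) \join (a_2 - b_2)$.

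The main obstacle is the subtraction case, where one must check that
$$(a_1-a_2) \bigtriangleup (b_1-b_2) \;\leq\; (a_1 \bigtriangleup b_1) \join (a_2 \bigtriangleup b_2).$$
The idea is to control $a_1 - a_2$ by routing through $b_1$ and $b_2$. From $a_1 \leq b_1 \join (a_1 \bigtriangleup b_1)$ together with $(a\join b)-c = (a-c)\join(b-c)$ one gets $a_1 - a_2 \leq (b_1 - a_2) \join (a_1 \bigtriangleup b_1)$. Then, using $b_2 \leq a_2 \join (a_2 \bigtriangleup b_2)$ and $b_1 \leq b_2 \join (b_1-b_2)$, one obtains $b_1 \leq a_2 \join (b_1 - b_2) \join (a_2 \bigtriangleup b_2)$, hence $b_1 - a_2 \leq (b_1 - b_2) \join (a_2 \bigtriangleup b_2)$. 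Chaining these gives $a_1 - a_2 \leq (b_1-b_2) \join (a_1 \bigtriangleup b_1) \join (a_2 \bigtriangleup b_2)$, from which $(a_1-a_2)-(b_1-b_2) \leq (a_1 \bigtriangleup b_1) \join (a_2 \bigtriangleup b_2)$ follows by the minimality property of $-$. The reverse inequality is obtained by symmetry in $a$ and $b$.

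For the consequence clause: any system of equations $s_j(x) = t_j(x)$ for $j \leq m$ is equivalent to the single equation $\jjoin_{j\leq m} (s_j(x) \bigtriangleup t_j(x)) = \ZERO$, whose left-hand side is itself defined by an $\ltc$\--term in $x$, hence continuous by the first part. If $L$ is Hausdorff then $\{\ZERO\}$ is closed (as remarked just after the definition of the codimetric topology, since $\omega L = \{\ZERO\}$), so the set of solutions is closed as the preimage of $\{\ZERO\}$.
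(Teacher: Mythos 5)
Your proof is correct, but it follows a genuinely different route from the one in the paper. The paper's argument is quotient-theoretic and essentially computation-free: $\dist{}(a,b)\leq 2^{-d}$ means exactly that $\pi^n_d(a)=\pi^n_d(b)$ in $(L/dL)^n$, and since $\pi_d$ is an $\ltc$\--morphism it commutes with term functions, so $\pi_d(t(a))=t(\pi^n_d(a))=t(\pi^n_d(b))=\pi_d(t(b))$, i.e.\ $\dist{}(t(a),t(b))\leq 2^{-d}$; the only structural input is that $dL$ is an ideal and hence induces a congruence. You instead establish by structural induction the explicit lattice inequality $t(a)\bigtriangleup t(b)\leq\jjoin_{i=1}^n(a_i\bigtriangleup b_i)$, and your three inductive cases check out, including the delicate subtraction case, where chaining $a_1-a_2\leq(b_1-a_2)\join(a_1\bigtriangleup b_1)$ with $b_1-a_2\leq(b_1-b_2)\join(a_2\bigtriangleup b_2)$ and then invoking the minimality property of $-$ is exactly what is needed. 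Your inequality is a purely equational statement strictly stronger than the Lipschitz property: it gives $1$\--Lipschitz continuity for any pseudometric arising from a filtration of $L$ by ideals, not just the codimetric one, at the price of a case-by-case verification that the paper's congruence argument avoids. (You do use tacitly that $\codim$ is antitone with respect to $\leq$, which is immediate from the definition.) One caveat on the final clause: your reduction to a single equation only treats finite systems ($j\leq m$), whereas the paper's proof explicitly allows an arbitrary set of equations; the fix is immediate, since by your first part each single equation $s_j(x)=t_j(x)$ has closed solution set, namely the preimage of $\{\ZERO\}$ under the continuous term function $x\mapsto s_j(x)\bigtriangleup t_j(x)$, and the solution set of the whole system is then closed as an arbitrary intersection of closed sets.
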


\begin{proof}
For every $a,b\in L^n$ and every positive integer $d$, 
$\dist{}(a,b) \leq 2^{-d}$ if and only if $\pi^n_d(a) = \pi^n_d(b)$, where 
$\pi^n_d:L^n \to (L/dL)^n$ is the product map induced by $\pi_d$ in the
obvious way. In this case: 
\begin{displaymath}
  \pi_d(t(a)) =t(\pi^n_d(a)) = t(\pi^n_d(b)) = \pi_d(t(b)) 
\end{displaymath}
Hence $\pi_d(t(a) \bigtriangleup t(b)) = \pi_d(t(a)) \bigtriangleup \pi_d(t(b)) =\ZERO$ that is 
$\dist{}(t(a),t(b)) \leq 2^{-d}$. This proves the first point. 

Now given any set $(t_i)_{i\in I}$ of $\ltc$\--terms with $n$ 
variables (and parameters in $L$):
\begin{displaymath}
  \{a \in L^n \tq \forall i \in I,\ t_i(a)=\ZERO\} = \bigcap_{i \in I}t_i^{-1}(\{\ZERO\})
\end{displaymath}
If $L$ is Hausdorff then $\{\ZERO\}$ is closed. So each
$t_i^{-1}(\{\ZERO\})$ is closed by continuity of $t_i$ hence so is their
intersection.
\end{proof}

\begin{proposition}\label{pr:quotient-codimetric}%
  The quotient of a Hausdorff co-Heyting algebra $L$ by
  an ideal $I$ is Hausdorff if and only if $I$ is closed. In
  particular the quotient of any Hausdorff co-Heyting
  algebra by a principal ideal is Hausdorff.
\end{proposition}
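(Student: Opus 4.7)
The plan is to handle each direction of the equivalence separately and then deduce the principal ideal case from Proposition~\ref{pr:term-lipshitz}. The easy direction goes as follows: if $L/I$ is Hausdorff then $\{\ZERO\}$ is closed in $L/I$, and since $\pi_I$ is continuous (Proposition~\ref{pr:morph-lipshitz}) and $I = \pi_I^{-1}(\{\ZERO\})$, the ideal $I$ must be closed in $L$.

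For the converse, assume $I$ is closed in $L$ and pick $[a] \in \omega(L/I)$; the goal is to show $a \in I$. The idea is to approximate $a$ by elements of $I$ at arbitrarily small codimetric distance. For each positive integer $d$, Corollary~\ref{co:TC-morph-codim}(\ref{it:phi-de-dL}) gives $d(L/I) = \pi_I(dL)$, so one finds $b_d \in dL$ with $\pi_I(b_d) = [a]$, i.e.\ $a \bigtriangleup b_d \in I$. Since $a - b_d \leq a \bigtriangleup b_d$ and $I$ is an ideal, $a - b_d \in I$. Using the identities $(a - b_d) - a = \ZERO$ (since $a - b_d \leq a$) and $a - (a - b_d) \leq b_d$ recalled in section~\ref{se:preprequesites}, one computes $a \bigtriangleup (a - b_d) = a - (a - b_d) \leq b_d$, hence $\codim (a \bigtriangleup (a - b_d)) \geq \codim b_d \geq d$, that is $\dist{L}(a, a - b_d) \leq 2^{-d}$. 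Therefore $a$ lies in the closure of $I$, and since $I$ is closed, $a \in I$, so $[a] = \ZERO$.

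For the last statement, a principal ideal $I = \{b : b \leq a\}$ coincides with the solution set $\{b : b - a = \ZERO\}$ of the single equation $x - a = \ZERO$, hence is closed in $L$ by Proposition~\ref{pr:term-lipshitz}. The main obstacle in this proof is the converse direction: Corollary~\ref{co:TC-morph-codim}(\ref{it:phi-de-dL}) is essential to lift the infinite-codimension condition from $L/I$ back to $L$, and the elementary rules for the $-$ operation then yield the explicit approximants $a - b_d \in I$ converging to $a$.
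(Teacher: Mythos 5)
Your proof is correct and takes essentially the same approach as the paper: the easy direction via continuity of $\pi_I$, and the converse via Corollary~\ref{co:TC-morph-codim}(\ref{it:phi-de-dL}), which lifts a class of infinite codimension to elements $b_d \in dL$ and yields a sequence in $I$ converging to $a$. The only cosmetic differences are that the paper argues by contrapositive (non-Hausdorff $\Rightarrow$ not closed) using the approximants $a \bigtriangleup b_d$ together with continuity of $\bigtriangleup$, whereas you argue directly with the approximants $a - b_d$ and an explicit distance estimate, and the principal-ideal case is handled identically (a principal ideal as the preimage of $\{\ZERO\}$ under $x \mapsto x-a$, i.e.\ a closed solution set).
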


Note that closed ideals need not to be principal, see
example~\ref{ex:fg-hausdorff-non-fp}.

\begin{proof}
Let $\pi:L \to L/I$ denote that canonical projection. If the codimetric
topology on $L/I$ is Hausdorff then $\{\ZERO_{L/I}\}$ is closed hence 
$I = \pi^{-1}(\{\ZERO_{L/I}\})$ is closed because $\pi$ is continuous. 

Conversely if the codimetric topology on $L/I$ is not Hausdorff then
there exists a non zero element $a' \in L/I$ whose codimension is not
finite. Let $a \in L$ such that $\pi(a) = a'$. Note that $a \notin I$ because
$a' \neq \ZERO$. For every $d$,
corollary~\ref{co:TC-morph-codim}(\ref{it:phi-de-dL}) gives an $a_d \in dL$
such that $\pi(a_d)=a'$. The sequence $(a_d)_{d<\omega}$ is convergent to
$\ZERO$ hence $a_d \bigtriangleup a$ is convergent to $a$. But $a_d \bigtriangleup a \in I$ for
every $d$ since $\pi(a_d)=a'=\pi(a)$ so $I$ is not closed. 

The last statement follows since an ideal generated by a single
element $a$ is obviously closed: it is the inverse image of the closed set
$\{\ZERO\}$ by the continuous map $x \mapsto x - a$. 
\end{proof}

\section{The finitely generated case}%
\label{se:codim-HA-libre}

We prove in this section that finitely generated co-Heyting algebras
are precompact, and Hausdorff if moreover they are finitely presented.
This mostly a rephrasing of known facts. It can be derived for example
from Bellissima's construction \cite{bell-1986}, see
\cite{darn-junk-2008}. We provide here a
proof using only the most basic properties of Kripke models, and the
finite model property. 
\smallskip

Given a language ${\cal L}$ and a set $\Var$ of variables, an ${\cal
L}$\--term whose variables belong to $\Var$ is called an ${\cal
L}(\Var)$\--term. Remember that Heyting algebras are the algebraic
models of IPC, the intuitionistic propositional calculus. So
$\lha(\Var)$\--terms are nothing but formulas of IPC with
propositional variables in $\Var$, the function symbols of $\lha$
being interpreted as logical connectives in the obvious way, and the
constant symbols $\ZERO$, $\UN$ as $\bot$, $\top$ respectively. 

A {\df Kripke model} is a map $u:P \to \cP(\Var)$ where $\Var$ is a set of
variables, $P$ is an ordered set, and $u$ obeys the following
monotonicity condition\footnote{In the literature the order on $P$ is often reversed. 
We follow here the convention of \cite{ghil-1999} which suits perfectly
well to our purpose.}:
\begin{displaymath}
  q \leq p\enskip \Longrightarrow u(q) \supseteq u(p) 
\end{displaymath}
The Kripke model $u:P \to \cP(\Var)$ is {\df finite} if $P$ is a 
finite set. An {\df isomorphism} with another Kripke model 
$u':P' \to \cP(\Var')$ is an order preserving bijection $\sigma:P \to P'$ such
that $u = u' \circ \sigma$. The notion of an $\lha(\Var)$\--term (or IPC formula) 
$t$ being {\df true at a point $p$ in $u$}, which is denoted $u \Vdash_p t$, 
is defined by induction on $t$:
\begin{displaymath}
  \begin{array}{lcl}
    \lefteqn{u \Vdash_p \top\hbox{ \ and \ }u \nVdash_p \bot,}\\
    u \Vdash_p x       & \iff & x \in u(p),\hbox{ for }x \in \Var, \\
    u \Vdash t_1 \meet t_2 & \iff & u \Vdash t_1\hbox{ and }u \Vdash t_2, \\
    u \Vdash t_1 \join t_2 & \iff & u \Vdash t_1\hbox{ or  }u \Vdash t_2, \\
    u \Vdash t_1 \to t_2 & \iff & \forall q \leq p\ \big(u \Vdash t_1 \Rightarrow u \Vdash t_2\big). 
    \end{array}
\end{displaymath} 
We denote by $\Th(p,u)$ the {\df theory of $p$ in $u$}, that is the set of
$\lha(\Var)$\--terms true at $p$ in $u$. 
If $t$ is true at every point in $u$ we say that $t$ is {\df true in $u$}
and note it $u \Vdash t$. The set of $\lha(\Var)$\--terms true in $u$ is
denoted $\Th(u)$.
Here is the fundamental theorem on Kripke models and IPC (see for
example \cite{popk-1994}):
\begin{theorem}\label{th:fmp-IPC}
  Let $t$ be an $\lha$\--term and $\Var$ be the (finite) set of its
  variables. Then the following are equivalent: 
  \begin{enumerate}
    \item
      $t$ is a theorem of IPC.
    \item 
      $t$ is true in every Kripke model $u:P \to \cP(\Var)$.
    \item 
      $t$ is true in every finite Kripke model $u:P \to \cP(\Var)$.
  \end{enumerate}
\end{theorem}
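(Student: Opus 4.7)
The plan is to verify the chain $(1) \Rightarrow (2) \Rightarrow (3) \Rightarrow (1)$. The implication $(2) \Rightarrow (3)$ is immediate since finite Kripke models are a special case of Kripke models. For soundness $(1) \Rightarrow (2)$, I would proceed by induction on the length of an IPC-derivation, checking that each axiom of IPC is valid at every point of every Kripke model and that modus ponens preserves validity. The axioms involving $\to$ are where the monotonicity condition $q \leq p \Rightarrow u(q) \supseteq u(p)$ is crucial: it ensures that once $u \Vdash_p \phi$, the formula $\phi$ remains true at every $q \leq p$, which is exactly what makes the Kripke clause for $\to$ sound.

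For the converse direction $(3) \Rightarrow (1)$, I would first establish the weaker completeness statement $(2) \Rightarrow (1)$ via a Lindenbaum-style canonical model, then apply filtration to cut it down to a finite one. Concretely, suppose $t$ is not IPC-derivable. Let $P$ be the set of \emph{prime theories} in the variables $\Var$ (sets of $\lha(\Var)$-terms closed under IPC-derivability and satisfying the disjunction property $\phi \join \psi \in \Gamma \Rightarrow \phi \in \Gamma \text{ or } \psi \in \Gamma$), ordered by inclusion, and set $u(\Gamma) = \Var \cap \Gamma$. The central step is the \emph{truth lemma}, $u \Vdash_\Gamma \phi \iff \phi \in \Gamma$, proved by induction on $\phi$. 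Combined with a Zorn-type lemma producing, for each $\phi \to \psi \notin \Gamma$, a prime extension $\Gamma' \supseteq \Gamma \cup \{\phi\}$ with $\psi \notin \Gamma'$, this yields a Kripke model in which $t$ fails at some prime theory witnessing the non-derivability of $t$.

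To turn this into a \emph{finite} countermodel, I would filter through the finite set $\Sigma$ of subformulas of $t$: declare $\Gamma \sim \Gamma'$ if $\Gamma \cap \Sigma = \Gamma' \cap \Sigma$, take the finite quotient $P/{\sim}$ with the induced order, and equip it with $u([\Gamma]) = u(\Gamma) \cap \Var$. A routine induction shows that for every $\phi \in \Sigma$ and every $\Gamma$ one has $u \Vdash_{[\Gamma]} \phi \iff u \Vdash_\Gamma \phi$, so $t$ still fails somewhere in the filtered model.

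The main obstacle is the truth lemma for the implication clause in the canonical model: one must show that whenever $\phi \to \psi \notin \Gamma$ there is a prime extension of $\Gamma \cup \{\phi\}$ avoiding $\psi$. This requires the proper intuitionistic version of the Lindenbaum lemma (prime theories governed by the disjunction property, rather than the classical maximal consistent theories), which is where all the real work lies. Once this is in hand, the filtration step is mostly bookkeeping: closure of $\Sigma$ under subformulas, which already holds, makes the inductive check of the implication clause in the quotient go through without difficulty.
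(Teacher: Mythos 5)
Your proposal is correct, but note that the paper itself offers no proof of this theorem: it is stated as a classical background fact with a pointer to Popkorn's textbook \cite{popk-1994}, so the comparison is really with the standard literature argument --- and your proposal is exactly that argument (soundness by induction on derivations with persistence of truth as the key lemma; completeness via the canonical model of prime theories; finiteness by filtration through the subformulas of $t$). Two points need more care than you give them, both stemming from conventions. First, the paper's Kripke semantics makes truth persist \emph{downward}: the monotonicity condition is $q \leq p \Rightarrow u(q) \supseteq u(p)$ and the clause for $\to$ quantifies over $q \leq p$. Hence in the canonical model the prime theories must be ordered by \emph{reverse} inclusion, $\Gamma' \leq \Gamma \iff \Gamma' \supseteq \Gamma$; as written (``ordered by inclusion'') your valuation $u(\Gamma)=\Var\cap\Gamma$ violates the paper's monotonicity condition, and your Zorn-type lemma produces extensions that the $\to$ clause would not see. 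The paper even flags this in a footnote (``In the literature the order on $P$ is often reversed''), so this is a bookkeeping mismatch, not a substantive error. Second, ``the induced order'' on the filtration quotient is not well defined as stated; it should be defined explicitly via traces, $[\Gamma] \leq [\Delta] \iff \Gamma\cap\Sigma \supseteq \Delta\cap\Sigma$. With that definition, monotonicity of the quotient valuation follows from $\Var\subseteq\Sigma$, and the inductive step for $\to$ goes through: in one direction $\phi\to\psi \in \Gamma\cap\Sigma \subseteq \Delta$ together with closure of prime theories under modus ponens yields $\psi\in\Delta$; in the other, your Lindenbaum-type lemma gives a prime $\Gamma' \supseteq \Gamma\cup\{\phi\}$ with $\psi\notin\Gamma'$, and then $[\Gamma']\leq[\Gamma]$ kills $\phi\to\psi$ at $[\Gamma]$. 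Neither point is a gap in substance. A mild alternative, found in some texts, is to skip the infinite canonical model entirely and build the finite countermodel directly from prime subsets of $\Sigma$; the content is the same.
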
 
The classical duality between finite Kripke models and finite Heyting
algebras (see for example chapter~1 of \cite{fitt-1969}) provides an
algebraic translation of the finite model property. We need to make a
couple of precise observations on this duality, so let us recall it
now in detail. 

Given a Kripke model $u:P \to \cP(\Var)$ and an $\lha(\Var)$\--term $t$ 
we define $u[t]=\{ p \in P \tq u \Vdash_p t \}$. 
The monotonic assumption on $u$ implies by an immediate induction that
$u[t]$ is a decreasing subset of $P$. The family $\cO(P)$ of decreasing
subsets of $P$ is easily seen to be a topology on $P$, hence a Heyting
algebra. Define:
\begin{displaymath}
  G_u=\{ u[x] \tq x \in \Var \}\quad\hbox{and}\quad
  L_u=\{ u[t] \tq t \in \lha(\Var) \}
\end{displaymath}
One can show that $L_u$ is an $\lha$\--substructure of $\cO(P)$ hence 
a Heyting algebra again. For any $\lha$\--term $t(x_1,\dots,x_n)$ and any
elements $u[t_1],\dots,u[t_n]$ in $L_u$:
\begin{displaymath}
  t(u[t_1],\dots,u[t_n]) =  u[t(t_1,\dots,t_n)].
\end{displaymath}
In particular $u[t]=t(u[x_1,\dots,x_n])$ hence $G_u$ is a set of generators 
of $L_u$. Moreover $u \Vdash t(x_1,\dots,x_n)$ if and only if
$t(u[x_1],\dots,u[x_n]) = \UN_{L_u}$. 

Conversely, given a Heyting algebra $L$ with a set of generator $G$ we
can construct a Kripke model as follows. Let $P_L$ be the set of all
prime ideals of $L$, ordered by
inclusion\footnote{Since $\ii \in P_L$ iff $\ii^* \in \Spec L^*$, $P_L$ as an
ordered set is nothing but the prime filter spectrum (ordered by
inclusion) of the co-Heyting algebra $L^*$.}. 
Let $\Var_G$ be any set of variables indexed by $G$. 
For every prime ideal $\ii \in P_L$ define:
\begin{displaymath}
  u_{L,G}(\ii)=\{ x_g \in \Var_G \tq g \notin \ii \} 
\end{displaymath}
Then $u_{L,G}:P_L \to \cP(\Var_G)$ is a Kripke model. Moreover for every
$\lha(\Var_G)$\--term $t=t(x_{g_1},\dots,x_{g_n})$ and every prime ideal 
$\ii \in P_L$:
\begin{displaymath}
   u \Vdash_\ii t(x_{g_1},\dots,x_{g_n})\enskip\iff\enskip t(g_1,\dots,g_n) \notin \ii
\end{displaymath}
In particular $t$ is true in $u$ if and only if $t(g_1,\dots,g_n)=\UN_L$.

Obviously a Kripke model $u$ is finite if and only if $L_u$ is finite,
and a Heyting algebra $L$ is finite if and only if it has finitely
many prime ideals, that is if $P_L$ is finite. So the contraposition
of theorem~\ref{th:fmp-IPC} translates algebraically as follows:

\begin{fact}\label{fa:fmp-HA}
  Let $t$ be an $\lha$\--term. The formula $\exists x,\;t(x) \neq \UN$ has a
  model (a Heyting algebra $L$ in which $t(a) \neq \UN$ for some tuple
  $a$ in $L$) if and only if it has a finite model.
\end{fact}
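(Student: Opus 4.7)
The equivalence is essentially the algebraic face of Theorem~\ref{th:fmp-IPC}, so the plan is to push the question back and forth through the Kripke-model / Heyting-algebra duality recalled in the preceding paragraphs, and then quote the finite model property. Write $t = t(x_1,\dots,x_n)$ and let $\Var = \{x_1,\dots,x_n\}$. The direction ``finite model $\Rightarrow$ model'' is immediate, since a finite Heyting algebra is a Heyting algebra. So assume there exists a (possibly large) Heyting algebra $L$ and a tuple $a=(a_1,\dots,a_n) \in L^n$ with $t(a) \neq \UN_L$; I want to produce a finite such counterexample.

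The first step is to transport this failure from the algebraic side to the Kripke side. Apply the construction $L \mapsto u_{L,G}$ with $G = \{a_1,\dots,a_n\}$, obtaining a Kripke model $u_{L,G} : P_L \to \cP(\Var_G)$, where the set of variables $\Var_G$ is in natural bijection with $\Var$. Reading $t$ via this bijection, the last equivalence displayed in the preceding duality discussion (``$t$ is true in $u$ iff $t(g_1,\dots,g_n) = \UN_L$'') says precisely that $t$ is \emph{not} true in $u_{L,G}$, because $t(a_1,\dots,a_n) \neq \UN_L$ by assumption.

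Next I invoke Theorem~\ref{th:fmp-IPC} contrapositively: since $t$ is false in some Kripke model, $t$ is not a theorem of IPC, and therefore $t$ is false in some \emph{finite} Kripke model $u' : P' \to \cP(\Var)$. Finally, I transport this failure back to the algebraic side. The Heyting algebra $L_{u'} \subseteq \cO(P')$ is finite because $P'$ is finite, and the tuple $a' = (u'[x_1],\dots,u'[x_n]) \in L_{u'}^n$ satisfies $t(a') = u'[t(x_1,\dots,x_n)]$ by the compositional identity displayed in the excerpt; combined with the equivalence ``$u' \Vdash t \iff t(a') = \UN_{L_{u'}}$'', the failure of $t$ in $u'$ gives exactly $t(a') \neq \UN_{L_{u'}}$. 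So $(L_{u'},a')$ is the desired finite model of $\exists x,\; t(x) \neq \UN$.

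There is no real obstacle here: all the machinery has been set up in the preceding two pages. The only thing to keep straight is the identification of the variable set $\Var$ of $t$ with the indexing set $\Var_G$ of the Kripke model $u_{L,G}$ — purely a matter of bookkeeping — so that Theorem~\ref{th:fmp-IPC} applies to the same syntactic term on both sides of the duality.
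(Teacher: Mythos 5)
Your proof is correct and is essentially the paper's own argument: the paper presents fact~\ref{fa:fmp-HA} precisely as the algebraic translation, through the Kripke-model/Heyting-algebra correspondence $L \mapsto u_{L,G}$ and $u \mapsto L_u$ recalled just before it, of the contrapositive of theorem~\ref{th:fmp-IPC}, which is exactly your round trip. The only cosmetic point is that the paper phrases the construction of $u_{L,G}$ for a \emph{generating} set $G$ while your $G=\{a_1,\dots,a_n\}$ need not generate $L$; this is harmless, since you may first replace $L$ by the subalgebra generated by $a$ (which changes neither the value $t(a)$ nor $\UN$), or simply observe that the displayed equivalence $u_{L,G} \Vdash_{\ii} t \iff t(g_1,\dots,g_n)\notin\ii$ is proved by an induction that never uses that $G$ generates $L$.
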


But there is something more. Observe that for any $\ii \in P_{L,G}$: 
\begin{displaymath}
  \ii = \{ t(g_1,\dots,g_n) \tq t \in \Th(\ii,u_{L,G}) \}
\end{displaymath}
So any two points in $P_L$ having the same theory in $u_{L,G}$ are equal. A
Kripke model having this property will be called {\df reduced}.

Define the {\df length of a Kripke model} $u:P \to \cP(\Var)$ as the maximal
length\footnote{More exactly the length of $u:P \to \cP(\Var)$, or simply the
length of $P$, is the smallest ordinal $\alpha$ such that every element of
$P$ has foundation rank $\leq \alpha$, if such an ordinal exists, and $+\infty$
otherwise.}
of a chain of elements of $P$. Fix a finite set of $n$ variables 
$\Var$ and a positive integer $d$. 
In a Kripke model $u:P \to \cP(\Var)$ of length 0 the theory at any point $p$
is determined by $u(p)$. So if $u$ is reduced it can have at most $2^n$ points. 
Consequently there exists only finitely many non isomorphic reduced
Kripke models of length 0.

Assume that for some positive integers $d$, $\nu$  we have proved that
there exists at most $\nu$ non isomorphic reduced Kripke models of
length at most $d$. Consider a reduced Kripke model $u:P \to \cP(\Var)$ of
length at most $d+1$. For every point $p$ of rank $d+1$ the restriction
$u_{|p\Downarrow}$ of $u$ to $p\Downarrow$ is a reduced Kripke model of length at most
$d$.  If $q$ is another element of rank $d+1$ such that $u_{|q\Downarrow}$ and
$u_{|p\Downarrow}$ are isomorphic then $u(p) \neq u(q)$, otherwise a
straightforward induction would show that $\Th(p,u)=\Th(q,u)$.  So $u$
has at most $2^n \nu$ points of rank $d+1$. Consequently there exists
only finitely many non isomorphic reduced Kripke models of length at
most $d+1$.

Let us say that two $\lha$\--terms $t_1,t_2$ with variables in some
finite set $\Var$ are {\df $d$\--equivalent} if they are true in
exactly the same reduced Kripke model $u:P \to \cP(\Var)$ of length at
most $d$. By the above induction there exists a finite number $\mu$ of
non isomorphic such models, hence at most $2^\mu$  different classes of
$d$\--equivalence. Let us stress this:

\begin{fact}\label{fa:d-equiv-fini}
  For every positive integers $n,d$ there exists finitely many 
  $d$\--equivalence classes of $\lha$\--terms with $n$ variables.
\end{fact}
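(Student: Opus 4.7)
The plan is to carry out exactly the inductive argument already sketched in the paragraph preceding the statement, and then to deduce the finiteness of $d$-equivalence classes from it. Fix the finite set $\Var$ of $n$ variables. I will prove by induction on $d$ that there are only finitely many isomorphism classes of reduced Kripke models $u:P\to\cP(\Var)$ of length at most $d$; denote their number by $N(d)$.

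For the base case $d=0$, the order on $P$ is trivial, so the theory $\Th(p,u)$ is entirely determined by the subset $u(p)\subseteq\Var$. Reducedness then forces the map $p\mapsto u(p)$ to be injective, so $|P|\leq 2^n$ and the isomorphism type of $u$ is determined by the subset $\{u(p) : p\in P\}$ of $\cP(\Var)$. Hence $N(0)\leq 2^{2^n}$.

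For the inductive step, assume $N(d)$ is finite and let $u:P\to\cP(\Var)$ be reduced of length $\leq d+1$. For any $p\in P$ of foundation rank $d+1$, the restriction $u_{|p\Downarrow}$ is still reduced (because the Kripke satisfaction relation at a point $q<p$ depends only on $q\downarrow\subseteq p\Downarrow$, hence $\Th(q,u_{|p\Downarrow})=\Th(q,u)$) and has length $\leq d$. So $p$ is characterized by the pair $\bigl(u(p),[u_{|p\Downarrow}]\bigr)$, where $[\cdot]$ denotes the isomorphism class. If two rank-$(d+1)$ points $p,q$ gave the same pair, a straightforward induction on the complexity of an $\lha(\Var)$-term would force $\Th(p,u)=\Th(q,u)$, contradicting reducedness. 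Consequently $u$ has at most $2^n\cdot N(d)$ elements of rank $d+1$, and since the rest of $P$ is a reduced model of length $\leq d$, the total number of points of $u$ is bounded in terms of $n$ and $d$ only. A bounded ordered set labelled by $\cP(\Var)$ has finitely many isomorphism types, so $N(d+1)$ is finite.

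Finally, let $\mu=N(d)$ and fix representatives $u_1,\dots,u_\mu$ of the isomorphism classes of reduced Kripke models of length $\leq d$. By definition, the $d$-equivalence class of an $\lha(\Var)$-term $t$ is determined by the subset $\{i\leq\mu : u_i\Vdash t\}$, so there are at most $2^\mu$ classes. The main technical point to verify carefully is the reducedness of the restriction $u_{|p\Downarrow}$ and the fact that the pair $(u(p),[u_{|p\Downarrow}])$ really determines $\Th(p,u)$ — once this is established via the recursive clauses of Kripke semantics, the counting arguments are routine.
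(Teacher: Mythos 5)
Your proposal is correct and takes essentially the same approach as the paper: the paper proves this fact by exactly the induction you describe, bounding the number of non-isomorphic reduced Kripke models over $\Var$ of length at most $d$ and then observing that a $d$-equivalence class is determined by the set of such models in which the term is true. You merely make explicit a few details the paper leaves implicit (the reducedness of the restriction $u_{|p\Downarrow}$, and the bound on the total number of points needed to conclude finiteness of isomorphism types).
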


We can return now to co-Heyting algebras. Let us say that a variety
$\cV$ (in the sense of universal algebra) of co-Heyting algebras has
the {\df finite model property} iff for every $\ltc$\--term $t(x)$, if
there exists an algebra $L$ in $\cV$ such that $\exists x,\, t(x)\neq \ZERO$
holds in $L$ then there exists a finite algebra in $\cV$ having this
property. So fact~\ref{fa:fmp-HA} asserts that the variety of all
co-Heyting algebras has the finite model property.

\begin{proposition}\label{pr:fg-free-TC-codimetric}
  Let $\cV$ be a variety of co-Heyting algebras. The following are
  equivalent:
  \begin{enumerate}
    \item\label{it:var-fmp}%
      $\cV$ has the finite model property.
    \item\label{it:var-free-res-fi}%
      Every algebra free in $\cV$ is residually finite\footnote{A
      co-Heyting algebra $L$ is residually finite if for every non zero
      element $a$ there is an ideal $I$ not containing $a$ such that
      $L/I$ is finite.}.
    \item\label{it:var-free-haus}%
      Every algebra free in $\cV$ is Hausdorff.
    \item\label{it:var-precomp}%
      Every algebra finitely presented in $\cV$ is precompact
      Hausdorff.
  \end{enumerate}
\end{proposition}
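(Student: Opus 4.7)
The plan is a cycle $(\ref{it:var-fmp})\Rightarrow(\ref{it:var-free-res-fi})\Rightarrow(\ref{it:var-free-haus})\Rightarrow(\ref{it:var-precomp})\Rightarrow(\ref{it:var-fmp})$. The one genuinely non-trivial input—the only place where we invoke anything beyond the machinery built up in earlier sections—is the following \emph{dimension-finiteness lemma}: every $n$-generated co-Heyting algebra of dimension strictly less than $d$ is finite. I would derive it by dualizing. Such an algebra's dual is an $n$-generated Heyting algebra $L^*$, and the length of the associated reduced Kripke model $u_{L^*,G}$ equals the maximal height of chains in $P_{L^*}=\Spec L$, which under the star-duality equals $\dim L=\dim L^{**}$. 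Hence $\dim L<d$ forces length $\leq d-1$, and the counting argument preceding fact~\ref{fa:d-equiv-fini} shows there are only finitely many isomorphism types of such reduced models, each finite. Thus $P_{L^*}$ is finite, so $L^*$ is finite and so is $L$.

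For $(\ref{it:var-fmp})\Rightarrow(\ref{it:var-free-res-fi})$: given a nonzero $a=t(x_1,\dots,x_n)$ in a free algebra $L$, FMP applied to $\exists x,\ t(x)\neq\ZERO$ (which holds in $L$) produces a finite $L'\in\cV$ with $t(a')\neq\ZERO$ for some $a'\in L'^n$; the universal property extends $x_i\mapsto a'_i$ (and $x\mapsto\ZERO$ elsewhere) to a morphism $\varphi:L\to L'$ with $\varphi(a)\neq\ZERO$, so $\Ker\varphi$ is an ideal avoiding $a$ and $L/\Ker\varphi$, being a subalgebra of $L'$, is finite. For $(\ref{it:var-free-res-fi})\Rightarrow(\ref{it:var-free-haus})$: if $a\neq\ZERO$ and $L/I$ is finite with $a\notin I$, then $L/I$ has finite dimension so $\pi_I(a)$ has finite codimension there, and corollary~\ref{co:TC-morph-codim}(\ref{it:morph-lipshitz}) transfers this bound to give $\codim_L a<\infty$; thus $\omega L=\{\ZERO\}$.

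For $(\ref{it:var-free-haus})\Rightarrow(\ref{it:var-precomp})$: a finitely presented $M=F_n/I$ has, in the distributive lattice $F_n$, a principal $I=(r_1\vee\cdots\vee r_m)$. By (\ref{it:var-free-haus}) the free algebra $F_n$ is Hausdorff, and proposition~\ref{pr:quotient-codimetric} (quotients by principal ideals) passes Hausdorffness to $M$. For precompactness, $M/dM$ is generated by the $n$ images of the generators of $F_n$ and has dimension strictly less than $d$ by remark~\ref{re:dim-L/dL}, so the dimension-finiteness lemma says $M/dM$ is finite. Finally $(\ref{it:var-precomp})\Rightarrow(\ref{it:var-fmp})$: given a term $t(x_1,\dots,x_n)$ realized as nonzero in some $L\in\cV$, the map $F_n\to L$ sending generators to the witnessing tuple shows $t(y)\neq\ZERO$ in the finitely presented $F_n$; Hausdorffness gives $\codim t(y)=d-1<\infty$, precompactness gives $F_n/dF_n\in\cV$ finite, and $t(\pi_d(y))=\pi_d(t(y))\neq\ZERO$ there is precisely FMP for $t$.

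The main obstacle is the dimension-finiteness lemma, which rests on the identification $\dim L^*=\mathrm{length}(u_{L^*,G})$ and the combinatorial bound on reduced Kripke models derived in the paragraphs before fact~\ref{fa:d-equiv-fini}. Once this is in hand, all four implications reduce to routine manipulations with the universal property of free algebras, the principal-ideal observation for finitely generated ideals in a lattice, proposition~\ref{pr:quotient-codimetric}, and the morphism/codimension inequality of corollary~\ref{co:TC-morph-codim}.
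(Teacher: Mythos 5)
Your proof is correct, and it follows the same cycle as the paper, $(1)\Rightarrow(2)\Rightarrow(3)\Rightarrow(4)\Rightarrow(1)$, with essentially the paper's own arguments for $(1)\Rightarrow(2)$, $(2)\Rightarrow(3)$ and $(4)\Rightarrow(1)$ (in $(2)\Rightarrow(3)$ you invoke part~(\ref{it:morph-lipshitz}) of corollary~\ref{co:TC-morph-codim} in contrapositive form where the paper uses part~(\ref{it:dL-Ker-phi}); both work, and yours does not even need surjectivity). The genuine divergence is in $(3)\Rightarrow(4)$, the only implication that needs the finite model property machinery. The paper argues syntactically: two terms with distinct images in $\cF/d\cF$ dualise to $\lha$\--terms that are not $d$\--equivalent, so fact~\ref{fa:d-equiv-fini} forces $\cF/d\cF$ to be finite; Hausdorffness then passes to finitely presented quotients by proposition~\ref{pr:quotient-codimetric}, and precompactness passes by the (there implicit, later explicit in corollary~\ref{co:fg-precomp}) observation that a quotient of a precompact algebra is precompact. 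You instead extract from the same counting induction a semantic ``dimension-finiteness lemma'': an $n$\--generated co-Heyting algebra of dimension $<d$ has as dual an $n$\--generated Heyting algebra whose reduced Kripke model $u_{L^*,G}$ has length $\leq d-1$, hence is finite by the induction preceding fact~\ref{fa:d-equiv-fini}, so the algebra has finite prime spectrum and is itself finite; applied to $M/dM$, which is $n$\--generated of dimension $<d$ by remark~\ref{re:dim-L/dL}, this gives precompactness of every finitely presented $M$ directly, with no transfer step. Both routes rest on the same combinatorial core; what yours buys is a cleaner intermediate statement that applies to arbitrary finitely generated algebras of finite dimension (in effect you prove along the way that every finitely generated co-Heyting algebra is precompact, which the paper only records in corollary~\ref{co:fg-precomp}), at the price of checking that the counting induction really bounds the cardinality of each reduced model of bounded length and not merely the number of isomorphism types --- which it does, since finiteness of the number of isomorphism types already presupposes such a bound, and of noting that the restriction of a reduced model to the points of rank $\leq d$ stays reduced because the theory of a point depends only on the points below it.
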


\begin{proof}
(\ref{it:var-fmp})$\Rightarrow$(\ref{it:var-free-res-fi})\enskip
Let $\cF$ be an algebra free in $\cV$. Every non zero element of $\cF$
can be written as $t(X)$ for some $\ltc$\--term $t(x)$ and some finite subset $X$ of
the free generators of $\cF$. Since $\cV$ has the finite model property 
there exists a finite algebra $L'$ in
$\cV$ such that $t(a') \neq \ZERO$ for some tuple $a'$ of elements of
$L'$. Let $\varphi:\cF \to L'$ be the unique $\ltc$\--morphism which maps $X$
onto $a'$ and the other generators of $\cF$ to $\ZERO$. Then $I=\Ker
\varphi$ is an ideal of $\cF$ not containing $t(X)$ such that $\cF/I$ is
finite. 
\smallskip

(\ref{it:var-free-res-fi})$\Rightarrow$(\ref{it:var-free-haus})\enskip
Let $\cF$ be an algebra free in $\cV$ and $t(X)$ a non zero element of
$\cF$. The assumption (\ref{it:var-free-res-fi}) gives an ideal $I$ of
$\cF$ not containing $t(X)$ such that $\cF/I$ is finite. Then $\cF/I$ has 
finite dimension, say $d$. 
By corollary~\ref{co:TC-morph-codim}(\ref{it:phi-de-dL}) it follows that $(d+1)\cF$ is
contained in $\Ker \varphi$. So $t(X) \notin (d+1)L$ that is $\codim t(X) \leq d$ is
finite as required.
\smallskip

(\ref{it:var-free-haus})$\Rightarrow$(\ref{it:var-precomp})\enskip
By proposition~\ref{pr:quotient-codimetric} it is sufficient to show
that every free Heyting algebra $\cF$ with a finite set of generators $X$
is precompact. 
Let $t_1(X),t_2(X)$ be any two elements of $\cF$ having different images
in $\cF/d\cF$. Let $Z$ be the image of $X$ in
$\cF/d\cF$ and $Z^*=\{z^*\}_{z\in Z}$ its image in the dual $(\cF/d\cF)^*$. 
By assumption $t_1(Z)\neq t_2(Z)$ {\it ie.\,} 
$t_1(Z) \bigtriangleup t_2(Z) \neq \ZERO$. Dualizing: 
\begin{displaymath}
  (\cF/d\cF)^* \models t^*_1(Z^*) \leftrightarrow t^*_2(Z^*) \neq \UN
\end{displaymath}
where $t^*_i$ is the $\lha$\--term obtained from $t_i$ by dualisation.
Since $\cF/d\cF$ has dimension at most $d$ any chain of prime filters of
$\cF/d\cF$ has length at most $d$. But the prime filters of $\cF/d\cF$ are 
exactly the complements of the prime ideals of its dual $(\cF/d\cF)^*$. 
So the Kripke model $u_{(\cF/d\cF)^*,Z^*}:P_{(\cF/d\cF)^*} \to Z^*$ is a reduced 
Kripke model of height at most $d$ in which $t^*_1 \leftrightarrow t^*_2$ is not true. 

This proves that if $t_1(X),t_2(X)$ have different images
in $\cF/d\cF$ then $t_1^*$, $t_2^*$ are not $d$\--equivalent. By
fact~\ref{fa:d-equiv-fini} there is only a finite number of
$d$\--equivalence classes of $\lha$\--terms with variables in the finite set
$Z^*$ hence $\cF/d\cF$ is finite. 
\smallskip

(\ref{it:var-precomp})$\Rightarrow$(\ref{it:var-fmp})\enskip
Let $t$ be an $\ltc$\--term with $n$
variables such that the formula $\exists x,\, t(x) \neq \ZERO$ holds in some 
algebra $L$ in $\cV$. Let $\cF$ be a free algebra in $\cV$ having an
$n$\--tuple $X$ of generators. The assumption on $t$ implies
that $t(X)\neq\ZERO$. Since $\cF$ is Hausdorff by (\ref{it:var-precomp}), there is a positive
integer $d$ such that $t(X) \notin d\cF$ hence the formula 
$\exists x, t(x)\neq\ZERO$ holds in $\cF/d\cF$ as well, which is finite by
(\ref{it:var-precomp}). 
\end{proof}

\begin{corollary}\label{co:fg-precomp}%
  Every finitely generated co-Heyting algebra is precompact. Every
  finitely presented co-Heyting algebra is precompact Hausdorff.
\end{corollary}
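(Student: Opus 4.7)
The plan is to deduce both assertions from Proposition~\ref{pr:fg-free-TC-codimetric}, applied to the variety $\cV$ of all co-Heyting algebras. The applicability rests on Fact~\ref{fa:fmp-HA}, which asserts exactly that $\cV$ has the finite model property, i.e.\ condition~(\ref{it:var-fmp}) of the proposition. Consequently all four equivalent conditions of the proposition hold for $\cV$.

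The second assertion is then immediate: ``finitely presented $\Rightarrow$ precompact Hausdorff'' is precisely condition~(\ref{it:var-precomp}) of the proposition specialised to $\cV$.

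For the first assertion I would reduce to the free case. Any finitely generated co-Heyting algebra $L$ can be written as a quotient $L=\cF/I$ of a finitely generated free co-Heyting algebra $\cF$. The proof of the implication (\ref{it:var-free-haus})$\Rightarrow$(\ref{it:var-precomp}) inside Proposition~\ref{pr:fg-free-TC-codimetric} already establishes that $\cF/d\cF$ is finite for every positive integer $d$, so $\cF$ itself is precompact. To transfer this to $L$, let $\pi:\cF\to L$ denote the canonical projection and invoke Corollary~\ref{co:TC-morph-codim}(\ref{it:phi-de-dL}), which gives $\pi(d\cF)=dL$. The induced map $\cF/d\cF\to L/dL$ is therefore surjective, and since $\cF/d\cF$ is finite so is $L/dL$. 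This is exactly the precompactness of $L$.

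There is no real obstacle here: all the work has been done in Proposition~\ref{pr:fg-free-TC-codimetric}. The only point worth flagging is that precompactness automatically passes to $\ltc$-quotients, thanks to the compatibility of the filtration $(dL)_{d<\omega}$ with surjective morphisms recorded in Corollary~\ref{co:TC-morph-codim}.
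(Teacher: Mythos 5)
Your proof is correct and follows essentially the same route as the paper: both deduce the finitely presented case directly from condition~(\ref{it:var-precomp}) of proposition~\ref{pr:fg-free-TC-codimetric} (applicable since fact~\ref{fa:fmp-HA} gives the finite model property for the variety of all co-Heyting algebras), and both handle the finitely generated case by writing $L$ as a quotient of a finitely generated free algebra and transferring precompactness along the surjection. The only cosmetic difference is the bookkeeping for that transfer: the paper observes that $L'/dL'$ is the quotient of $L/dL$ by $\pi_d(I)$, while you invoke corollary~\ref{co:TC-morph-codim}(\ref{it:phi-de-dL}) to get a surjection $\cF/d\cF \to L/dL$; these are two phrasings of the same compatibility of the filtration with surjective morphisms.
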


\begin{proof}
If $I$ is any ideal of a $\cV$\--algebra $L$ and $L'=L/I$ then
$L'/dL'$ is also the quotient of $L/dL$ by $\pi_d(I)$. So the
homomorphic image of any precompact co-Heyting algebra is
precompact. Since the variety of all co-Heyting algebras has the
finite model property, the result then follows immediately from
proposition~\ref{pr:fg-free-TC-codimetric}.
\end{proof}

Note that the quotient of a free co-Heyting algebra by any closed ideal 
is Hausdorff by proposition~\ref{pr:quotient-codimetric}, hence a finitely
generated co-Heyting algebra can be Hausdorff without being finitely
presented. 

\begin{example}\label{ex:fg-hausdorff-non-fp}
  Let $\cF_n$ be the free co-Heyting algebra with $n$ generators with
  $n \geq 2$ so that $\cF_n \neq \widehat{\cF_n}$ (the Hausdorff completion
  of $\cF_n$, see section~\ref{se:pre-compact} or the comments after
  fact~3.6 in \cite{darn-junk-2008}). Choose any
  $\widehat{a}$ in $\widehat{\cF_n} \setminus \cF_n$. Then $I = \widehat{a}\downarrow \cap
  \cF_n$ is a closed ideal of $\cF_n$ which is not principal hence
  $\cF_n/I$ is finitely generated and Hausdorff but not finitely
  presented.
\end{example}

\begin{example}\label{ex:fg-non-hausdorff}
  Let $\cF_n$ be as above. For every $n \geq 2$ there are many elements
  in $\cF_n$ which can not be written as the join of finitely many
  join irreducible elements, such as the meet of any two join
  irreducible elements (see remark~4.14 in \cite{darn-junk-2008}). 
  Given any such element $a$, the ideal $I$ generated by the join
  irreducible elements smaller than $a$ is not closed because $a \notin I$
  but $a$ belongs to the topological closure of $I$ (here we use that
  $a = \jjoin \jsupp{}(a)$, see proposition~\ref{pr:precomp-join-irr}). So
  $\cF_n/I$ is finitely generated but not Hausdorff by
  proposition~\ref{pr:quotient-codimetric}.
\end{example}

\section{Precompact Hausdorff co-Heyting algebras}
\label{se:pre-compact}

We have seen that every finitely presented co-Heyting algebra is
precompact Hausdorff, but the latter form a much larger class. It is
then remarkable that most of the very nice algebraic properties of
finitely presented free Heyting algebras obtained in
\cite{darn-junk-2008} from \cite{bell-1986} actually generalise, after
dualisation, to precompact Hausdorff co-Heyting algebras.

\phantomsection
\addcontentsline{toc}{subsection}%
   {Precompactness and profinite completion}
\subsection*{Precompactness and profinite completion}

Let $L$ be a co-Heyting algebra, $d$ a positive integer and
$L'=L/(d+1)L$. Corollary~\ref{co:TC-morph-codim}(\ref{it:phi-de-dL})
asserts that $\pi_{dL}^{-1}(dL')=dL$ hence (see
remark~\ref{re:identification-quotient}) $L'/dL'$ identifies with
$L/dL$ and $\pi_{dL'}$ with a surjective map that we denote:
\begin{displaymath}
  \pi_{d,d+1}:L/(d+1)L \to L/dL
\end{displaymath}
Similarly, in order to make the reading easier, we let $\pi_d$ denote
$\pi_{dL}$ for every positive integer $d$. So $\{\pi_{d,d+1}:L/(d+1)L \to
L/dL\}_{d<\omega}$ is a projective system and the following diagram is
commutative:
\begin{displaymath}
   \UseTips
 \newdir{ >}{!/-5pt/\dir{>}}
\xymatrix{
   & & & & L \ar[dd]_{\pi_0} \ar[lldd]_{\pi_d} \ar[llldd]_{\pi_{d+1}} \\
   & & & & \\
   \cdots \ar[r] & L/(d+1)L \ar[r]_{\ \ \ \pi_{d,d+1}} & L/dL \ar[r] & \cdots \ar[r] & L/0L } %
\end{displaymath}
We denote by $\widehat{L}$ its projective limit. Note that the
canonical map from $L$ to $\widehat{L}$ is an embedding if and only if
$L$ is Hausdorff. The codimetric topology on each $L/dL$ is the
discrete topology. We equip $\widehat{L}$ with the corresponding
projective topology. As a projective limit of Hausdorff topologies,
this topology on $\widehat{L}$ is Hausdorff and the image of $L$ in
$\widehat{L}$ is dense in $\widehat{L}$. It will be shown in
section~\ref{se:completion} that
$\widehat{L}$ is nothing but the Hausdorff completion of $L$. 
However, when $L$ is precompact Hausdorff, the proof that we provide
below is much simpler.
\smallskip

Assume that $L$ is precompact Hausdorff. Then the projective topology
on $\widehat{L}$ is profinite hence compact Hausdorff. We refer the
reader to any book of topology for this and the following classical
results on projective limits of topological spaces. We identify $L$ 
with its image in $\widehat{L}$ {\it via} the diagonal embedding. We
denote by $\overline{\pi}_d$ (resp. $\widehat{\pi}_d$) the canonical
projection of $\widehat{L}$ onto $L/dL$ (resp.
$\widehat{L}/d\widehat{L}$). Obviously $\pi_d$ is the restriction of
$\overline{\pi}_d$ to $L$ and the kernel of $\overline{\pi}_d$ is the
topological closure $\overline{dL}$ of $dL$ in $\widehat{L}$. 

\begin{displaymath}
   \UseTips
 \newdir{ >}{!/-5pt/\dir{>}}
\xymatrix{
   L  \ar[d]_{\pi_d}\ar[rr]  & &
   \ar[lld]_{\overline{\pi}_d}\ar[d]^{\widehat{\pi}_d} \widehat{L} \\
   L/dL                    & & \widehat{L}/d\widehat{L} }
\end{displaymath}

\begin{theorem}\label{th:precomp-dL}%
  Let $L$ be a Hausdorff precompact co-Heyting algebra. Then for
  every positive integer $d$:
  \begin{enumerate}
    \item
      $dL$ and $d\widehat{L}$ are principal\,\footnote{So $dL$ is
      principal for every finitely presented
      co-Heyting algebra $L$, by corollary~\ref{co:fg-precomp}. This
      is actually true for finitely generated co-Heyting algebras, as we
      will show in section~\ref{se:appendix}.} and
      $\varepsilon_d(L)=\varepsilon_d(\widehat{L})$;
    \item 
      $\overline{dL}=d\widehat{L}$ and $\widehat{L}/d\widehat{L}$
      identifies with $L/dL$.
  \end{enumerate}
  As a consequence the projective topology on $\widehat{L}$ coincides
  with its codimetric topology and $(\widehat{L},\dist{\widehat{L}})$
  is the completion of the metric space $(L,\dist{L})$.
\end{theorem}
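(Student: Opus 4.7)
My plan is to prove the theorem in three stages---identifying the relevant ideals in $\widehat{L}$, establishing principality via compactness, and locating the generator inside $L$---and then derive the stated consequence.

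First I would establish the identities $\overline{dL} = d\widehat{L} = \Ker \overline{\pi}_d$ inside $\widehat{L}$. Three of the four inclusions are immediate: $dL \subseteq \Ker \overline{\pi}_d$ because $\overline{\pi}_d$ extends $\pi_d$; $dL \subseteq d\widehat{L}$ because the positive existential $\ltc$-formula of Theorem~\ref{th:dim-et-rang} defining $dL$ is preserved by the embedding $L \hookrightarrow \widehat{L}$; and $d\widehat{L} \subseteq \Ker \overline{\pi}_d$ by Corollary~\ref{co:TC-morph-codim}(\ref{it:dL-Ker-phi}) applied to the surjection $\overline{\pi}_d$, using Remark~\ref{re:dim-L/dL} to see that $\widehat{L}/\Ker \overline{\pi}_d \simeq L/dL$ has dimension $<d$. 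The fourth inclusion $\Ker \overline{\pi}_d \subseteq \overline{dL}$ uses density of $L$ in $\widehat{L}$: for $x \in \Ker \overline{\pi}_d$ choose a net $(a_i) \subseteq L$ with $a_i \to x$; continuity of $\overline{\pi}_d$ and discreteness of the finite space $L/dL$ force $\pi_d(a_i)$ to be eventually $\ZERO$, so $a_i \in dL$ eventually. Combined with closedness of $\Ker \overline{\pi}_d$ this closes the loop, and Remark~\ref{re:identification-quotient} then identifies $\widehat{L}/d\widehat{L}$ with $L/dL$ via $\overline{\pi}_d$, delivering the second half of claim~(2).

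Next I would prove that $d\widehat{L}$ is principal by a compactness argument in $\widehat{L}$. Since $L/(d+1)L$ is finite, its subideal $d(L/(d+1)L)$ is itself principal with some generator $\alpha$. The preimage $S = \overline{\pi}_{d+1}^{-1}(\alpha) \subseteq \widehat{L}$ is closed (hence compact), nonempty by surjectivity, and stable under both $\vee$ and $\wedge$ since $\overline{\pi}_{d+1}$ is a lattice morphism. By continuity of $\vee$ (Proposition~\ref{pr:term-lipshitz}) the directed set $S$ attains a maximum $\varepsilon \in S$; for any $b \in d\widehat{L}$ we have $\overline{\pi}_{d+1}(b) \leq \alpha$, so $b \vee \varepsilon \in S$ forces $b \leq \varepsilon$, showing $\varepsilon = \varepsilon_d(\widehat{L})$. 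The main obstacle is then to show that $\varepsilon$ actually lies in $L$, so that $dL$ is principal with the same generator. My plan is to realize $\varepsilon$ as the value reached by a suitable sequence of lifts in $L$: letting $\alpha^{(k)} = \varepsilon_d(L/(d+k)L)$, Corollary~\ref{co:TC-morph-codim}(\ref{it:phi-de-epsilon}) makes the family $(\alpha^{(k)})_k$ coherent under the bonding projections and pins down $\varepsilon$ by $\overline{\pi}_{d+k}(\varepsilon) = \alpha^{(k)}$. Lifting each $\alpha^{(k)}$ to some $b_k \in dL$ and setting $\tilde a_k = b_1 \vee \cdots \vee b_k$ produces an increasing Cauchy sequence in $L$ with $\pi_{d+k}(\tilde a_k) = \alpha^{(k)}$, converging in $\widehat{L}$ to $\varepsilon$. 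The delicate point is to choose the lifts $b_k$ compatibly so that the sequence stabilizes in $L$; this requires Corollary~\ref{co:epsilon-ll} (giving $\alpha^{(k+1)} \ll \alpha^{(k)}$ in each relevant quotient) together with a compactness-based compatibility argument showing that the successive lifts can be taken within a prescribed principal ideal.

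For the stated consequence, once $\overline{dL} = d\widehat{L}$ has been established, a basic codimetric neighborhood of $x \in \widehat{L}$, namely $\{y \in \widehat{L} : y \bigtriangleup x \in d\widehat{L}\}$, coincides with the projective basic neighborhood $\overline{\pi}_d^{-1}(\overline{\pi}_d(x)) = \{y : y \bigtriangleup x \in \Ker \overline{\pi}_d\}$. Hence the codimetric and projective topologies on $\widehat{L}$ agree, and since the projective topology is compact Hausdorff with $L$ dense in $\widehat{L}$, $(\widehat{L}, \dist{\widehat{L}})$ is the Hausdorff completion of $(L, \dist{L})$.
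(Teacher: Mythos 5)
Your stage~1 proves less than it claims: the four inclusions you list, combined with closedness of $\Ker \overline{\pi}_d$, yield only $dL \subseteq d\widehat{L} \subseteq \Ker \overline{\pi}_d = \overline{dL}$. The remaining inclusion $\overline{dL} \subseteq d\widehat{L}$ is never established, and the same omission undercuts stage~2: your compact-directed-set argument correctly produces a maximum $\varepsilon$ of $S = \overline{\pi}_{d+1}^{-1}(\alpha)$ which is an \emph{upper bound} of $d\widehat{L}$, but to conclude $\varepsilon = \varepsilon_d(\widehat{L})$ you must also show $\varepsilon \in d\widehat{L}$, i.e.\ $\codim_{\widehat{L}} \varepsilon \geq d$, and nothing in the proposal does this. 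The paper gets it from corollary~\ref{co:epsilon-ll} applied in each finite quotient $L/kL$: since $\ll$ is defined by equations ($b \leq a$ and $a - b = a$), the coordinatewise relations pass to the projective limit and give a chain $\varepsilon = \widehat{\varepsilon}_d \ll \widehat{\varepsilon}_{d-1} \ll \cdots \ll \widehat{\varepsilon}_0 = \UN$, which by theorem~\ref{th:dim-et-rang} witnesses $\codim_{\widehat{L}} \varepsilon \geq d$. You cite corollary~\ref{co:epsilon-ll} only inside stage~3 and for a different purpose, so this step is genuinely missing.

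The more serious gap is stage~3. Your Cauchy sequence $\tilde a_k \in dL$ with $\pi_{d+k}(\tilde a_k) = \alpha^{(k)}$ is correctly set up and does converge to $\varepsilon$, but the ``stabilization'' you defer is not a technical refinement: stabilization at step $k$ forces $\pi_{d+j}(\tilde a_k) = \alpha^{(j)}$ for all $j$, which says precisely $\tilde a_k = \varepsilon$, i.e.\ $\varepsilon \in dL$ --- the very statement to be proved. Worse, the ``compactness-based compatibility argument'' cannot live where you need it: $L$ is precompact but not compact, so the nested closed sets of lifts $T_k = \{a \in dL \tq \pi_{d+k}(a) = \alpha^{(k)}\}$ have the finite intersection property yet may well have empty intersection \emph{in $L$}; compactness is available only in $\widehat{L}$, where it merely re-proves $\varepsilon \in \widehat{L}$. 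The paper's way around this circularity is lemma~\ref{le:precomp-isole}: using $\widehat{\varepsilon}_{d+1} \ll \widehat{\varepsilon}_d$ one computes that $B(\widehat{\varepsilon}_d, d+1) = \{\widehat{\varepsilon}_d\}$, so $\widehat{\varepsilon}_d$ is an \emph{isolated point} of $\widehat{L}$, and an isolated point belongs to every dense subset, in particular to $L$. Some such idea is indispensable; once $\varepsilon \in dL$ is secured (by isolation, or otherwise), your remaining deductions --- principality of $dL$ and $d\widehat{L}$, the identity $\overline{dL} = d\widehat{L}$ via closedness of $\varepsilon\downarrow$, the identification of the quotients, and the agreement of the two topologies --- all go through as you describe.
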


We first need a lemma.
Recall that $\widehat{L}$ can be represented as:
\begin{displaymath}
  \widehat{L} = \bigl\{(x_k)_{k<\omega} \tq \forall k,\ x_k \in L/kL
                       \mbox{ and }\pi_{k,k+1}(x_{k+1})=x_k\bigr\}
\end{displaymath}
Note that if $L$ is precompact then for every positive integer $d$ and
every $k$, $d(L/kL)$ is obviously principal because $L/kL$ is finite. 
Moreover $\big(\varepsilon_d(L/kL)\big)_{k<\omega}$ belongs to $\widehat{L}$ by
corollary~\ref{co:TC-morph-codim}(\ref{it:phi-de-epsilon}), using the
above representation of $\widehat{L}$. Let us denote by $\widehat{\varepsilon}_d$ 
this element of $\widehat{L}$. Note that
$\widehat{\varepsilon}_{d+1} \ll \widehat{\varepsilon}_d$ because $\varepsilon_{d+1}(L/kL) \ll \varepsilon_d(L/kL)$
for every $k$ by corollary~\ref{co:epsilon-ll}. 
A basis of neighborhood of
any element $x \in \widehat{L}$ is given as $d$ ranges over the positive
integers, by\footnote{We simply use here that in each $L/kL$, a basis
of neighborhood of $x_k$ with respect to the discrete/codimetric topology 
is given by $\{U(x_k,d)\}_{d<\omega}$ (see (\ref{eq:Uxd}) in
section~\ref{se:pseudometric}).}:
\begin{displaymath}
  B(x,d)=\{y \in \widehat{L} \tq x \bigtriangleup y \leq \widehat{\varepsilon}_d\}
\end{displaymath}
In particular $\{\widehat{\varepsilon}_d\downarrow\}_{d<\omega}$ is a basis of neighborhood of
$\ZERO$ in $\widehat{L}$.

\begin{lemma}\label{le:precomp-isole}
  Let $L$ be a Hausdorff precompact co-Heyting algebra. Then an
  element $x \in \widehat{L}$ is isolated (with respect to the
  projective topology) if and only if $\widehat{\varepsilon}_d
  \leq x$ for some $d$. In this case $x \in L$. 
  In particular $\widehat{\varepsilon}_d \in L$ for every $d$.
\end{lemma}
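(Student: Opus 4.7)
The plan is to prove the two directions of the equivalence and extract $x\in L$ along the way. The forward direction is essentially a test against a single witness in $B(x,d)$. The backward direction is the heart of the argument and relies on strengthening the hypothesis $\widehat{\varepsilon}_d\leq x$ into $\widehat{\varepsilon}_{d+1}\ll x$, after which both isolation and membership in $L$ fall out easily.

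For the forward direction, if $x$ is isolated then $B(x,d)=\{x\}$ for some $d$, since $\{B(x,d)\}_{d<\omega}$ is a basis of neighborhoods of $x$. I would test the element $y=x\join\widehat{\varepsilon}_d$: the computation $y\bigtriangleup x=\widehat{\varepsilon}_d-x\leq\widehat{\varepsilon}_d$ shows $y\in B(x,d)=\{x\}$, hence $\widehat{\varepsilon}_d\leq x$.

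For the backward direction, assume $\widehat{\varepsilon}_d\leq x$. The key step is the claim $\widehat{\varepsilon}_{d+1}\ll x$. To prove it, take any $c$ with $x\leq\widehat{\varepsilon}_{d+1}\join c$; I want to deduce $x\leq c$. First, since $\widehat{\varepsilon}_{d+1}\ll\widehat{\varepsilon}_d$ (noted just before the lemma) and $\widehat{\varepsilon}_d\leq x\leq\widehat{\varepsilon}_{d+1}\join c$, the definition of $\ll$ gives $\widehat{\varepsilon}_d\leq c$. Second, subtracting $\widehat{\varepsilon}_d$ from $x\leq\widehat{\varepsilon}_{d+1}\join c$ using monotonicity of $a\mapsto a-\widehat{\varepsilon}_d$, the rule $(a\join b)-c=(a-c)\join(b-c)$, and $\widehat{\varepsilon}_{d+1}-\widehat{\varepsilon}_d=\ZERO$, yields $x-\widehat{\varepsilon}_d\leq c-\widehat{\varepsilon}_d\leq c$. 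Hence $x=\widehat{\varepsilon}_d\join(x-\widehat{\varepsilon}_d)\leq c$ as desired.

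With $\widehat{\varepsilon}_{d+1}\ll x$ in hand, both isolation and $x\in L$ follow. For any $y\in B(x,d+1)$, we have $y-x\leq\widehat{\varepsilon}_{d+1}\leq x$, so $y=(y-x)\join(y\meet x)\leq x$; and $x-y\leq\widehat{\varepsilon}_{d+1}$ together with $y\leq x$ gives $x\leq y\join\widehat{\varepsilon}_{d+1}$, so the definition of $\widehat{\varepsilon}_{d+1}\ll x$ yields $x\leq y$, and $y=x$. Thus $B(x,d+1)=\{x\}$, proving $x$ is isolated. For $x\in L$, surjectivity of $\pi_{d+1}:L\to L/(d+1)L$ provides $a\in L$ with $\pi_{d+1}(a)=\overline{\pi}_{d+1}(x)$; viewed in $\widehat{L}$, this $a$ lies in $B(x,d+1)=\{x\}$, so $x=a\in L$. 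The closing claim $\widehat{\varepsilon}_d\in L$ is then the special case $x=\widehat{\varepsilon}_d$, which trivially satisfies $\widehat{\varepsilon}_d\leq x$. The main obstacle throughout is precisely the upgrade $\widehat{\varepsilon}_d\leq x\Rightarrow\widehat{\varepsilon}_{d+1}\ll x$; all the rest is formal manipulation of the basis $B(x,d)$.
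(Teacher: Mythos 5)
Your proof is correct and takes essentially the same route as the paper's: the forward direction tests $x\join\widehat{\varepsilon}_d$ against $B(x,d)=\{x\}$, the backward direction hinges on upgrading $\widehat{\varepsilon}_d\leq x$ to $\widehat{\varepsilon}_{d+1}\ll x$ (the paper gets this from $\widehat{\varepsilon}_{d+1}\ll\widehat{\varepsilon}_d\leq x$ in one line, you re-derive it from the definition of $\ll$) and then shows $B(x,d+1)=\{x\}$. Your concluding argument that $x\in L$ via surjectivity of $\pi_{d+1}$ is just a hands-on instantiation of the paper's observation that an isolated point belongs to every dense subspace.
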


\begin{proof}
Let $x=(x_k)_{k<\omega}$ be any element of $\widehat{L}$. 
If $x$ is isolated in $\widehat{L}$ then for some
integer $d$ we have $B(x,d)=\{x\}$. On the other hand 
$x \bigtriangleup (x \join \widehat{\varepsilon}_d) =\widehat{\varepsilon}_d - x \leq \widehat{\varepsilon}_d$, 
that is $x \join \widehat{\varepsilon}_d \in B(x,d)$ so we are done. 

Conversely assume that $x = x \join \widehat{\varepsilon}_d$ for some $d$. Then 
$\widehat{\varepsilon}_{d+1} \ll \widehat{\varepsilon}_d \leq x$ hence $x-\widehat{\varepsilon}_{d+1}=x$. For every 
$y \in B(x,d+1)$ we get:
\begin{displaymath}
  x = x - \widehat{\varepsilon}_{d+1} 
    = \big[(x \meet y) \join (x - y)\big]-\widehat{\varepsilon}_{d+1}
    = (x \meet y) - \widehat{\varepsilon}_{d+1} 
    \leq y
\end{displaymath}
And:
\begin{displaymath}
    y = (x \meet y) \join (y -x) \leq x \join  \widehat{\varepsilon}_{d+1} = x
\end{displaymath}
This proves that $B(x,d+1)=\{x\}$.

The last assertion follows because $L$ is dense in $\widehat L$ for the
projective topology, and an isolated point in a topological space
obviously belongs to every dense subspace. 
\end{proof}

We can now achieve the proof of theorem~\ref{th:precomp-dL}.

\begin{proof}
For every positive integer $d$ we have $dL \subseteq d\widehat{L}$ by
corollary~\ref{co:TC-morph-codim}(\ref{it:morph-lipshitz}) because the 
inclusion is an $\ltc$\--morphism. Moreover
$d\widehat{L} \subseteq \overline{dL}$ by
corollary~\ref{co:TC-morph-codim}(\ref{it:phi-de-dL}) because 
$\dim L/dL <d$ and $\overline{dL}=\Ker \overline{\pi}_d$.

By construction the ideal generated in $\widehat L$ by $\widehat{\varepsilon}_d$
is precisely $\overline{dL}$. By lemma~\ref{le:precomp-isole}
$\widehat{\varepsilon}_d$ actually belongs to $L$. Moreover it belongs to $dL$
because:
\begin{displaymath}
  \widehat{\varepsilon}_d \ll \cdots \ll \widehat{\varepsilon}_0 = \UN 
\end{displaymath}
Since $dL \subseteq d\widehat{L} \subseteq \overline{dL}$ it immediately follows that 
$d\widehat{L} = \overline{dL}$ hence $\varepsilon_d(\widehat{L})=\widehat{\varepsilon}_d$. 
Moreover $dL = \overline{dL} \cap L$ hence $dL = \widehat{L} \cap L$. We
conclude that $\varepsilon_d(L)=\widehat{\varepsilon}_d$.

The identification of $\widehat{L}/\widehat{dL}$
with $L/dL$ follows  since
$\widehat{\pi}_d$ and $\overline{\pi}_d$ have the same kernel. 

We have proved that $d\widehat{L} = \widehat{\varepsilon}_d\downarrow$ hence
$B(x,d)= U(x,d)$ (see (\ref{eq:Uxd}) in section~\ref{se:pseudometric}) 
for every positive integer $d$ and every $x \in \widehat{L}$. As a
consequence the projective topology on $\widehat{L}$ coincide with its
codimetric topology. Since $\widehat{L}$ is compact, it is complete,
and since $L$ is dense in $\widehat{L}$ the last statement follows.
\end{proof}

\phantomsection
\addcontentsline{toc}{subsection}%
   {Join irreducible elements}
\subsection*{Join irreducible elements}

We denote as follows the sets of {\df join irreducible}, {\df
completely join irreducible}, {\df meet irreducible} and {\df
completely meet irreducible} elements respectively: 
\begin{eqnarray*}
  \jirr(L)  &=& \{x \in L\setminus\{\ZERO\}\tq \forall a,b \in L,\ x \leq a \join b
                     \Rightarrow x \leq a \hbox{ or } x \leq b\} \\
  \cjirr(L) &=& \{x \in L\setminus\{\ZERO\}\tq \forall A \subseteq L,\ x \leq \jjoin A \Rightarrow \exists a \in A,\ x \leq a\} \\
  \mirr(L)  &=& \{x \in L\setminus\{\UN\}\tq \forall a,b \in L,\ a \meet b \leq x
                     \Rightarrow a \leq x \hbox{ or } b \leq x\} \\
  \cmirr(L) &=& \{x \in L\setminus\{\UN\}\tq \forall A \subseteq L,\ \mmeet A \leq x \Rightarrow \exists a \in A,\ a \leq x\}
\end{eqnarray*}

\begin{remark}\label{re:ll-join-irr}
  If $x$ is join irreducible and $x \nleq y$ then $x-y=x$. Indeed $x \meet y
  <x$ and $x=(x-y)\join(x \meet y)$, then use the join irreducibility of $x$.
  In particular $y \ll x$ whenever $y < x$.
\end{remark}

The following lemma is folklore. 

\begin{lemma}\label{le:quotient-fini}
  Let $\varepsilon$ be any element of a co-Heyting algebra $L$, let $L'$ be
  the quotient of $L$ by the ideal $\varepsilon\downarrow$ and let $\pi:L \to L'$ be the canonical
  projection. 
  \begin{enumerate}
    \item
      $\forall a \in L$, $a - \varepsilon = \min \pi^{-1}(\{\pi(a)\})$ and 
      $a \join \varepsilon = \max \pi^{-1}(\{\pi(a)\})$.

       So the restrictions of $\pi$ to $\{a-\varepsilon\}_{a\in L}$ and 
       $\{a \join \varepsilon\}_{a \in L}$ are one-to-one.
    \item  
      If in addition $L'$ is finite then every prime filter (resp.
      ideal) of $L$ disjoint from $\varepsilon\downarrow$ (resp. containing $\varepsilon\downarrow$) is
      generated by a completely join (resp. meet) irreducible element.
      So $\pi$ induces a one-to-one order preserving correspondence
      between the following sets:
      \begin{eqnarray*}
        \mirr(L') &\longleftrightarrow& \{x \in \cmirr(L) \tq \varepsilon \leq x\}  \\
        \jirr(L') &\longleftrightarrow& \{x \in \cjirr(L) \tq x \nleq \varepsilon\}  
      \end{eqnarray*}
  \end{enumerate}
\end{lemma}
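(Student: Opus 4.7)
For the first part, the plan is to verify the two identities by direct computation using the algebraic rules for $-$. To see $a-\varepsilon$ lies in the fiber of $\pi(a)$, I bound the symmetric difference: $a-\varepsilon \leq a$ gives $(a-\varepsilon)-a=\ZERO$, and the rule $a-(a-b)\leq b$ gives $a-(a-\varepsilon)\leq \varepsilon$, so $(a-\varepsilon)\bigtriangleup a \leq \varepsilon$. For minimality, any $b$ with $\pi(b)=\pi(a)$ satisfies $a-b\leq \varepsilon$, hence $a\leq \varepsilon\join b$, which forces $a-\varepsilon\leq b$ by the very definition of $-$ as a minimum. The argument for $a\join\varepsilon$ is symmetric: membership in the fiber is clear, and if $\pi(b)=\pi(a)$ then $b-a\leq \varepsilon$ yields $b=(b-a)\join(b\meet a)\leq \varepsilon\join a$. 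Injectivity on the two sets is then immediate, since any two elements of the form $a-\varepsilon$ (resp. $a\join\varepsilon$) with equal image are both the minimum (resp. maximum) of the same fiber.

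For the second part, I would first recall the standard correspondence: since $L'$ is finite distributive, every prime filter of $L'$ is principal of the form $\uparrow j'$ for some $j'\in\jirr(L')$, and in $L'$ join-irreducibility coincides with complete join-irreducibility; moreover the prime filters of $L$ disjoint from $\varepsilon\downarrow$ are exactly the preimages $\pi^{-1}(\uparrow j')$. Given such $j'$, let $j$ be the minimum of $\pi^{-1}(\{j'\})$, which exists by part~1. I claim $\pi^{-1}(\uparrow j')=\uparrow j$: the inclusion $\supseteq$ is clear, and if $\pi(a)\geq j'$ then $\pi(a\meet j)=j'$, so $a\meet j\geq j$ by minimality, whence $a\geq j$. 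Thus every such prime filter of $L$ is principal.

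To show $j\in\cjirr(L)$: if $j\leq \jjoin A$, then $j'\leq \jjoin \pi(A)$, and by complete join-irreducibility of $j'$ some $a\in A$ satisfies $\pi(a)\geq j'$, so $a\in\uparrow j$, giving $j\leq a$. Conversely, for $j\in\cjirr(L)$ with $j\nleq\varepsilon$, the decomposition $j=(j-\varepsilon)\join(j\meet\varepsilon)$ combined with join-irreducibility and $j\nleq\varepsilon$ forces $j=j-\varepsilon$, so $j$ is the minimum of its own $\pi$-fiber; and applying join-irreducibility twice to $j\leq a\join b\join\varepsilon$ shows $\pi(j)\in\jirr(L')$. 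This makes $j'\mapsto \min\pi^{-1}(\{j'\})$ and $j\mapsto \pi(j)$ mutually inverse, and both are order-preserving since $j_1\leq j_2\iff \uparrow j_2\subseteq \uparrow j_1\iff j_1'\leq j_2'$.

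The meet-irreducible case is handled by the symmetric argument on prime ideals: prime ideals of $L'$ are principal generated by $m'\in\mirr(L')$, the preimage $\pi^{-1}(\downarrow m')$ equals $\downarrow m$ where $m=\max\pi^{-1}(\{m'\})=a\join\varepsilon$, complete meet-irreducibility of $m$ follows from that of $m'$ in $L'$, and the inverse map $m\mapsto \pi(m)$ fixes the subset because $\varepsilon\leq m$ makes $m\join\varepsilon=m$. I expect the main obstacle to be bookkeeping rather than substantive mathematics: one must carefully pair up the minimum representative with the join-irreducible case and the maximum with the meet-irreducible case, and verify in each direction that the generator stays in $L\setminus\{\ZERO\}$ or $L\setminus\{\UN\}$ as required (for instance, if $\pi(m)=\UN_{L'}$ then $\UN-m\leq\varepsilon\leq m$ would force $m=\UN$, a contradiction).
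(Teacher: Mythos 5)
Your proof of part 1 is correct and is essentially the paper's argument: everything rests on the equivalence $\pi(y)\leq\pi(x)\iff y-x\leq\varepsilon\iff y\leq x\join\varepsilon$. In part 2 you follow the same skeleton as the paper (prime filters/ideals of $L$ disjoint from/containing $\varepsilon\downarrow$ correspond to those of the finite quotient $L'$, and part 1 identifies the generators of the preimages as $x-\varepsilon$ and $x\join\varepsilon$), but you diverge at the decisive step. The paper establishes complete irreducibility through covers: it shows that the generator $x\join\varepsilon$ of $\pi^{-1}(x'\downarrow)$ has a unique successor $a\join\varepsilon$ (where $\pi(a)=x'^+$), and dually that $x-\varepsilon$ has a unique predecessor $a-\varepsilon$. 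You instead verify the definition of complete irreducibility directly, transporting an arbitrary join $j\leq\jjoin A$ into $L'$ and using that join irreducible equals completely join irreducible in the finite lattice $L'$. Your route is in fact the more robust one for the join half: in a co-Heyting algebra the infinite distributive law $x\join\mmeet A=\mmeet_{a\in A}(x\join a)$ holds (since $x\join{-}$ has the left adjoint ${-}-x$), so ``unique successor'' does imply completely meet irreducible; but the dual law can fail (e.g.\ in the co-Heyting algebra of closed subsets of $\RR$, the point $\{0\}$ has a unique predecessor yet lies below the closed join of the $\{1/n\}$ without lying below any of them), so ``unique predecessor'' does not by itself imply completely join irreducible, and the paper's join case tacitly needs an argument of your kind.

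The one substantive step you assert without justification is $j\leq\jjoin A\Rightarrow j'\leq\jjoin\pi(A)$ (and its dual for meets): this says that $\pi$ preserves arbitrary existing joins and meets, which is false for quotients by general ideals and holds here precisely because the kernel is principal. It follows in two lines from part 1: if $\pi(c)\geq\pi(a)$ for every $a\in A$, then $c\join\varepsilon\geq a$ for every $a$, hence $c\join\varepsilon\geq\jjoin A$ and $\pi(c)\geq\pi(\jjoin A)$, so $\pi(\jjoin A)=\jjoin\pi(A)$; dually, $\pi(c)\leq\pi(a)$ for every $a$ gives $c-\varepsilon\leq\mmeet A$, hence $\pi(\mmeet A)=\mmeet\pi(A)$. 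With these lines inserted your proof is complete; the remaining verifications (mutual inverseness, order preservation, and the endpoint checks $j\neq\ZERO$, $m\neq\UN$) are fine as written.
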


\begin{proof}
For every $x,y$ in $L$, $\varphi(y)\leq\varphi(x) \iff y-x \leq \varepsilon$. The first point then
follows from straightforward calculations:
\begin{displaymath}
  y-\varepsilon \leq a \iff y \leq x \join \varepsilon \iff y-x \leq \varepsilon
\end{displaymath}

Now assume that $L'$ is finite. Then every prime ideal of $L'$ is
generated by a completely meet irreducible element. As a surjective
$\ltc$\--morphism,
$\pi$ induces a one-to-one order preserving correspondence between the prime ideals of
$L$ containing $\varepsilon\downarrow$ (its kernel) and the prime ideals of $L'$ (its
image) which preserves inclusions. So it is
sufficient to prove that, given an element $x'$ of $L'$ having a
unique successor $x'^ +$, the ideal $\varphi^{-1}(x'\downarrow)$ is generated by an
element having a unique successor. In order to do this let $x$ (resp.
$a$) be any element of $L$ such that $\varphi(x)=x'$ (resp. $\varphi(a)=x'^+$). For
every $b \in L$ we have: 
\begin{displaymath}
  \varphi(b) \in x'\downarrow \iff \varphi(b) \leq \varphi (x) \iff b \leq x \join \varepsilon 
\end{displaymath}
So $x \join \varepsilon$ is the generator of $\varphi^{-1}(x'\downarrow)$. Moreover:
\begin{eqnarray*}
  x \join \varepsilon < b \join \varepsilon &\iff& x'= \varphi(x) < \varphi(b) \\
                &\iff& x'^+= \varphi(a) \leq \varphi(b) \\
                &\iff& a \join \varepsilon \leq b \join \varepsilon
\end{eqnarray*}
So $a \join \varepsilon$ is the unique successor of $x \join \varepsilon$ in $L$. 

The case of join irreducible elements is similar: $\pi$ induces a
one-to-one order preserving correspondence between the prime filters
disjoint from $\varepsilon\downarrow$ and the prime filters of $L'$. Given an element
$x'\in L'\setminus\{\ZERO\}$ having a unique predecessor $x'^-$, the inverse image by
$\pi$ of $x'\uparrow$ is generated by an element $x$ having a unique
predecessor. We take any two elements $x,a \in L$ such that $\pi(x)=x'$
and $\pi(a)=x'^-$. The reader may easily check that $x - \varepsilon$ is a
generator of $\pi^{-1}(x'\uparrow)$ and $a-\varepsilon$ is its unique predecessor. 
\end{proof}

\begin{remark}\label{re:precomp-corresp-pi-d}%
  If $L$ is a precompact Hausdorff co-Heyting algebra 
  and $d$ a positive integer then $\Ker \pi_d = \varepsilon_d(L)\downarrow$ by 
  theorem~\ref{th:precomp-dL}. Then lemma~\ref{le:quotient-fini} applied to
  $\varepsilon_d(L)$ tells us that every join (resp.\ meet) irreducible
  element of $L \setminus dL$ (resp. of $\varepsilon_d(L)\uparrow$) is completely join 
  (resp.\ meet) irreducible, and that $\pi_d$ induces a one-to-one 
  correspondence between the following sets:
      \begin{eqnarray*}
        \jirr(L/dL) &\longleftrightarrow& \cjirr(L) \setminus dL \\
        \mirr(L/dL) &\longleftrightarrow& \cmirr(L) \cap \varepsilon_d(L)\uparrow
      \end{eqnarray*}
  These sets are finite, in particular there are finitely many
  completely join irreducible elements in $L$ of any given finite
  codimension.
\end{remark}
Given an element $a \in L$ the maximal elements of $\jirr(L) \cap a\downarrow$, if
they exist, are called the {\df join irreducible components} of $a$ in
$L$. The set of join irreducible components of $a$ is denoted
$\jsupp{L}(a)$. As usually the index $L$ is often omitted.
The {\df meet irreducible components} of $a$ in $L$ and the set
$\msupp{L}(a)$ are defined dually. 

\begin{proposition}\label{pr:precomp-join-irr}%
  Let $L$ be a precompact Hausdorff co-Heyting algebra.
  \begin{enumerate}
    \item 
      $L$ and $\widehat{L}$ have the same completely join irreducible
      elements.
    \item 
      Every join irreducible element of $L$ is completely join
      irreducible.
    \item 
      For every $x \in \cjirr(L)$, the cofoundation rank of $x$ in
      $\cjirr(L)$ is finite. It is the codimension of $x$.
    \item 
      $\cjirr(L)$ satisfies the ascending chain condition.
    \item 
      For every $a \in L$, $a = \jjoin \jsupp{} a$.
  \end{enumerate}
\end{proposition}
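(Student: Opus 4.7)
My plan is to reduce all five claims to the finite quotients $L/dL$, using the bijection $\cjirr(L) \setminus dL \leftrightarrow \jirr(L/dL)$ from remark~\ref{re:precomp-corresp-pi-d} together with the identification $\widehat{L}/d\widehat{L} = L/dL$ and $\varepsilon_d(L)=\varepsilon_d(\widehat{L})$ from theorem~\ref{th:precomp-dL}. The key lever throughout is that, for a join irreducible $x$, the decomposition $x = (x-\varepsilon_d(L)) \join (x \meet \varepsilon_d(L))$ collapses: either $x \in dL$, or $x = x-\varepsilon_d(L)$. This dichotomy is what transfers join irreducibility cleanly between $L$ and $L/dL$.

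For (1), I would use the explicit description of the bijection in lemma~\ref{le:quotient-fini}: $y' \in \jirr(L/dL)$ lifts to $\min \pi_d^{-1}(\{y'\}) = x - \varepsilon_d(L)$ for any preimage $x$. Since $\varepsilon_d(L)=\varepsilon_d(\widehat{L})$ and the $\ltc$-operations agree on $L \subseteq \widehat{L}$, the lifted element is literally the same in $L$ and in $\widehat{L}$, giving $\cjirr(L) \setminus dL = \cjirr(\widehat{L}) \setminus d\widehat{L}$ for every $d$; taking the union over $d$ and using Hausdorffness (so the $\omega$-ideals vanish) yields (1). For (2), pick $x \in \jirr(L)$ and any $d > \codim x$. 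A short computation shows $\pi_d(x) \in \jirr(L/dL)$: if $\pi_d(x)=a'\join b'$ with lifts $a,b$, then $x \leq a \join b \join \varepsilon_d(L)$ (the maximal preimage described in lemma~\ref{le:quotient-fini}), so join irreducibility of $x$ yields $x \leq a$ or $x \leq b \join \varepsilon_d(L)$, whence $\pi_d(x) \leq a'$ or $\pi_d(x) \leq b'$. The matching element of $\cjirr(L) \setminus dL$ is $x - \varepsilon_d(L)$, which by the key lever equals $x$; so $x \in \cjirr(L)$.

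For (3), the upper bound $\cork_{\cjirr(L)} x \leq \cork_L x = \codim x$ is immediate: $<$ and $\ll$ coincide on $\cjirr(L)$ by remark~\ref{re:ll-join-irr}, and theorem~\ref{th:dim-et-rang} combined with Hausdorffness gives $\cork_L x = \codim x < \omega$. For the lower bound, set $n = \codim x$ and $d > n$. By (2) and corollary~\ref{co:TC-morph-codim}, $\pi_d(x) \in \jirr(L/dL)$ has codimension $n$, so the principal filter $\pi_d(x)\uparrow$ is prime of height exactly $n$ in the finite distributive lattice $L/dL$. A strictly descending chain of prime filters starting there corresponds to a chain $\pi_d(x)=y_0 < \cdots < y_n$ in $\jirr(L/dL)$; lifting through the bijection and invoking the key lever to rule out equalities gives a chain $x = x_0 < \cdots < x_n$ in $\cjirr(L)$. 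Statement (4) follows at once: an infinite strictly ascending chain in $\cjirr(L)$ would be an infinite $\ll$-chain in $L\setminus\{\ZERO\}$, forcing $\cork_L x_0 = +\infty$ and contradicting $\cork_L x_0 = \codim x_0 < \omega$.

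For (5), (4) ensures that $\jsupp{} a$ dominates every join irreducible $\leq a$, hence $a$ is an upper bound of $\jsupp{} a$. I would show it is the least upper bound by contradiction: given $c$ with $\jsupp{} a \subseteq c\downarrow$ and $a-c \neq \ZERO$, set $d=\codim(a-c)<\omega$. In the finite algebra $L/(d+1)L$, $\pi(a) \nleq \pi(c)$, so there exists $y'\in\jirr(L/(d+1)L)$ with $y'\leq \pi(a)$ and $y'\nleq\pi(c)$. Lifting $y'$ via remark~\ref{re:precomp-corresp-pi-d} and reapplying the key lever as in (2) produces $x\in\cjirr(L)$ with $x\leq a$ and $x\nleq c$; ACC then gives $x\leq x^* \in \jsupp{} a$, forcing $x\leq x^*\leq c$, a contradiction. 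The main obstacle I anticipate is the bookkeeping needed to propagate join irreducibility between $L$ and its finite quotients, but the key lever reduces every such step to a single case analysis.
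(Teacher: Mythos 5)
Your proposal is correct, and for points (1)--(4) it is essentially the paper's own proof with the compressed steps written out: the paper cites lemma~\ref{le:quotient-fini}, remark~\ref{re:precomp-corresp-pi-d} and theorem~\ref{th:precomp-dL} and leaves to the reader exactly the details you supply (the dichotomy ``$x\leq\varepsilon_d(L)$ or $x=x-\varepsilon_d(L)$'' for join irreducible $x$, the transfer of codimension through $\pi_d$ via corollary~\ref{co:TC-morph-codim}, the identification of $\codim x'$ with the height of the principal prime filter $x'\uparrow$ in a finite lattice, and the derivation of ACC from $\cork x=\codim x<\omega$ via remark~\ref{re:ll-join-irr} and theorem~\ref{th:dim-et-rang}).

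The one genuine divergence is point (5). The paper argues topologically: by lemma~\ref{le:quotient-fini} the join of the join irreducible elements below $a$ and not below $\varepsilon_d(L)$ is $a-\varepsilon_d(L)$, and since $\varepsilon_d(L)\to\ZERO$, continuity of $x\mapsto a-x$ gives $a-\varepsilon_d(L)\to a$; hence $a$ is the complete join of \emph{all} join irreducible elements below it, and ACC converts this into $a=\jjoin\jsupp{}a$. You instead argue by contradiction inside the order/quotient formalism: if $c$ bounds $\jsupp{}a$ but $a\nleq c$, Hausdorffness makes $d=\codim(a-c)$ finite, so $\pi_{d+1}(a)\nleq\pi_{d+1}(c)$ in the finite quotient $L/(d+1)L$, a join irreducible witness there lifts to some $x\in\cjirr(L)$ with $x\leq a$ and $x\nleq c$, and ACC-domination by $\jsupp{}a$ gives the contradiction. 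Both arguments rest on the same two pillars (lemma~\ref{le:quotient-fini} and $\bigcap_{d<\omega}dL=\{\ZERO\}$) and both are complete; the paper's version is shorter and yields along the way the slightly stronger intermediate fact that $a$ is the complete join of all join irreducibles below it, while yours never invokes convergence or continuity, which makes the dependence on the metric structure disappear entirely from this step. (A cosmetic point only: your notation $x^*$ for a maximal component collides with the paper's dualisation notation and should be renamed.)
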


\begin{proof}
Since $L/dL=\widehat{L}/d\widehat{L}$ for every $d$ and 
$\bigcap_{d<\omega}dL=\{\ZERO\}$, the two first points follow immediately from
lemma~\ref{le:quotient-fini} applied to $\varepsilon_d(L)$ (see
remark~\ref{re:precomp-corresp-pi-d}). For the third point, note
simply that it is true in every finite lattice, because every prime
filter is generated by a completely join irreducible element,  and apply
lemma~\ref{le:quotient-fini} with $\varepsilon=\varepsilon_d(L)$ for any $d$ such that 
$x \nleq \varepsilon_d(L)$. The ascending chain condition follows: every element in
$\cjirr(L)$ has finite corank because it has finite codimension.

For the last point, fix an element $a \in L\setminus\{\ZERO\}$. For every positive
integer $d$, let:
\begin{displaymath}
  a_d= \jjoin\{x \in \jirr(L) \tq x \leq a \hbox{ and }x \nleq \varepsilon_d(L)\}
\end{displaymath}
By lemma~\ref{le:quotient-fini}, $a_d = a - \varepsilon_d(L)$ hence by continuity 
of $x \mapsto a-x$ the sequence $(a_d)_{d<\omega}$ is convergent to $a$. So $a$
is the complete join of all the join irreducible elements of $L \cap  a\downarrow$.
These elements are completely join irreducible, hence by the ascending
chain condition each of them is smaller than a maximal one, which
proves the last point.
\end{proof}

\phantomsection
\addcontentsline{toc}{subsection}%
   {Meet irreducible elements}
\subsection*{Meet irreducible elements}

The case of meet irreducible elements in a precompact Hausdorff
co-Heyting algebra is slightly more complicated. For example they are not
always completely irreducible, contrary to the join irreducible elements 
(see proposition~\ref{pr:meet-non-complete} below). 
\smallskip

In finite distributive lattices there is a correspondence between
(completely) join and meet irreducible elements which is defined as
follows. For every $x \in L$ let:
\begin{displaymath}
  x^\join = \mmeet \{y \in L \tq y \nleq x\}\qquad
  x^\meet = \jjoin \{y \in L \tq x \nleq y\}
\end{displaymath}
Then $x \in \cmirr(L) \Rightarrow x^\join \in \cjirr(L)$ and symmetrically 
$x \in \cjirr(L) \Rightarrow x^\meet \in \cmirr(L)$. These two operations are easily
seen to define reciprocal, order preserving bijections between
$\cmirr(L)$ and $\cjirr(L)$. 

This correspondence generalizes to join complete and meet complete
lattices which satisfy the infinite distributive laws:
\begin{displaymath}
  x \meet \jjoin_{y \in Y} y = \jjoin_{y \in Y} (x \meet y) \qquad 
  x \join \mmeet_{y \in Y} y = \mmeet_{y \in Y} (x \join y)
\end{displaymath}
In particular it holds for profinite lattices, and we take advantage
of this in the following proposition. 

\begin{proposition}\label{pr:precomp-meet-irr}%
  Let $L$ be a precompact Hausdorff co-Heyting algebra.
  \begin{enumerate}
    \item 
      $L$ and $\widehat{L}$ have the same completely meet irreducible
      elements.
    \item 
      $x \mapsto x^\join$ and $x \mapsto x^\meet$ are well-defined, reciprocal, order
      preserving bijections between $\cjirr(L)$ and $\cmirr(L)$.
    \item 
      For every $x \in \cmirr(L)$, the cofoundation rank of $x$ in
      $\cmirr(L)$ is finite. 
    \item 
      $\cmirr(L)$ satisfies the ascending chain condition.
    \item 
      Every element $a \in L$ is the complete meet of $\cmirr(L)\cap a\uparrow$. 
  \end{enumerate}
\end{proposition}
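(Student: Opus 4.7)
The plan is to exploit the profinite completion $\widehat L$, where classical complete-lattice arguments apply, and to transfer the conclusions back to $L$ via theorem~\ref{th:precomp-dL} and remark~\ref{re:precomp-corresp-pi-d}.

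For part~1, first observe that in any Hausdorff precompact co-Heyting algebra the chain $(\varepsilon_d)_{d<\omega}$ has infimum $\ZERO$ (any common lower bound lies in $\bigcap_d dL = \{\ZERO\}$). Hence every $x \in \cmirr(L)$ satisfies $\varepsilon_d \leq x$ for some $d$, and likewise for every element of $\cmirr(\widehat L)$. Remark~\ref{re:precomp-corresp-pi-d}, applied both to $L$ and to $\widehat L$, yields bijections from $\mirr(L/dL)$ onto $\cmirr(L) \cap \varepsilon_d(L)\uparrow$ and onto $\cmirr(\widehat L) \cap \varepsilon_d(\widehat L)\uparrow$. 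Because $L/dL = \widehat L/d\widehat L$ and $\varepsilon_d(L) = \varepsilon_d(\widehat L)$ by theorem~\ref{th:precomp-dL}, both bijections send a meet-irreducible $y' \in \mirr(L/dL)$ to the same element $y \join \varepsilon_d$ (for any preimage $y \in L$), which lies in $L$ and represents the maximum of $\pi_d^{-1}(y')$ in both lattices by lemma~\ref{le:quotient-fini}. Taking unions over $d$ yields $\cmirr(L) = \cmirr(\widehat L)$.

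Part~2 is then nearly immediate: the complete distributive lattice $\widehat L$ is profinite, hence satisfies the infinite distributive laws, and so admits the classical reciprocal, order-preserving bijections $x \mapsto \mmeet\{y \in \widehat L : y \nleq x\}$ and $x \mapsto \jjoin\{y \in \widehat L : x \nleq y\}$ between $\cmirr(\widehat L)$ and $\cjirr(\widehat L)$. By part~1 and proposition~\ref{pr:precomp-join-irr}(1), both images already lie in $L$; being the minimum (resp.\ maximum) of the corresponding subset of $\widehat L$, they are \emph{a fortiori} the minimum of $\{y \in L : y \nleq x\}$ (resp.\ the maximum of $\{y \in L : x \nleq y\}$), and thus coincide with $x^\join$ and $x^\meet$ as defined in the paper. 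Parts~3 and~4 follow by transporting the conclusions of proposition~\ref{pr:precomp-join-irr}(3)--(4) through this order isomorphism: the cofoundation rank of $x$ in $\cmirr(L)$ equals that of $x^\join$ in $\cjirr(L)$, which is finite and equal to $\codim x^\join$, and the ascending chain condition for $\cmirr(L)$ transfers similarly from that for $\cjirr(L)$.

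For part~5, fix $a \in L$. In the finite quotient $L/dL$ the element $\pi_d(a)$ is the meet of the meet irreducibles of $L/dL$ above it; lifting via remark~\ref{re:precomp-corresp-pi-d} produces a finite subset $S_d \subseteq \cmirr(L) \cap a\uparrow$ (each of its elements lying above $\varepsilon_d$) such that $a_d := \mmeet S_d$ satisfies $\pi_d(a_d) = \pi_d(a)$. Any lower bound $b \in L$ of $\cmirr(L) \cap a\uparrow$ then satisfies $\pi_d(b) \leq \pi_d(a_d) = \pi_d(a)$, i.e.\ $b - a \in dL$, for every $d$; since $L$ is Hausdorff it follows that $b - a = \ZERO$, so $b \leq a$. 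Hence $a$ is indeed the complete meet in $L$ of $\cmirr(L) \cap a\uparrow$. The main obstacle throughout is part~1: unlike the join-irreducible case, the formula $\mmeet\{y : y \nleq x\}$ defining $x^\join$ requires completeness that $L$ itself may lack, which forces the detour through $\widehat L$ and a careful comparison of the two correspondences with $\mirr(L/dL)$.
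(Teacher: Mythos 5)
Your proof is correct, and its skeleton is the paper's: identify the completely meet irreducible elements of $L$ and $\widehat L$, then pull the classical profinite correspondence between $\cjirr(\widehat L)$ and $\cmirr(\widehat L)$ back to $L$, and deduce parts 3 and 4 from proposition~\ref{pr:precomp-join-irr} through that order isomorphism. Where you genuinely diverge is in parts 1 and 5. The paper's single key computation is that $\hat a \join \varepsilon_d(\widehat L)$ converges to $\hat a$, so that by lemma~\ref{le:quotient-fini} every $\hat a \in \widehat L$ is the complete meet of the meet irreducible elements above it which dominate some $\varepsilon_d(\widehat L)$; this one decomposition yields part 1 (a completely meet irreducible element must itself dominate some $\varepsilon_d$, hence is isolated and belongs to $L$ by lemma~\ref{le:precomp-isole}, and conversely) and yields part 5 for free. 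You instead obtain the key fact that every completely meet irreducible element dominates some $\varepsilon_d$ directly, by applying complete meet irreducibility to the chain $(\varepsilon_d)_{d<\omega}$, whose infimum is $\ZERO$ by Hausdorffness; you then get $\cmirr(L)=\cmirr(\widehat L)$ by matching the two bijections with $\mirr(L/dL)$ of remark~\ref{re:precomp-corresp-pi-d} (both send $y'$ to $y\join\varepsilon_d$ for any preimage $y\in L$); and you prove part 5 by a separate finitary argument: any lower bound $b$ of $\cmirr(L)\cap a\uparrow$ satisfies $b-a\in dL$ for every $d$, hence $b\leq a$ by Hausdorffness. Both routes are sound. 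The paper's is more economical, since one computation serves parts 1 and 5 simultaneously; yours avoids the direct appeal to lemma~\ref{le:precomp-isole} and, more valuably, makes explicit what ``well-defined'' means in part 2 --- the meets and joins computed in $\widehat L$ land in $L$, are genuine minima/maxima of the corresponding subsets of $L$, and hence agree with the definitions of $x^\join$ and $x^\meet$ relative to $L$ --- a point that the paper's phrase ``$L$ inherits the correspondence from $\widehat L$'' leaves implicit.
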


\begin{proof}
For every element $\hat a$ in $\widehat{L}$ and every positive
integer $d$, lemma~\ref{le:quotient-fini} applied to
$\varepsilon_d(\widehat{L})$ shows that 
$\mirr(\widehat{L}) \cap (\hat a \join \varepsilon_d(\widehat{L}))\uparrow$ is finite,
contained in $\cmirr(\widehat{L})$, and its complete meet is equal to 
$\hat a \join \varepsilon_d(\widehat{L})$. The sequence $\hat a \join \varepsilon_d(\widehat{L})$
is convergent to $\hat a$ hence: 
\begin{displaymath}
  \hat a = \mmeet \{ x \in \mirr(\widehat{L}) \tq \exists d < \omega,\ \varepsilon_d(\widehat{L}) \leq x \}
\end{displaymath}
It follows that if $\hat a$ is completely meet irreducible, it must be
greater than $\varepsilon_d(\widehat{L})$ for some $d$, hence it belongs to $L$ by
lemma~\ref{le:precomp-isole}. Conversely if $a \in L$ is completely meet 
irreducible in $L$ then by the above equality and
lemma~\ref{le:precomp-isole} it must be greater than $\varepsilon_d(L)$ for some
$d$. The filter generated by $\varepsilon_d(L)$ in $\widehat{L}$ is finite by
lemma~\ref{le:quotient-fini} and contained in $L$ by
lemma~\ref{le:precomp-isole} hence $a$ remains completely meet
irreducible in $\widehat{L}$. This proves the first and the last
point.

Since $\cjirr(L)=\cjirr(\widehat{L})$ and
$\cmirr(L)=\cmirr(\widehat{L})$, $L$ inherits from the profinite
lattice $\widehat{L}$ the correspondence between $\cjirr(L)$ and
$\cmirr(L)$. This proves the second point, and the remaining points
then follow from proposition~\ref{pr:precomp-join-irr}.
\end{proof}

Proposition~\ref{pr:precomp-join-irr} shows that the cofoundation rank of any
completely join irreducible inside $\cjirr(L)$ is equal to its
codimension in $L$. There is a symmetric interpretation for the
cofoundation rank in $\cmirr(L)$. 

\begin{proposition}\label{pr:corang-irr}
  Let $L$ be a precompact Hausdorff co-Heyting algebra, and $x \in
  \cmirr(L)$. Let $r$ be its cofoundation rank in $\cjirr(L)$. Then:
  \begin{displaymath}
    \dim_{L^*} x^* = r = \codim_L x^\join 
  \end{displaymath}
\end{proposition}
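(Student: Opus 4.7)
The plan is to exhibit one distinguished prime filter $P$ of $L$ such that each of the three quantities $\dim_{L^*} x^*$, $r$, and $\codim_L x^\join$ equals $\hgt_L(P)$. First I would set $P = \{y \in L : y \nleq x\}$. Upward closure and primeness ($y_1 \join y_2 \nleq x$ forces some $y_i \nleq x$) are immediate; closure under finite meets uses meet irreducibility of $x$; and $\UN \in P$ since $x \neq \UN$. Complete meet irreducibility of $x$ then forces $\bigwedge P \nleq x$: otherwise some $y \in P$ would satisfy $y \leq x$, contradicting the definition of $P$. Combined with the formula $x^\join = \bigwedge\{y : y \nleq x\}$ from Proposition~\ref{pr:precomp-meet-irr}, this shows $x^\join \in P$, whence $P = x^\join\uparrow$.

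Next I would unpack $\dim_{L^*} x^*$ via Stone--Priestley duality. The prime filters of $L^*$ correspond to the prime ideals of $L$; taking complements $\mathfrak{i} \leftrightarrow L \setminus \mathfrak{i}$ then gives an order-reversing bijection with $\Spec L$, so $\cohgt_{L^*}(\mathfrak{i}^*) = \hgt_L(L \setminus \mathfrak{i})$. Moreover $x^* \in \mathfrak{i}^*$ iff $x \in \mathfrak{i}$ iff $x \notin L \setminus \mathfrak{i}$, which by upward-closure of prime filters and the definition of $P$ is equivalent to $L \setminus \mathfrak{i} \subseteq P$. Since $\hgt_L$ is monotone in $\subseteq$ and $P$ is itself a prime filter not containing $x$, the supremum is attained at $P$, yielding $\dim_{L^*} x^* = \hgt_L(P)$. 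Dually, the prime filters of $L$ containing $x^\join$ are precisely those containing $P = x^\join\uparrow$, so by monotonicity the minimum is also attained at $P$: $\codim_L x^\join = \hgt_L(P)$.

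Finally, Proposition~\ref{pr:precomp-meet-irr}(2) says $x \mapsto x^\join$ is an order-preserving bijection $\cmirr(L) \to \cjirr(L)$, hence preserves cofoundation ranks; so $r$ equals $\cork_{\cjirr(L)} x^\join$, which by Proposition~\ref{pr:precomp-join-irr}(3) equals $\codim_L x^\join$. Assembling the three equalities gives $\dim_{L^*} x^* = r = \codim_L x^\join$. The main technical point is step two, where one must check both that the duality-plus-complementation correctly converts cofoundation ranks in $\Spec L^*$ into heights in $\Spec L$, and that $P$ is actually the \emph{maximum} prime filter of $L$ not containing $x$, so that the supremum defining $\dim_{L^*} x^*$ is attained at a concrete prime filter one can compute with.
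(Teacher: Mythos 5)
Your proof is correct, and its final step, $r = \codim_L x^\join$, is exactly the paper's argument: transfer the cofoundation rank through the order isomorphism $y \mapsto y^\join$ of proposition~\ref{pr:precomp-meet-irr}, then apply proposition~\ref{pr:precomp-join-irr} (like the paper, you silently read the statement's $\cjirr(L)$ as $\cmirr(L)$, which is the intended meaning). But your treatment of $\dim_{L^*} x^*$ takes a genuinely different route. The paper computes $\dim_{L^*}x^*$ as the rank of the prime filter generated by $x^*$ in $\Spec L^*$ and then identifies the \emph{whole poset} of prime ideals of $L$ containing $x$ with $\cmirr(L)\cap x\uparrow$: since $x \geq \varepsilon_d(L)$ for some $d$, the quotient $L/(x\downarrow)$ is finite, so lemma~\ref{le:quotient-fini} shows every prime ideal containing $x$ is generated by a completely meet irreducible element, whence that rank is the cofoundation rank $r$ of $x$ in $\cmirr(L)$; this gives $\dim_{L^*}x^* = r$ directly. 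You never need this structural description: you exhibit the single prime filter $P = L\setminus x\downarrow = x^\join\uparrow\cap L$ and show, by monotonicity of foundation ranks, that both $\dim_{L^*}x^*$ and $\codim_L x^\join$ equal $\hgt_L(P)$, then close the cycle through the step both proofs share. Your route is more economical (no appeal to lemma~\ref{le:quotient-fini} or to the finiteness of $L/(x\downarrow)$), and it isolates the fact that the equality $\dim_{L^*}x^* = \codim_L x^\join$ requires nothing beyond the existence of $x^\join$ with the two properties $x^\join \nleq x$ and $x^\join \leq y$ for all $y \nleq x$; the paper's route, by contrast, yields the finer information that the prime ideals above $x\downarrow$ are exactly the $y\downarrow$ for $y \in \cmirr(L)\cap x\uparrow$. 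One small point of care: $\mmeet P$ need not exist in $L$, so the inference ``complete meet irreducibility forces $\mmeet P \nleq x$'' should be run inside $\widehat{L}$, where $x$ remains completely meet irreducible by proposition~\ref{pr:precomp-meet-irr}(1) and where $x^\join$ is actually defined as a complete meet; this yields $x^\join \nleq x$ and $x^\join \leq y$ for every $y\in P$, which is all your argument uses, so the issue is purely one of bookkeeping, not a gap.
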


\begin{proof}
The cofoundation rank of $x$ in $\cjirr(L)$ is the cofoundation rank of
$x^\join$ in $\cmirr(L)$, because the map $y\mapsto y^\join$ from $\cmirr(L)$ to
$\cjirr(L)$ is one-to-one and order preserving. We have seen in
proposition~\ref{pr:precomp-join-irr} that the
latter is the codimension of $x^\join$ in $L$, so the second equality is
proved.

Note that the prime filters of $L^*$ are exactly the sets $\ii^*$
where $\ii$ is a prime ideal of $L$. Since $x$ belongs to
$\cmirr(L)$, we get that $x^*\in\cjirr(L^*)$, hence the dimension of
$x^*$ in $L^*$ is exactly the height of the prime filter generated by
$x^*$ in $L^*$. Now a prime filter $\ii^*$ of $L^*$ contains $x^*$
if and only if the corresponding prime ideal $\ii$ of $L$ contains
$x$. By proposition~\ref{pr:precomp-meet-irr}, $x$ is greater than
$\varepsilon_d(L)$ for some $d$, hence $L/(x\downarrow)$ is finite. Then by
lemma~\ref{le:quotient-fini} every prime ideal of $L$ containing $x$
is generated by a completely meet irreducible element. Since $x \leq y$
if and only if $x\downarrow\subseteq y\downarrow$, it follows that the height of $(x^*)\uparrow$ in
$\Spec L^*$ is exactly the cofoundation rank of $x$ in $\cmirr(L)$. 
\end{proof}

\begin{remark}\label{re:dim-duale}
  One may wonder what are $\dim x$ for $x\in \cjirr(L)$, and
  $\codim_{L^*} y^*$ for $y \in \cmirr(L)$. They do have a good
  behaviour when $L$ and $L^*$ are finite dimensional. However the
  special case of $\cF_n$, the free co-Heyting algebra with $n$
  generators, shows that although $\cF_n$ is bi\--Heyting, these
  notions do not provide any significant information, contrary to the
  codimension. Indeed one can prove that the foundation
  rank of $x$ in $\cjirr(\cF_n)$ is $+\infty$. The cofoundation rank of
  $y^*$ in $\cjirr(\cF_n^*)$ is also the foundation rank of $y$ in
  $\cmirr(\cF_n)$, which is $+\infty$ as well (see \cite{darn-junk-2008},
  comments after lemma~4.1). It follows that: 
  \begin{displaymath}
    \dim_{\cF_n}x=\codim_{\cF_n^*}y^*=+\infty
  \end{displaymath}
  As a consequence of this and propositions~\ref{pr:precomp-join-irr}
  and \ref{pr:precomp-meet-irr}, $\dim_{\cF_n}a$ and
  $\codim_{\cF_n^*}a^*$ are $+\infty$ for every element 
  $a \in \cF_n\setminus\{\ZERO\}$, and $\dim_{\cF_n^*}a^*$ is finite only if $a$
  is a finite meet of completely meet irreducible elements, or
  equivalently if $a \geq \varepsilon_d(\cF_n)$ for some $d$. 
\end{remark}

In every distributive lattice, if an element $x$ is the complete meet
of a set $Y$ of meet irreducible elements such that $Y$ is downward
filtering\footnote{An ordered set $Y$ is downward filtering if for
every $y_1,y_2 \in Y$ there exists $y\in Y$ smaller than $y_1$ and $y_2$.}
then $x$ itself is meet irreducible. Indeed if $x_1 \meet x_2 \leq x$, 
$x_1 \nleq x$ and $x_2 \nleq x$, let $y_1,y_2 \in Y$ such that $x_1 \nleq y_1$ and 
$x_2 \nleq y_2$. The assumption on $Y$ gives $y \in Y$ smaller than $y_1 \meet y_2$.
Then $x_1 \meet x_2 \leq x \leq y$ hence $x_1 \leq y$ or $x_2 \leq y$ (because $y$ is
meet irreducible) so $x_1 \leq y_1$ or $x_2 \leq y$, a contradiction. 

In particular, if $L$ is a precompact Hausdorff co-Heyting algebra,
then the complete meet in $\widehat L$ of any chain of completely
meet irreducible elements is meet irreducible. By Zorn's lemma it
follows that for every $a \in \widehat L$, every element in
$\cmirr(\widehat{L})\cap a\uparrow$ is greater than a minimal one. So the last
point of proposition~\ref{pr:precomp-meet-irr} leads to:

\begin{corollary}
  Let $L$ be a precompact Hausdorff co-Heyting algebra. For every
  $a\in \widehat{L}$, $a = \mmeet \msupp{} a$.
\end{corollary}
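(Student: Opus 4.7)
The plan is to combine proposition~\ref{pr:precomp-meet-irr}(5), applied to $\widehat L$ itself, with the Zorn's lemma argument developed just before the corollary. The key observation is that $\widehat L$ is again a precompact Hausdorff co-Heyting algebra: it is Hausdorff by construction and, being the projective limit of the finite discrete algebras $L/dL$, it is compact, hence its own Hausdorff completion, hence precompact.

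First I apply proposition~\ref{pr:precomp-meet-irr}(5) inside $\widehat L$ to obtain $a = \mmeet (\cmirr(\widehat L) \cap a\uparrow)$. Next, for every $x \in \cmirr(\widehat L) \cap a\uparrow$, the Zorn-style argument preceding the corollary produces a minimal element $y$ of $\mirr(\widehat L) \cap a\uparrow$ with $y \leq x$; by definition $y$ is a meet irreducible component of $a$, that is $y \in \msupp{\widehat L}(a)$. Therefore $\mmeet \msupp{\widehat L}(a) \leq y \leq x$, and taking the complete meet over all such $x$ yields $\mmeet \msupp{\widehat L}(a) \leq a$. The converse inequality is immediate since $a \leq y$ for every $y \in \msupp{\widehat L}(a)$, so the two sides agree.

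The main delicate point is to correctly frame the Zorn argument: one must check that the complete meet of a chain in $\mirr(\widehat L) \cap a\uparrow$, taken in $\widehat L$, is still meet irreducible and still above $a$, so that Zorn's lemma applies within this set and produces the required minimal element. This is exactly what the distributive-lattice lemma recalled just before the corollary provides, since any chain is trivially downward filtering; the formal work of the proof thus reduces to stringing together that lemma, proposition~\ref{pr:precomp-meet-irr}(5), and the two-line inequality above.
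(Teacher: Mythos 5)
Your proof is correct and takes essentially the same route as the paper: the paper's own argument is exactly the downward-filtering lemma plus Zorn's lemma applied to $\mirr(\widehat L)\cap a\uparrow$, combined with the complete-meet representation $a=\mmeet\bigl(\cmirr(\widehat L)\cap a\uparrow\bigr)$ from proposition~\ref{pr:precomp-meet-irr}. Your explicit check that $\widehat L$ is itself precompact Hausdorff (so that the proposition may be applied to $\widehat L$ rather than $L$) is left implicit in the paper, but it is the right justification, and your framing of the Zorn argument on $\mirr(\widehat L)\cap a\uparrow$ rather than on $\cmirr(\widehat L)\cap a\uparrow$ is in fact the careful reading of what the paper intends.
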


We turn now to a characterisation of the meet irreducible elements of $L$. 

\begin{proposition}\label{pr:meet-non-complete}%
  Let $L$ be a precompact Hausdorff co-Heyting algebra.
  \begin{enumerate}
    \item
      An element $a \in L$ is meet irreducible if and only if $\cmirr(L)
      \cap a\uparrow$ is downward filtering.
    \item 
      A meet irreducible element is completely meet irreducible if and
      only if its cofoundation rank in $L$ (with respect to the strict order
      $<$ of $L$) is finite. 
  \end{enumerate}
  In particular if $L$ is not finite then $\ZERO$ is 
  meet irreducible, but not completely meet irreducible.
\end{proposition}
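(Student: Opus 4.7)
My plan addresses part~2 first, then uses it for part~1 and the closing remark. For part~2 direction $(\Rightarrow)$: if $a\in\cmirr(L)$, lemma~\ref{le:precomp-isole} yields $a\geq\varepsilon_d(L)$ for some~$d$, so $a\uparrow\subseteq\varepsilon_d(L)\uparrow$ embeds into the finite lattice $L/dL$ via lemma~\ref{le:quotient-fini}(1), giving $a\uparrow$ finite and hence $a$ of finite cofoundation rank in~$L$. For the converse, let $a$ be meet irreducible with finite cofoundation rank. I first show $a$ has a unique immediate successor $a^+$: two distinct immediate successors $x_1,x_2$ would give $a<x_1\meet x_2<x_1$, the strict inequality on the left by meet irreducibility, contradicting that $x_1$ is an immediate successor. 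Since every interval $[a,b]$ with $b>a$ has finite height, it contains an immediate successor of $a$, necessarily $a^+$; so every $b>a$ satisfies $b\geq a^+$. Complete meet irreducibility then follows: $a=\mmeet A$ with $a\notin A$ would force each $x\in A$ to satisfy $x\geq a^+$, giving $\mmeet A\geq a^+>a$, a contradiction.

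For part~1, direction $(\Leftarrow)$ is immediate from proposition~\ref{pr:precomp-meet-irr}(5) combined with the general lattice fact recalled just before the preceding corollary (a complete meet of a downward filtering set of meet irreducibles is meet irreducible). For $(\Rightarrow)$, given $x_1,x_2\in\cmirr(L)\cap a\uparrow$, the case $a\in\cmirr(L)$ yields $y=a$ as a common lower bound. Otherwise $x_1,x_2>a$, meet irreducibility of $a$ gives $x_1\meet x_2>a$, and I must produce a completely meet irreducible $y\in[a,x_1\meet x_2]$. My plan is to invoke the preceding corollary in $\widehat{L}$: each $x_i$ lies above a minimal element $m_i\in\msupp{}(a)\subseteq\cmirr(L)$, and if $m_1=m_2$ take $y=m_1$. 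When $m_1\neq m_2$, meet irreducibility of $a$ in $L$ together with $a=\mmeet\msupp{}(a)$ and the minimality of the $m_i$ forces $m_1\meet m_2>a$; iterating the corollary at the element $m_1\meet m_2\in L$ together with the identification $\cmirr(L)=\cmirr(\widehat{L})$ from proposition~\ref{pr:precomp-meet-irr}(1) should descend to the required cmir in $[a,x_1\meet x_2]$.

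For the closing remark, precompactness forces a finite dimensional $L$ to be finite (dimension~$d$ gives $(d+1)L=\{\ZERO\}$ hence $L=L/(d+1)L$); so if $L$ is infinite then $\varepsilon_d(L)>\ZERO$ for every~$d$, and the chain $\ZERO<\varepsilon_d(L)<\cdots<\varepsilon_0(L)=\UN$ witnesses that $\ZERO$ has infinite cofoundation rank in~$L$. The meet irreducibility of $\ZERO$ itself is equivalent via part~1 to the downward filtering of $\cmirr(L)$, which the construction of the $(\Rightarrow)$ direction above produces uniformly; part~2 then contrapositively yields that $\ZERO$ is not completely meet irreducible. The main obstacle I anticipate is the $(\Rightarrow)$ direction of part~1: the natural candidate $x_1\meet x_2$ need not itself be completely meet irreducible, and complete meets formed in $\widehat{L}$ may fall outside~$L$, so one must exploit the minimality condition in $\msupp{}(\cdot)$ carefully to remain inside $\cmirr(L)$.
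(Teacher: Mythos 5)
Your part~1 $(\Leftarrow)$ and part~2 are largely fine, but the $(\Rightarrow)$ direction of part~1 --- which is the real content of this proposition --- has a genuine gap. Your plan rests on the inclusion $\msupp{}(a)\subseteq\cmirr(L)$, and this is unjustified and in general false: the elements of $\msupp{}(a)$ are by definition minimal elements of $\mirr(\widehat{L})\cap a\uparrow$, produced in the corollary following proposition~\ref{pr:precomp-meet-irr} by a Zorn's lemma argument whose lower bounds are complete meets in $\widehat{L}$ of chains of (completely) meet irreducible elements; such meets are in general only \emph{meet} irreducible, need not be completely meet irreducible, and need not even belong to $L$. So already the step ``if $m_1=m_2$ take $y=m_1$'' fails, and ``iterating the corollary'' supplies no mechanism for descending into $\cmirr(L)$ --- which is precisely the obstacle you flag at the end without resolving. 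The paper's proof uses a device absent from your proposal: the reciprocal order preserving bijections $x\mapsto x^\join$ and $x\mapsto x^\meet$ between $\cmirr(L)$ and $\cjirr(L)$ (proposition~\ref{pr:precomp-meet-irr}), combined with the decomposition of every element into join irreducible components (proposition~\ref{pr:precomp-join-irr}). Given $y_1,y_2\in\cmirr(L)\cap a\uparrow$, one has $y_i^\join\nleq a$ (since $y_i^\join\nleq y_i$ while $a\leq y_i$), hence $y_1^\join\meet y_2^\join\nleq a$ by meet irreducibility of $a$; picking a join irreducible component $x$ of $y_1^\join\meet y_2^\join$ with $x\nleq a$, the element $x^\meet$ lies in $\cmirr(L)$, satisfies $a\leq x^\meet$ (because $x\nleq a$) and $x^\meet\leq y_i$ (because $x\leq y_i^\join$): this is the required common lower bound.

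The rest compares as follows. Part~1 $(\Leftarrow)$ is exactly the paper's argument. Part~2 $(\Rightarrow)$ is also the paper's, except that the fact that a completely meet irreducible element lies above some $\varepsilon_d(L)$ is not given by lemma~\ref{le:precomp-isole} alone (that lemma presupposes the element is isolated); it is established in the proof of proposition~\ref{pr:precomp-meet-irr}(1), which is what the paper cites. Your part~2 $(\Leftarrow)$ --- unique immediate successor $a^+$, every $b>a$ lies above $a^+$ --- is correct and genuinely more elementary than the paper's route (which uses part~1 plus finiteness of $\cmirr(L)\cap a\uparrow$ to exhibit a least element of that set, equal to $a$); note only that $\cmirr$ is defined in the paper by the prime condition $\mmeet A\leq a\Rightarrow\exists y\in A,\ y\leq a$, so you should still pass from ``$a=\mmeet A$ forces $a\in A$'' to that condition, e.g.\ by replacing $A$ with $\{a\join y\tq y\in A\}$ and using that the map $y\mapsto a\join y$ preserves existing infima in a co-Heyting algebra. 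Finally, your treatment of the closing sentence is circular: the downward filtering of $\cmirr(L)=\cmirr(L)\cap\ZERO\uparrow$ cannot be obtained from ``the construction of the $(\Rightarrow)$ direction'', since that construction takes the meet irreducibility of $a=\ZERO$ as a hypothesis. Your computation that $\cork_L\ZERO$ is infinite when $L$ is infinite is fine and shows, via part~2, that $\ZERO$ is not completely meet irreducible \emph{provided} it is meet irreducible; but the meet irreducibility of $\ZERO$ would need an independent argument, which neither you nor, for that matter, the paper's own proof supplies.
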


\begin{proof}
Since $a$ is the complete meet of $\cmirr(L)\cap a\uparrow$, if this set
is downward filtering then $a$ is meet irreducible by the above
general argument. Conversely assume that $a$ is meet irreducible. Let
$y_1,y_2 \in \cmirr(L)\cap a\uparrow$ and $b_i = a \join y_i^\join$. By definition $y_i^\join
\nleq a$ since $a \leq y_i$, hence $y_1^\join \meet y_2^\join \nleq a$ (because $a$ is meet
irreducible). So by proposition~\ref{pr:precomp-meet-irr} there is a
join irreducible component $x$ of $y_1^\join \meet y_2^\join$ which is not smaller
than $a$. By construction $x^\meet \leq y_i$ because $x \leq y_i^\join$ (and
$y_i=y_i^{\join\meet}$). Moreover $a \leq x^\meet $ because $x \nleq a$ (by definition
of $x^\meet$). So $x \in \cmirr(L)\cap a\uparrow$ and the first point is proved. 

Assume now that $a$ meet irreducible. If its cofoundation rank in $L$ is
finite then $\cmirr(L)\cap a\uparrow$ is finite. Since it is downward
filtering it must have a smallest element, hence $a$ is completely
meet irreducible. Conversely if $a$ is completely meet irreducible,
then it is greater than $\varepsilon_d(L)$ for some $d$ (see the proof of the
first point of proposition~\ref{pr:precomp-meet-irr}). But
$\varepsilon_d(L)\uparrow$ is finite by lemma~\ref{le:quotient-fini} (because $L/dL$ is
finite) hence so is $a\uparrow$.
\end{proof}

It was proven in \cite{bell-1986} that in free finitely generated
co-Heyting algebras every join irreducible element is meet irreducible.
This obviously does not hold for finite co-Heyting algebras, hence it
does not generalize to precompact Hausdorff ones.

\phantomsection
\addcontentsline{toc}{subsection}%
   {The smallest dense subalgebra}
\subsection*{The smallest dense subalgebra}

\begin{proposition}\label{pr:precomp-skeleton}
  Let $L$ be a precompact Hausdorff co-Heyting algebra. Then
  $\cjirr(L)$ and $\cmirr(L)$ generate the same $\ltc$\--substructure
  of $L$, which is also the smallest $\ltc$\--substructure dense in
  $L$ (with respect to the codimetric topology). 
\end{proposition}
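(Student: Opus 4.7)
The plan is to set $S := \langle \cjirr(L)\rangle$ and $T := \langle \cmirr(L)\rangle$ (the $\ltc$-substructures they generate), then prove in parallel that (a) both $S$ and $T$ are dense in $L$, and (b) every dense $\ltc$-substructure of $L$ contains $\cjirr(L) \cup \cmirr(L)$. From (a) and (b) one gets $S \supseteq T$ (since $T$ is dense and hence contains $S$'s generators\ldots applied symmetrically, $T \supseteq S$), so $S = T$, and this common substructure is the smallest dense one.

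For density of $S$, I would recycle the proof of proposition~\ref{pr:precomp-join-irr}: for each $a \in L$ the element $a - \varepsilon_d(L)$ equals the finite join of $\{x \in \jirr(L) \tq x \leq a,\ x \nleq \varepsilon_d(L)\}$, a subset of $\cjirr(L)$ by remark~\ref{re:precomp-corresp-pi-d}, and this sequence converges to $a$. Dually, applying lemma~\ref{le:quotient-fini} to $\varepsilon_d(L)$ identifies $\cmirr(L) \cap \varepsilon_d(L)\uparrow$ with $\mirr(L/dL)$; since $L/dL$ is finite distributive, every element of $\varepsilon_d(L)\uparrow$ is a finite meet of elements of $\cmirr(L) \cap \varepsilon_d(L)\uparrow$, so $a \join \varepsilon_d(L) \in T$ and converges to $a$.

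The crux of (b) is showing that $\varepsilon_d(L)$ lies in every dense substructure $L'$, for which I would prove that $\varepsilon_d(L)$ is isolated in $L$. By corollary~\ref{co:epsilon-ll}, $\varepsilon_{d+1}(L) \ll \varepsilon_d(L)$, so
\begin{displaymath}
  \varepsilon_d(L) - \varepsilon_{d+1}(L) \;=\; \varepsilon_d(L) \;=\; \varepsilon_d(L) \join \varepsilon_{d+1}(L),
\end{displaymath}
and by lemma~\ref{le:quotient-fini} the minimal and maximal preimages of $\pi_{d+1}(\varepsilon_d(L))$ therefore coincide with $\varepsilon_d(L)$, forcing the fiber $\pi_{d+1}^{-1}(\pi_{d+1}(\varepsilon_d(L)))$ --- which is exactly the basic open neighborhood $U(\varepsilon_d(L), d+1)$ from~(\ref{eq:Uxd}) --- to reduce to $\{\varepsilon_d(L)\}$. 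This isolation step is the one I expect to be the main obstacle, since everything afterwards depends on producing the $\varepsilon_d(L)$'s inside any dense substructure.

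Once the $\varepsilon_d(L)$'s are in $L'$, the lifting is routine. Given $x \in \cjirr(L)$, choose $d > \codim x$ so that $x \nleq \varepsilon_d(L)$; remark~\ref{re:ll-join-irr} gives $x - \varepsilon_d(L) = x$, so $x$ is the smallest preimage of $\pi_d(x)$ by lemma~\ref{le:quotient-fini}. Density of $L'$ combined with discreteness of $L/dL$ (remark~\ref{re:dim-finie}) produces some $a \in L'$ with $\pi_d(a) = \pi_d(x)$, whence $x = a - \varepsilon_d(L) \in L'$. Dually, for $y \in \cmirr(L)$ I would pick $d$ with $\varepsilon_d(L) \leq y$ (available by proposition~\ref{pr:precomp-meet-irr}); then $y$ is the largest preimage of $\pi_d(y)$, and for any $a \in L' \cap \pi_d^{-1}(\pi_d(y))$ one has $y = a \join \varepsilon_d(L) \in L'$. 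This gives $\cjirr(L) \cup \cmirr(L) \subseteq L'$, completing (b) and hence the proof.
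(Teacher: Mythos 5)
Your proof is correct --- every step checks out --- but it is organized differently from the paper's. The paper proves $L^\join = L^\meet$ by direct mutual inclusion: every element above some $\varepsilon_d(L)$ lies in both substructures (finiteness of $\jsupp{}x$ and of $\mirr(L)\cap x\uparrow$, plus the complete decompositions of propositions~\ref{pr:precomp-join-irr} and~\ref{pr:precomp-meet-irr}), and by lemma~\ref{le:precomp-isole} these elements are exactly the isolated points of $L$; since every element of $\cmirr(L)$ is isolated one gets $L^\meet\subseteq L^\join$, and since $x=(x\join\varepsilon_d(L))-\varepsilon_d(L)$ with both operands isolated one gets $L^\join\subseteq L^\meet$. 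The common substructure is then recognized as the one generated by \emph{all} isolated points, which makes minimality immediate (an isolated point belongs to every dense subspace) and density easy ($x\join\varepsilon_d(L)\to x$). You instead prove density of $S$ and $T$ separately and show that every dense substructure contains all of $\cjirr(L)\cup\cmirr(L)$, deriving $S=T$ only as a corollary; this forces you to supply one ingredient the paper never needs, namely lifting a \emph{non-isolated} $x\in\cjirr(L)$ into an arbitrary dense $L'$, which you do correctly by observing that the fiber $\pi_d^{-1}(\pi_d(x))=U(x,d)$ is open, picking $a\in L'$ inside it, and recovering $x=a-\varepsilon_d(L)$ as the minimal preimage via lemma~\ref{le:quotient-fini}. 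Conversely, the step you feared most --- isolation of $\varepsilon_d(L)$ --- is not an obstacle: your fiber computation from $\varepsilon_{d+1}(L)\ll\varepsilon_d(L)$ is a correct, self-contained special case of lemma~\ref{le:precomp-isole} combined with theorem~\ref{th:precomp-dL}, which you could simply have cited. Two minor points: in the density of $T$, lifting finite meet decompositions from $L/dL$ uses that the maximal-preimage section $a\mapsto a\join\varepsilon_d(L)$ preserves meets (one line of distributivity, $(a\join\varepsilon)\meet(b\join\varepsilon)=(a\meet b)\join\varepsilon$, or quote the first paragraph of the proof of proposition~\ref{pr:precomp-meet-irr}); and the fact that completely meet irreducible elements lie above some $\varepsilon_d(L)$ indeed comes from the \emph{proof}, not the statement, of that proposition. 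Net comparison: the paper's route is more economical and yields the bonus characterization of $L^\join$ as the substructure generated by the isolated points, while yours avoids lemma~\ref{le:precomp-isole} and establishes explicitly the stronger-sounding fact that every dense substructure contains every completely join or meet irreducible element.
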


\begin{proof}
Let $L^\join$ (resp. $L^\meet$) be the $\ltc$\--substructure of $L$  generated
by $\cjirr(L)$ (resp. $\cmirr(L)$). 

Note that if an element $x$ of $L$ is greater than $\varepsilon_d(L)$ for some
$d$ then $\jsupp{}x$ is contained in $\cjirr(L)\setminus(d+1)L$ hence is
finite by remark~\ref{re:precomp-corresp-pi-d}. Moreover:
\begin{displaymath}
  \mirr(L)\cap x\uparrow \subseteq \mirr(L) \cap \varepsilon_d(L)\uparrow = \cmirr(L) \cap \varepsilon_d(L)\uparrow
\end{displaymath}
so $\mirr(L)\cap x\uparrow$ is finite also and contained in $\cjirr(L)$. It follows
that every isolated point of $L$, and in particular every $\varepsilon_d(L)$, 
belongs both to $L^\join$ and $L^\meet$. 

In particular $\cmirr(L) \subset L^\join$ hence  $L^\meet \subseteq L^\join$. Conversely if $x$
is any completely join irreducible element of $L$ and $d > \codim x$
then $x = x - \varepsilon_d(L)$. Because $\varepsilon_d(L)$ and $x - \varepsilon_d(L)$ are isolated
they belong to $L^\meet$, so:
\begin{displaymath}
  x = x - \varepsilon_d(L) = (x \join \varepsilon_d(L)) - \varepsilon_d(L) \in L^\meet
\end{displaymath}
It follows that $L^\join = L^\meet$ is also the $\ltc$\--substructure
generated by the set of isolated points, hence it is contained in
every dense $\ltc$\--substructure of $L$. Conversely every $x \in L$ is
the limit of $(x \join \varepsilon_d(L))_{d<\omega}$ which is a sequence of isolated
points, hence $L^\join$ is dense in $L$.
\end{proof}

\begin{proposition}
  Given a precompact Hausdorff co-Heyting algebra $L$, and $L^\join$ its
  smallest dense subalgebra, the following conditions are equivalent:
  \begin{enumerate}
    \item
      $\widehat{L}=L^\join$.
    \item 
      $\widehat{L}$ is countable or finite.
    \item 
      There is no infinite antichain in $\cjirr(L)$.
    \item 
      There is no infinite antichain in $\cmirr(L)$.
  \end{enumerate}
\end{proposition}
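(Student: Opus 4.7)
The plan is to prove the cyclic implications $(1) \Rightarrow (2) \Rightarrow (3) \Leftrightarrow (4) \Rightarrow (1)$. The equivalence $(3) \Leftrightarrow (4)$ is immediate from proposition~\ref{pr:precomp-meet-irr}: the map $x \mapsto x^\join$ is an order-preserving bijection $\cmirr(L) \to \cjirr(L)$, so carries antichains to antichains.

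For $(1) \Rightarrow (2)$ I would show that $L^\join$ is always at most countable, independently of any hypothesis. By proposition~\ref{pr:precomp-join-irr} every completely join irreducible element has finite codimension, so $\cjirr(L) = \bigcup_{d<\omega}(\cjirr(L)\setminus dL)$; by remark~\ref{re:precomp-corresp-pi-d} each summand is finite, hence $\cjirr(L)$ is finite or countable, and so is the substructure it generates in a finitary language.

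For $(2) \Rightarrow (3)$ I would argue contrapositively. Given an infinite antichain $\{x_n\}_{n<\omega}$ in $\cjirr(L) = \cjirr(\widehat{L})$, and any $S\subseteq\omega$, define $a_S = \jjoin_{n\in S} x_n$ computed in $\widehat{L}$, which is legitimate because $\widehat{L}$ is profinite, hence a complete lattice. The map $S \mapsto a_S$ is injective: if $n \in S\setminus T$ satisfied $a_S=a_T$, complete join irreducibility of $x_n$ in $\widehat{L}$ would yield $m\in T$ with $x_n \leq x_m$, forcing $n=m$ against the antichain assumption. Hence $|\widehat{L}| \geq 2^{\aleph_0}$.

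For $(3) \Rightarrow (1)$ I would combine (3) with the ascending chain condition on $\cjirr(L)$ from proposition~\ref{pr:precomp-join-irr}. Every subset of $\cjirr(L)$ has its maximal elements forming an antichain, hence (by (3)) finite; ACC then forces every element of the subset to lie below one of these maxima. Applied to $\cjirr(\widehat{L})\cap a\downarrow$ for an arbitrary $a\in\widehat{L}$, this shows $\jsupp{\widehat{L}}(a)$ is finite. Since $\widehat{L}$ is itself precompact Hausdorff, proposition~\ref{pr:precomp-join-irr} gives $a = \jjoin \jsupp{\widehat{L}}(a)$, a \emph{finite} join of elements of $\cjirr(L)$, so $a \in L^\join$ and $\widehat{L}=L^\join$.

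The main obstacle is the cardinality argument for $(2) \Rightarrow (3)$: one must simultaneously exploit the completeness of $\widehat{L}$ to form the joins $a_S$ and the identification $\cjirr(L) = \cjirr(\widehat{L})$ to guarantee they are pairwise distinct. Everything else reduces cleanly to previously established structural facts about join irreducibles in precompact Hausdorff co-Heyting algebras.
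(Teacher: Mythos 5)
Your proof is correct and follows essentially the same route as the paper's: the cycle $(1)\Rightarrow(2)\Rightarrow(3)\Rightarrow(1)$ together with $(3)\Leftrightarrow(4)$ via the bijection of proposition~\ref{pr:precomp-meet-irr}, with the uncountability coming from complete joins $\jjoin_{n\in S}x_n$ of subsets of an infinite antichain in the complete lattice $\widehat{L}$, and $(3)\Rightarrow(1)$ from the finiteness of the antichain of join irreducible components combined with $a=\jjoin\jsupp{}a$. You merely spell out two details the paper leaves implicit (the injectivity of $S\mapsto a_S$ via complete join irreducibility, and the countability of $L^\join$ as the subalgebra generated by the countable set $\cjirr(L)$), which is a welcome but not substantively different elaboration.
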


\begin{proof}
The equivalence of the two last conditions follows immediately from the
one-to-one, order preserving correspondence between $\cjirr(L)$ and
$\cmirr(L)$ (see proposition~\ref{pr:precomp-meet-irr}). If there is
an infinite antichain $(x_i)_{i<\omega}$ in $\cjirr(L)$ then for every
subset $I$ of $\NN$, the complete join $x_I$ of $(x_i)_{i \in I}$ belongs
to $\widehat L$ since $\widehat L$ is join complete. These elements are two
by two distinct hence $\widehat L$ is uncountable. Conversely, note
that the join irreducible components of any element form an antichain,
and $\cjirr(L)=\cjirr(\widehat{L})$. So the third condition implies
that $\widehat{L}=L^\join$ which is obviously countable our finite. 
\end{proof}

If $L$ is a precompact Hausdorff co-Heyting algebra such that $L \neq
L^\join$ then obviously $L \not\simeq L^\join$ (because the latter does not contain
a proper dense subalgebra). Because of the density of $L^\join$ in $L$,
both of them satisfy the same identities, an argument that we will
re-use and develop in section~\ref{se:completion}. Does it happen that
$\widehat L \neq L^\join$ but $\widehat L \equiv L^\join$? Our guess is no. But the
analogy with the model theory of the ring $\ZZ_p$ of $p$\--adic numbers
(which is both the completion of $\ZZ$ with respect to the $p$\--adic
ultrametric distance, and the projective limit of all the quotients
$\ZZ/p^d\ZZ$) suggests the following questions.

\begin{question}
  Is the existential closure of $L^\join$ inside $\widehat L$ an
  elementary substructure of $L$~?
\end{question}

\begin{question}
  When $L$ is finitely presented, is $L$ the existential closure of
  $L^\join$ inside $\widehat L$?
\end{question}

In \cite{darn-junk-2008} it was proven that if the free co-Heyting
algebra $\cF_n$ with $n$ generators is elementarily equivalent to
$\widehat\cF_n$ then $\cF_n\preccurlyeq\widehat{\cF_n}$. More
generally, does this hold for every precompact Hausdorff co\--Heyting
algebra?

\section{Hausdorff completion}
\label{se:completion}

Since the Hausdorff completion $L'$ of a co-Heyting
algebra $L$ is the completion of $L/\omega L$ we can assume 
w.l.o.g.\ that $L$ is Hausdorff. We identify $L$ with its image in $L'$ and
consider it as a dense subset of $L'$. By
proposition~\ref{pr:term-lipshitz} the $\ltc$\--functions $\join,\meet,-$ are
continuous on $L \times L$. Their unique continuous extension to $L' \times L'$
defines an $\ltc$\--structure on $L'$. Moreover for any two
$\ltc$\--terms $t_1,t_2$ with $n$ free variables, if the corresponding
functions coincide on $L^n$ then by continuity (and density inside
$L'^n$) they coincide on $L'^n$. So every equation $t_1(x)=t_2(x)$
valid on the whole of $L^n$ remains valid on $L'^n$. Since the class
of all co-Heyting algebras is a variety, it can be axiomatized by
equations. It follows that $L'$ with this $\ltc$\--structure is a
co-Heyting algebra. It is another story to prove that the pseudometric
$\dist{L'}$ is precisely the native metric of $L'$, as we will do now.

\begin{theorem}\label{th:complete}
  Let $L$ be a Hausdorff co-Heyting algebra. Let $(L', \dist{}')$ 
  be the completion of the metric space $(L,\dist{L})$. Then $L'$ 
  is a Hausdorff co-Heyting algebra, and $\dist{}'$ is exactly the 
  ultrametric $\dist{L'}$. 
  Moreover for every positive integer $d$, $L'/dL'=L/dL$ and (using
  remark~\ref{re:identification-quotient}) $\pi_{dL}$ is the restriction
  of $\pi_{dL'}$ to $L$.
\end{theorem}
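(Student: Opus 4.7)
The first assertion---that $L'$ is a co-Heyting algebra---is essentially proved in the paragraph immediately preceding the theorem: by Proposition \ref{pr:term-lipshitz} the operations $\join, \meet, -$ are $1$-Lipschitz on $L$, hence extend uniquely and continuously to $L' \times L' \to L'$, and the defining identities of the variety of co-Heyting algebras transfer from $L$ to $L'$ by density of $L$ and continuity. I would simply recall this.

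The plan for the remaining assertions is to work through the induced family of quotient maps. For each positive integer $d$, the projection $\pi_d := \pi_{dL}: L \to L/dL$ is a $1$-Lipschitz $\ltc$-morphism by Proposition \ref{pr:morph-lipshitz}, and $L/dL$, being finite dimensional, has discrete codimetric topology (Remark \ref{re:dim-finie}) hence is complete as a metric space. The universal property of the Hausdorff completion then gives a unique continuous extension $\bar\pi_d: L' \to L/dL$; by density of $L$ in $L'$ and continuity of the extended $\ltc$-operations, $\bar\pi_d$ is in fact a surjective $\ltc$-morphism that restricts to $\pi_d$ on $L$. The core of the proof is then to establish the identification $\Ker \bar\pi_d = dL'$. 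One inclusion is immediate from Corollary \ref{co:TC-morph-codim}(\ref{it:phi-de-dL})(b) applied to $\bar\pi_d$: since $\dim(L/dL) < d$ by Remark \ref{re:dim-L/dL}, we obtain $dL' \subseteq \Ker \bar\pi_d$. The reverse inclusion is the principal obstacle. Given $a' \in \Ker \bar\pi_d$ and a Cauchy sequence $(a_n) \to a'$ in $L$ (for $\dist{}'$), discreteness of $L/dL$ together with $\pi_d(a_n) \to \bar\pi_d(a') = \ZERO$ forces $a_n \in dL$ for $n$ large. To conclude that $a' \in dL'$ I would invoke Theorem \ref{th:dim-et-rang} and assemble an explicit chain $a' \leq x_d \ll \cdots \ll x_0$ in $L' \setminus \{\ZERO\}$: the chains $a_n \leq y_{d,n} \ll \cdots \ll y_{0,n}$ exist in $L$ for $n \geq N$, and the task reduces to producing a coherent chain in $L'$ out of them via diagonalization and the compatibility of the projections $(\pi_{e,e+1})_e$ which control the $y_{i,n}$'s modulo each $eL$. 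This matching of chains is the subtle step, and is where I expect the main technical work to lie.

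Once $\Ker \bar\pi_d = dL'$ is established, Remark \ref{re:identification-quotient} applied to $\bar\pi_d$ identifies $L'/dL'$ with $L/dL$ in such a way that $\pi_{dL'}$ restricts to $\pi_{dL}$ on $L$. For the metric equality, observe that by construction the $2^{-d}$-ball around $\ZERO$ in $(L',\dist{}')$ coincides with $\Ker \bar\pi_d$: indeed, since $\dist{L}$ takes values in $\{0\} \cup \{2^{-k}\}_{k<\omega}$, having $\lim_n \dist{L}(a_n, \ZERO) \leq 2^{-d}$ is equivalent to $a_n \in dL$ eventually, which in turn is equivalent to $\bar\pi_d(a') = \ZERO$. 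This ball equals the $2^{-d}$-ball around $\ZERO$ in $\dist{L'}$, namely $dL'$. Since $\dist{}'(a', b') = \dist{}'(a' \bigtriangleup b', \ZERO)$ and similarly for $\dist{L'}$, and both are ultrametrics with values in $\{0\} \cup \{2^{-d}\}_{d<\omega}$, the equality $\dist{}' = \dist{L'}$ follows on all of $L' \times L'$.
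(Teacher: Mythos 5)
Your reduction is sound as far as it goes, and it follows the same skeleton as the paper: extend the operations and each $\pi_{dL}$ continuously to $L'$, observe that $dL \subseteq dL' \subseteq \Ker \overline{\pi}_d = \overline{dL}$ (the first inclusion because $L \hookrightarrow L'$ is an $\ltc$\--morphism, the second from corollary~\ref{co:TC-morph-codim} since $\dim L/dL < d$), and then recover $\dist{}'=\dist{L'}$ by comparing balls at $\ZERO$ via $\dist{}'(a',b')=\dist{}'(a'\bigtriangleup b',\ZERO)$. But the entire content of the theorem lies in the one inclusion you defer: $\Ker\overline{\pi}_d=\overline{dL} \subseteq dL'$. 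Your proposal does not prove it; it names a strategy (assemble a chain $a' \leq x_d \ll \cdots \ll x_0$ in $L'$ by ``diagonalization'' from the chains $a_n \leq y_{d,n} \ll \cdots \ll y_{0,n}$) and explicitly leaves the ``matching of chains'' as expected technical work. That is a genuine gap, and as sketched the strategy is doubtful: theorem~\ref{th:dim-et-rang} produces, for each $n$ separately, \emph{some} chain above $a_n$, but these choices are completely unrelated as $n$ varies, nothing forces them to be compatible with the projections $\pi_{e,e+1}$, and since $L$ is only assumed Hausdorff --- not precompact --- no compactness is available to extract convergent subsequences of the $y_{i,n}$, so diagonalization has nothing to bite on.

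What fills this gap in the paper is a different mechanism: an induction on $d$, so that only the \emph{single} element immediately above $a'$ has to be produced coherently, the rest of the chain being supplied by the induction hypothesis $\overline{(d-1)L}=(d-1)L'$. Concretely, take $(a_n)$ in $dL$ converging to $a'$, pass to a subsequence with $\codim (a_n \bigtriangleup a_{n+1}) \geq n+1$, and pick $x_n \in nL$ with $a_n \bigtriangleup a_{n+1} \ll x_n$. Then build the dominating sequence by hand: choose $b_d \in (d-1)L$ with $a_d \ll b_d$, and set $b_{n+1}=b_n \join x_n$. The co-Heyting identity $a_{n+1}=(a_{n+1}-a_n)\join(a_{n+1}\meet a_n)$ gives $a_{n+1}\ll b_{n+1}$, because $a_{n+1}\meet a_n \leq a_n \ll b_n \leq b_{n+1}$ and $a_{n+1}-a_n \leq a_n \bigtriangleup a_{n+1} \ll x_n \leq b_{n+1}$; the sequence $(b_n)$ stays in the ideal $(d-1)L$ and is Cauchy since $b_n \bigtriangleup b_{n+1} \leq x_n$; so it converges to some $b' \in \overline{(d-1)L}=(d-1)L'$ by the induction hypothesis. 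The relations $a_n \join b_n = b_n$ and $b_n - a_n = b_n$ pass to the limit, giving $a' \ll b'$, whence $\codim_{L'} a' \geq d$ by theorem~\ref{th:dim-et-rang}. In short, the Cauchy sequence of dominating elements is \emph{constructed}, not extracted by a compactness or diagonal argument --- that construction is exactly what your proposal is missing.
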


It is worthwhile to notice, before starting the proof, that the
``triangle inequality'' for $\bigtriangleup$ (see section~\ref{se:preprequesites})
implies that $\dist{L}$ is an {\em ultrametric}:
\begin{displaymath}
  \dist{L}(a,c) \leq \max \dist{L}(a,b),\dist{L}(b,c)
\end{displaymath}
It follows that a sequence $(x_n)_{n<\omega}$ is Cauchy if and only if 
$\dist{L}(x_n,x_{n+1})$ is convergent to $0$. 

\begin{proof}
Note that $\dist{L}(a,b)=\dist{L}(a \bigtriangleup b, \ZERO)$ for every $a,b \in
L$. By density it follows that:
\begin{equation}
  \label{eq:dbar-delta}
  \forall a',b' \in L',\quad \dist{}'(a',b')=\dist{}'(a' \bigtriangleup b', \ZERO)
\end{equation}
In order to show that $\dist{}'=\dist{L'}$ it is then sufficient to
check that they define the same balls centered at $\ZERO$. Since
$\dist{}'$ extends $\dist{L}$ and $L$ is dense in $L'$, the ball of
radius $2^{-d}$ and center $\ZERO$ for $\dist{}'$ is precisely the
closure $\overline{dL}$ of $dL$ in $L'$ with respect to $\dist{}'$. So
it suffices to check\footnote{Here we use that both $\dist{}'$ and
$\dist{L}$ take their values in $\{2^{-d}\}_{d<\omega}\cup\{0\}$. Indeed by the
ultrametric triangle inequality, for every $a' \neq b' \in L'$,
$\dist{}'(a',b')=\dist{}'(a,b)$ for some (any) $a,b \in L$ close enough
to $a',b'$.} that $dL'=\overline{dL}$ for every positive integer $d$.

The codimetric topology on $L/dL$ is discrete by
remark~\ref{re:dim-finie} so the metric of $L/dL$ is complete.
Moreover, by proposition~\ref{pr:morph-lipshitz}, $\pi_d$ is continuous.
Hence $\pi_d$ extends uniquely to a continuous map $\overline{\pi_d}:L' \to
L/dL$ which is an $\ltc$\--morphism by the same arguments as above
($\pi_d$ preserves $\ltc$\--equations hence so does $\overline{\pi_d}$ by
continuity). The kernel of $\overline{\pi_d}$ is the closure of $\Ker
\pi_d$, that is of $dL$, in $L'$ with respect to $\dist{}'$. This
morphism is surjective and $\dim L/dL < d$ so the points
(\ref{it:morph-lipshitz}) and (\ref{it:phi-de-dL}) of
corollary~\ref{co:TC-morph-codim} give us:
\begin{displaymath}
  dL \subseteq dL' \subseteq \overline{dL}
\end{displaymath}
Conversely let $a' \in \overline{dL}$. We show by induction on $d$ that
$a' \in dL'$.

If $d=0$ this is obvious since $0L'=L'$. So let us assume that $d \geq 1$
and $(d-1)L'$ is closed with respect to $\dist{}'$. Let $(a_n)_{n<\omega}$
be a sequence of elements of $dL$ converging to $a'$ with respect to
$\dist{}'$. Then $\dist{}'(a_n,a_{n+1})$ is convergent to 0 hence so
does $\dist{L}(a_n,a_{n+1})$, as $\dist{}'$ and $\dist{L}$
coincide on $L$. We may assume that $\codim a_n \bigtriangleup a_{n+1} \geq n+1$
for every $n$, by taking a subsequence of $(a_n)_{n<\omega}$ if necessary.
So by theorem~\ref{th:dim-et-rang} we can find $x_n \in nL$ such that
$a_n \bigtriangleup a_{n+1} \leq x_n$.

Since $a_d \in dL$ we can find $b_d \in (d-1)L$ such that $a_d \ll b_d$. 
For every $k \leq d$ let $b_k=b_d$. Assume that for some $n \geq d$ we have 
constructed a sequence $b_0,\dots,b_d$ of elements of $(d-1)L$ so that 
$a_n \ll b_n$ and $b_{n-1} \bigtriangleup b_n \in nL$.
Let $b_{n+1}=b_n \join x_{n+1}$. Since $x_{n+1} \in (n+1)L \subseteq (d-1)L$, by
construction $b_{n+1} \in (d-1)L$. Moreover:
\begin{displaymath}
  a_{n+1} \meet a_n \leq a_n \ll b_n \leq b_{n+1} 
\end{displaymath}
\begin{displaymath}
  a_{n+1} - a_n \leq a_n \bigtriangleup a_{n+1} \ll  x_{n+1} \leq b_{n+1}
\end{displaymath}
So $a_{n+1} = (a_{n+1} - a_n) \join (a_{n+1} \meet a_n) \ll b_{n+1}$.
Finally $b_n \bigtriangleup b_{n+1} \leq x_{n+1}$ hence $b_n \bigtriangleup b_{n+1} \in (n+1)L$.

So we can continue this construction by induction. It gives a sequence
$(b_n)_{n < \omega}$ of elements of ${(d-1)L}$ such that 
$a_n \ll b_n$ for every $n$. 
Moreover $\dist{L}(b_n,b_{n+1}) \leq 2^{-n-1}$ hence this is a Cauchy
sequence. Let $b'$ be its limit in $L'$ with respect to $\dist{}'$. 
By the induction hypothesis $\overline{(d-1)L}=(d-1)L'$ hence 
$b' \in (d-1)L'$, that is $\codim_{L'} b' \geq d-1$.
Since $a_n \join b_n = b_n$ and $b_n - a_n = b_n$ for every $n$,  the same
holds for $a',b'$ by continuity. So $a' \leq b'$ and $b'-a'=b'$
that is $a' \ll b'$. By theorem~\ref{th:dim-et-rang} we
conclude that $\codim_{L'} a' \geq d$ that is $a' \in dL'$. 

This ends the proof that $dL'=\overline{dL}$ for every positive
integer $d$. It follows that $\dist{L'}=\dist{}'$. In particular
$\dist{L'}$ is a metric on $L'$. Moreover $L'/dL'=L/dL$ since 
$\Ker \pi_{dL'} = \Ker \overline{\pi_{dL}}$.
\end{proof}

As in section~\ref{se:pre-compact}, for every co-Heyting algebra
$L$, let $\widehat{L}$ denote the limit of the projective system:
\begin{displaymath}
  \cdots \to L/(d+1)L \to L/dL \to \cdots \to L/0L = \{\ZERO\} 
\end{displaymath}
with projections $\pi_{d,d+1}:L/(d+1)L \to L/dL$. Recall that $\widehat
L$ can be represented as: 
\begin{displaymath}
  \widehat{L} = \bigl\{ (x_d)_{d<\omega} \tq \forall d,\ x_d \in L/dL \mbox{ and }
    \pi_{d,d+1}(x_{d+1})=x_d \bigr\}
\end{displaymath}

\begin{theorem}\label{co:completion-pro-lim}
  Let $L$ be a co-Heyting algebra. Then $\widehat L$ is the
  Hausdorff completion of $L$, and the projective topology on
  $\widehat{L}$ coincides with its codimetric topology.
\end{theorem}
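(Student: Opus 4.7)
The plan is to build a canonical $\ltc$-isomorphism $\phi:L' \to \widehat{L}$, where $L'$ is the Hausdorff completion of $L$ constructed in Theorem~\ref{th:complete}, and then observe that it transports the codimetric topology on $L'$ to the projective topology on $\widehat{L}$. As a preliminary reduction, I would assume $L$ is already Hausdorff; otherwise replace $L$ by $L/\omega L$, noting that $L$ and $L/\omega L$ have the same Hausdorff completion, and that $\omega L \subseteq dL$ for every finite $d$ gives $L/dL = (L/\omega L)/d(L/\omega L)$ via remark~\ref{re:identification-quotient}, so the two projective limits coincide as well.

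Assuming $L$ Hausdorff, Theorem~\ref{th:complete} identifies $L'/dL'$ with $L/dL$ for every $d$, with $\pi_{dL'}$ extending $\pi_{dL}$. These quotient maps are obviously compatible with the transition morphisms $\pi_{d,d+1}$ of the projective system (both equal $\pi_{dL'}$ composed with the inclusion $L \hookrightarrow L'$ modulo their kernels), so the universal property of $\widehat{L}$ produces a canonical $\ltc$-morphism
\[
\phi : L' \to \widehat{L}, \qquad \phi(a') = \bigl(\pi_{dL'}(a')\bigr)_{d<\omega}.
\]
The key step is to show that $\phi$ is bijective. For injectivity, if $\phi(a') = \phi(b')$ then $a' \bigtriangleup b' \in \bigcap_{d<\omega} dL' = \{\ZERO\}$ since $L'$ is Hausdorff by Theorem~\ref{th:complete}. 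For surjectivity, given a thread $(x_d)_{d<\omega} \in \widehat{L}$, I would lift each $x_d$ to some $a_d \in L$ using surjectivity of $\pi_{dL}$; the compatibility $\pi_{e,d}(x_d)=x_e$ for $e \leq d$ forces $\pi_{eL}(a_d)=\pi_{eL}(a_e)$, hence $a_d \bigtriangleup a_e \in eL$, so $(a_d)_{d<\omega}$ is Cauchy in $L$. Its limit $a' \in L'$ satisfies $\pi_{dL'}(a') = x_d$ for all $d$ by continuity of $\pi_{dL'}$, giving $\phi(a')=(x_d)$.

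For the topological statement, a basis of neighborhoods of $\phi(a')$ in the projective topology of $\widehat{L}$ consists of the cylinders $\{y \in \widehat{L} \tq y_d = \phi(a')_d\}$ for $d<\omega$; their preimages under $\phi$ are precisely the sets $\{b' \in L' \tq a' \bigtriangleup b' \in dL'\}$, which are the codimetric balls of radius $2^{-d}$ around $a'$ in $L'$. Hence $\phi$ is a homeomorphism, and the projective and codimetric topologies on $\widehat{L}$ coincide. I expect the main technical point to be the surjectivity step: verifying that the ``Cauchy lifting'' procedure actually produces the intended thread, which requires the identification $L'/dL' = L/dL$ from Theorem~\ref{th:complete} and the continuity of each $\pi_{dL'}$. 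Everything else is then a direct consequence of the universal property of the projective limit and the setup already established.
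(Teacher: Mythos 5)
Your proposal is correct and follows essentially the same route as the paper's own proof: reduction to the Hausdorff case via $L/\omega L$, the morphism $a' \mapsto (\pi_{dL'}(a'))_{d<\omega}$ built from the identification $L'/dL'=L/dL$ of theorem~\ref{th:complete}, injectivity from $\bigcap_{d<\omega} dL'=\{\ZERO\}$, surjectivity by lifting a thread to a Cauchy sequence in $L$, and the identification of cylinders with codimetric balls. The only cosmetic difference is that you make the homeomorphism argument slightly more explicit where the paper simply invokes $\widehat{L}/d\widehat{L}=L/dL$.
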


\begin{proof}
Let $L_0=L/\omega L$ be the largest Hausdorff quotient of $L$. 
Then $L$ is also the completion of $L_0$, and $L_0/dL_0=L/dL$ for every
positive integer $d$. So we may assume that $L=L_0$, 
that is $L$ is Hausdorff.

Let $L'$ be the completion of $L$. We know that $L'/dL'=L/dL$ for
every positive integer $d$ by theorem~\ref{th:complete}. 
So $x \mapsto (\pi_{dL'}(x))_{d<\omega}$ defines an $\ltc$\--morphism 
$\varphi:L' \to \widehat{L}$ whose restriction to $L$ is the canonical
embedding of $L$ in $\widehat{L}$.

$\Ker \varphi = \bigcap_{d<\omega} \Ker \pi_{dL'} =\{\ZERO\}$ hence $\varphi$ is
injective. In order to show that it is surjective let us take any
element $y=(y_d)_{d<\omega}$ in the projective limit. Then each
$y_d=\pi_{dL}(x_d)$ for some $x_d \in L$. Since $\pi_{d,d+1}(y_{d+1})=y_d$
and $\pi_{d,d+1}\circ\pi_{(d+1)L}=\pi_{dL}$ we have $\pi_{dL}(x_{d+1})=\pi_{dL}(x_d)$
so $x_d \bigtriangleup x_{d+1}\in dL$. It follows that $(x_d)_{d<\omega}$ is a
Cauchy sequence in $L$ hence it converges to some $x \in L'$. 

\begin{displaymath}
  \pi_{dL'}(x)=\lim \pi_dL(x_n)=\pi_{dL'}(x_d)
\end{displaymath}
So $\varphi(x) = (\pi_{dL'}(x))_{d<\omega} = (\pi_{dL'}(x_d))_{d<\omega} = (y_d)_{d<\omega} =
y$. This ends the proof that $\varphi$ is an $\ltc$\--isomorphism. By 
theorem~\ref{th:complete} it follows that
$\widehat{L}/d\widehat{L}=L/dL$ (see
remark~\ref{re:identification-quotient}) for every positive integer
$d$. So the projective topology of $\widehat{L}$ coincides with the
codimetric topology.
\end{proof}

\begin{remark}
  A quotient of a co-Heyting algebra $L$ by an ideal $I$ is finite
  dimensional if and only if $dL \subseteq I$ for some $d$ (see
  corollary~\ref{co:TC-morph-codim}). So $\widehat L$ is also the
  limit of the projective system of {\em all} finite dimensional
  quotients of $L$. 
\end{remark}

\begin{corollary}\label{pr:partie-compact}
  A subset $X$ of a complete Hausdorff co-Heyting algebra $L$ is
  compact if and only if is closed and $\pi_d(X)$ is finite for every
  positive integer $d$.
\end{corollary}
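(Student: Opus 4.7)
The plan is to exploit the identification from Theorem~\ref{co:completion-pro-lim}: since $L$ is complete Hausdorff, $L=\widehat L$ sits inside the product $\prod_{d<\omega}L/dL$ as the projective limit, and its codimetric topology coincides with the projective topology inherited from this product. Recall moreover (Remark~\ref{re:dim-finie}) that each quotient $L/dL$ carries the discrete topology.

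For the forward direction, I would argue quickly: if $X$ is compact then $X$ is closed because $L$ is Hausdorff (as a Hausdorff co-Heyting algebra). Each projection $\pi_d:L\to L/dL$ is continuous by Proposition~\ref{pr:morph-lipshitz}, so $\pi_d(X)$ is a compact subset of the discrete space $L/dL$, hence finite.

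For the converse, assume $X$ is closed in $L$ and $\pi_d(X)$ is finite for every $d$. Viewing $L\subseteq\prod_{d<\omega}L/dL$, set
\[
  Y=\prod_{d<\omega}\pi_d(X).
\]
Since each factor $\pi_d(X)$ is finite (hence compact discrete), Tychonoff's theorem gives that $Y$ is compact. The projective limit $L$ is closed in the product (it is cut out by the closed equalities $\pi_{d,d+1}(x_{d+1})=x_d$), and $X$ is closed in $L$, so $X$ is closed in $\prod_{d<\omega}L/dL$. Clearly $X\subseteq Y$, so $X$ is a closed subset of the compact set $Y$, and therefore compact.

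No real obstacle is expected; the only delicate point is remembering that the codimetric topology on $L$ genuinely equals the projective topology (so that both directions of the argument refer to the same notion of compactness), which is exactly the content of Theorem~\ref{co:completion-pro-lim} and Remark~\ref{re:dim-finie}.
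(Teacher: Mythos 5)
Your proof is correct and follows essentially the same route as the paper: both directions rest on the identification $L=\widehat{L}$ from Theorem~\ref{co:completion-pro-lim}, with the forward implication coming from continuity of the maps $\pi_d$ into the discrete quotients $L/dL$, and the converse from compactness of a profinite object. The only cosmetic difference is that the paper describes the closure $\overline{X}$ explicitly as the projective limit of the finite sets $\pi_k(X)$, whereas you realize $X$ as a closed subset of the Tychonoff-compact product $\prod_{d<\omega}\pi_d(X)$ --- the same argument in slightly different packaging.
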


\begin{proof}
If $X$ is compact it is obviously closed. Moreover for any positive
integer $d$ the sets $U(x,d)=\{y \in L \tq x \bigtriangleup y \in dL\}$ form an open cover
of $X$ as $x$ ranges over $X$. By compactness there is a finite subset
$X_d$ of $X$ such that $\{U(x,d)\}_{x \in X_d}$ covers $X$. Then
$\pi_d(X)=\pi_d(X_d)$ is finite.

Conversely since $L = \widehat{L}$ by
corollary~\ref{co:completion-pro-lim}, the topological closure of $X$ is
known to be:
\begin{displaymath}
  \overline{X} = \{ (x_k)_{k<\omega} \in \widehat{L} \tq \forall k,\ x_k \in \pi_k(X)\} 
\end{displaymath}
So if $X$ is closed and every $\pi_k(X)$ is finite then $X =
\overline{X}$ is compact as the limit of a projective system of
finite discrete spaces.
\end{proof}

A pseudometric space is called {\df precompact} if and only if
its Hausdorff completion is compact. The following corollary, which
immediately follows from corollaries~\ref{co:completion-pro-lim} and
\ref{pr:partie-compact} justifies
our terminology for precompact co-Heyting algebras.

\begin{corollary}\label{co:complet-compact}
  The Hausdorff completion of a co-Heyting algebra $L$ is compact if
  and only if $L/dL$ is finite for every positive integer $d$. 
\end{corollary}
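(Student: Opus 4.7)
The plan is to deduce this directly from the two corollaries that immediately precede it, essentially by applying Corollary~\ref{pr:partie-compact} to the whole space $\widehat{L}$ itself.

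First, by Corollary~\ref{co:completion-pro-lim}, the Hausdorff completion of $L$ is (isomorphic to) $\widehat{L}$, and its codimetric topology coincides with the projective topology. In particular, $\widehat{L}$ is a complete Hausdorff co-Heyting algebra, so it is a legitimate target for Corollary~\ref{pr:partie-compact}.

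Next, I would apply Corollary~\ref{pr:partie-compact} to the subset $X = \widehat{L}$ of $\widehat{L}$. This subset is trivially closed, so compactness of $X = \widehat{L}$ is equivalent to finiteness of $\pi_{d\widehat{L}}(\widehat{L}) = \widehat{L}/d\widehat{L}$ for every positive integer $d$. By Theorem~\ref{th:complete} (or directly from the construction recalled in Theorem~\ref{co:completion-pro-lim}), we have $\widehat{L}/d\widehat{L} = L/dL$, so the condition $\widehat{L}/d\widehat{L}$ finite is exactly $L/dL$ finite.

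The two implications then read: if $\widehat{L}$ is compact, then each $L/dL = \widehat{L}/d\widehat{L}$ is finite by the forward direction of Corollary~\ref{pr:partie-compact}; conversely, if each $L/dL$ is finite, then each $\widehat{L}/d\widehat{L}$ is finite and the backward direction of Corollary~\ref{pr:partie-compact} yields compactness of $\widehat{L}$. There is essentially no obstacle here: all the work has been done in Corollaries~\ref{co:completion-pro-lim} and~\ref{pr:partie-compact}, and the only thing to check is the bookkeeping identification $\widehat{L}/d\widehat{L} = L/dL$, which is already built into the construction of the completion.
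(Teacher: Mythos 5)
Your proof is correct and follows exactly the route the paper intends: the paper offers no separate argument for corollary~\ref{co:complet-compact}, stating only that it ``immediately follows from corollaries~\ref{co:completion-pro-lim} and \ref{pr:partie-compact}'', which is precisely your application of corollary~\ref{pr:partie-compact} to $X=\widehat{L}$ together with the identification $\widehat{L}/d\widehat{L}=L/dL$. Your bookkeeping (completeness and Hausdorffness of $\widehat{L}$, closedness of $\widehat{L}$ in itself, surjectivity of $\pi_{d\widehat{L}}$) fills in the details correctly.
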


We conclude with two delightful results which show that some metric
properties of complete co-Heyting algebra have a familiar flavour.
Recall that a sequence $(x_n)_{n < \omega}$ in a pseudometric space
$(X,\dist{})$ is convergent to $y$ if and only if $\dist{L}(x,y)$ is
convergent to 0. The uniqueness of the limit holds only in the
Hausdorff case. 

\begin{theorem}\label{th:gendarmes}
  Consider three sequences in a co-Heyting algebra $L$ such that $c_n
  \leq b_n \leq a_n$ for every $n < \omega$. If $a_n$ and $c_n$ converge to the
  same limit $l$ then $b_n$ is convergent to $l$.
\end{theorem}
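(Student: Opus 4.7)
The plan is to prove the squeeze-type inequality
\[
  \dist{L}(b_n,l)\ \leq\ \max\bigl(\dist{L}(a_n,l),\,\dist{L}(c_n,l)\bigr),
\]
from which the convergence of $b_n$ to $l$ is immediate. The idea is to bound $b_n\bigtriangleup l$ above by a join of things controlled by $a_n\bigtriangleup l$ and $c_n\bigtriangleup l$, and then use the fundamental identity $\codim(x\join y)=\min(\codim x,\codim y)$ from remark~\ref{re:codim-join-max} together with the fact that $x\leq y$ implies $\codim x\geq\codim y$.

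The main ingredient is the monotonicity of the subtraction operation of a co-Heyting algebra, which follows immediately from the characterization $a-b=\min\{c\tq a\leq b\join c\}$ recalled in section~\ref{se:preprequesites}. Explicitly: if $x\leq y$ then $x-l\leq y-l$ (because any $c$ with $y\leq l\join c$ also satisfies $x\leq l\join c$), and $l-x\geq l-y$ (because any $c$ with $l\leq x\join c$ also satisfies $l\leq y\join c$). Applying this to the hypotheses $c_n\leq b_n\leq a_n$ yields
\[
  b_n-l\ \leq\ a_n-l\qquad\hbox{and}\qquad l-b_n\ \leq\ l-c_n,
\]
hence
\[
  b_n\bigtriangleup l\ =\ (b_n-l)\join(l-b_n)\ \leq\ (a_n-l)\join(l-c_n)\ \leq\ (a_n\bigtriangleup l)\join(c_n\bigtriangleup l).
\]

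From this inequality and the two elementary facts recalled above one obtains at once
\[
  \codim(b_n\bigtriangleup l)\ \geq\ \min\bigl(\codim(a_n\bigtriangleup l),\ \codim(c_n\bigtriangleup l)\bigr),
\]
which translates directly into the desired ultrametric inequality $\dist{L}(b_n,l)\leq\max(\dist{L}(a_n,l),\dist{L}(c_n,l))$. Since both bounds on the right tend to $0$ by hypothesis, so does $\dist{L}(b_n,l)$, completing the proof.

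There is no genuine obstacle here: the argument is entirely elementary once the right monotonicity of $-$ is isolated. The only point deserving attention is getting the two monotonicities of subtraction in the correct direction (decreasing in the second variable, increasing in the first), which controls whether $a_n$ or $c_n$ provides the bound on each of the two pieces $b_n-l$ and $l-b_n$ of the symmetric difference.
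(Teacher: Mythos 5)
Your proof is correct and follows essentially the same route as the paper: the paper sets $u_n=(a_n-l)\join(c_n-l)$ (an apparent typo for $(a_n-l)\join(l-c_n)$, which is exactly the bound you derive), observes $b_n\bigtriangleup l\leq u_n$, and concludes because $u_n$ tends to $\ZERO$ by continuity of term functions. Your version merely makes explicit the two monotonicity properties of $-$ and replaces the appeal to term-continuity by the direct codimension estimate via remark~\ref{re:codim-join-max}, so the underlying argument is identical.
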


\begin{proof}
Let $u_n = (a_n - l) \join (c_n -l)$, this sequence is convergent to
$\ZERO$ (by continuity of the terms). By assumption $b_n \bigtriangleup l \leq u_n$
hence $\codim b_n \bigtriangleup l \geq \codim u_n$ for every positive integer $n$. So
$\dist{L}(b_n,l) \leq \dist{L}(u_n,\ZERO)$ is convergent to 0. 
\end{proof}

\begin{corollary}\label{co:CV-monotone}
  Every monotonic sequence in a compact subset $X$ of a 
 co-Heyting algebra $L$ is convergent.
\end{corollary}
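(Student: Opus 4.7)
The plan is to show that any monotonic sequence $(b_n)_{n<\omega}$ in $X$ is Cauchy with respect to $\dist{L}$, and then to locate its limit inside $X$ by passing to the Hausdorff completion. The two cases are symmetric, so I will argue for $(b_n)$ increasing.

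For the Cauchy step, fix a positive integer $d$. By Proposition~\ref{pr:morph-lipshitz} the projection $\pi_{dL} : L \to L/dL$ is continuous, and by Remarks~\ref{re:dim-L/dL} and~\ref{re:dim-finie} the codimetric topology on $L/dL$ is discrete. Hence $\pi_{dL}(X)$ is a compact discrete subspace, so it is finite. The sequence $(\pi_{dL}(b_n))_{n<\omega}$ is monotonic in this finite ordered set, hence eventually constant. This yields an integer $N_d$ such that $b_n \bigtriangleup b_m \in dL$ for all $n, m \geq N_d$, i.e.\ $\dist{L}(b_n, b_m) \leq 2^{-d}$. Since $d$ was arbitrary, $(b_n)$ is Cauchy.

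For the second step, let $\iota : L \to \widehat{L}$ denote the canonical map into the Hausdorff completion, which by Theorem~\ref{co:completion-pro-lim} is the projective limit of the $L/dL$. Because $\iota$ factors through $L/\omega L$ and the Hausdorff completion extends the pseudometric isometrically, $\dist{L}(a,b) = \dist{\widehat{L}}(\iota(a),\iota(b))$ for all $a, b \in L$. So $(\iota(b_n))$ is Cauchy in the complete metric space $\widehat{L}$, converging to some $\widehat{l}$. The set $\iota(X)$ is the continuous image of a compact space, hence compact in $\widehat{L}$; since $\widehat{L}$ is Hausdorff, $\iota(X)$ is closed. Therefore $\widehat{l} \in \iota(X)$, say $\widehat{l} = \iota(l)$ for some $l \in X$. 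Then $\dist{L}(b_n,l) = \dist{\widehat{L}}(\iota(b_n),\iota(l)) \to 0$, which is the desired convergence in $L$.

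The delicate point is this second step: producing a limit \emph{inside} $X$ for a Cauchy sequence from a compact subset of what may be a non-Hausdorff pseudometric space. Transferring the question to $\widehat{L}$, where $\iota(X)$ is automatically closed and completeness supplies the limit, sidesteps the difficulty neatly. A more self-contained alternative would be to extract a convergent subsequence $b_{n_k} \to l$ by sequential compactness of the Hausdorff quotient of $X$, and then conclude via the squeeze Theorem~\ref{th:gendarmes}, sandwiching $b_n$ between two reindexings of $(b_{n_k})$ obtained from the monotonicity of $(b_n)$.
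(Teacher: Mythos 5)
Your proof is correct, but it takes a genuinely different route from the paper's. The paper proves this as a direct application of the squeeze theorem (Theorem~\ref{th:gendarmes}): it extracts a subsequence $(a_{\sigma(n)})$ convergent in $X$ (sequential compactness of the compact pseudometric space $X$), uses monotonicity to sandwich each term as $a_{\sigma(n_k-1)} \leq a_k \leq a_{k+1} \leq a_{\sigma(n_{k+1})}$, and concludes by squeezing --- no completion machinery at all, which is presumably why the statement appears as a corollary of Theorem~\ref{th:gendarmes}. You instead prove the stronger fact that the sequence is Cauchy (via finiteness of $\pi_d(X)$, which is essentially the easy direction of Corollary~\ref{pr:partie-compact}, re-derived here from discreteness of $L/dL$), then transfer to the completion $\widehat{L}$, where completeness produces a limit and compactness of $\iota(X)$ in the Hausdorff space $\widehat{L}$ pulls that limit back into $X$. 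Your approach buys more information: monotonic sequences in compact sets are Cauchy, and the limit can be chosen in $X$ itself; it also makes no use of the squeeze theorem. The paper's approach buys economy: it needs neither Theorem~\ref{th:complete} nor Theorem~\ref{co:completion-pro-lim}, only the just-proved Theorem~\ref{th:gendarmes}. (Both arguments silently invoke one small standard fact: yours, that compact subsets of Hausdorff spaces are closed; the paper's, that compact pseudometric spaces are sequentially compact.) Your closing remark correctly identifies the paper's actual proof as your ``self-contained alternative''.
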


\begin{proof}
Let $(a_n)_{n<\omega}$ be a monotonic sequence in $X$.
Let $(a_{\sigma(n)})_{n<\omega}$ a subsequence convergent in $X$. If
$(a_n)_{n<\omega}$ is increasing, for every integer $k$
let $n_k$ be the smallest integer $n$ such that $a_k \leq a_{\sigma(n)}$. 
\begin{equation}\label{eq:comp-croissante}
  a_{\sigma(n_k-1)} \leq a_k \leq a_{k+1} \leq a_{\sigma(n_{k+1})}
\end{equation}
Conversely if $(a_n)_{n<\omega}$ is decreasing let $n_k$ be the smallest integer $n$
such that $a_k \geq a_{\sigma(n)}$. We have the same inequalities as in
(\ref{eq:comp-croissante}) with reverse order. In both cases 
$a_{\sigma(n_k-1)}$ and $a_{\sigma(n_{k+1})}$ converge to the same limit hence
so does $a_k$ by theorem~\ref{th:gendarmes}.
\end{proof}

\section{Appendix}
\label{se:appendix}

Proposition~\ref{pr:fg-free-TC-codimetric} allows a slight improvement
of the finite model property (to be compared with fact~\ref{fa:fmp-HA}).

\begin{proposition}\label{pr:FMP-formules}
  Let $\cV$ be a variety of co-Heyting algebras having the finite
  model property and $\theta(x)$ be a quantifier free $\ltc$\--formula. If
  there exists a $\cV$\--algebra $L$ such that $L \models \exists x\;\theta(x)$ then
  there exists a finite $\cV$\--algebra having this property.
\end{proposition}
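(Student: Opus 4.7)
The plan is to reduce $\theta$ to a simpler normal form and then locate a single finite quotient of a finitely presented $\cV$-algebra in which all the required equalities and inequalities hold simultaneously. First, I would put $\theta(x)$ in disjunctive normal form, so that $\theta \equiv \bigvee_i \theta_i$ with each $\theta_i$ a conjunction of literals; since any realizer of $\exists x\,\theta(x)$ in $L$ realizes one of the $\exists x\,\theta_i(x)$, it suffices to handle a single conjunction of literals. Using the equivalence $s=t \iff s \bigtriangleup t = \ZERO$, every literal can be rewritten as $u(x)=\ZERO$ or $u(x) \neq \ZERO$, and the conjunction of the positive atomic formulas $\bigwedge_j u_j(x)=\ZERO$ collapses into the single equation $\sigma(x)=\ZERO$ where $\sigma=\jjoin_j u_j$. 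So I may assume
\begin{displaymath}
  \theta(x) \;\equiv\; \sigma(x)=\ZERO \;\wedge\; \bigwedge_{k=1}^m \tau_k(x) \neq \ZERO.
\end{displaymath}

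Suppose $L \models \theta(a)$ for some $a \in L^n$ in a $\cV$-algebra $L$. Let $\cF$ be the free $\cV$-algebra on $n$ generators $X=(X_1,\dots,X_n)$, and let $\cF' = \cF/I$, where $I$ is the principal ideal generated by $\sigma(X)$. Then $\cF'$ is finitely presented in $\cV$, so by proposition~\ref{pr:fg-free-TC-codimetric} it is precompact Hausdorff, and in particular residually finite. The canonical morphism $\cF \to L$ sending $X$ to $a$ factors through $\cF'$ because $\sigma(X)$ maps to $\sigma(a)=\ZERO$; call this factorization $\varphi:\cF' \to L$. Writing $\bar X$ for the image of $X$ in $\cF'$, we get $\varphi(\tau_k(\bar X)) = \tau_k(a) \neq \ZERO$ for each $k$, so $\tau_k(\bar X) \neq \ZERO$ in $\cF'$.

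By residual finiteness of $\cF'$, for each $k$ I can pick an ideal $J_k$ of $\cF'$ such that $\tau_k(\bar X) \notin J_k$ and $\cF'/J_k$ is finite. Let $J = \bigcap_{k=1}^m J_k$. The natural map $\cF'/J \to \prod_{k=1}^m \cF'/J_k$ is injective, so $\cF'/J$ is finite; and since $\cV$ is a variety, $\cF'/J$ lies in $\cV$. The image $X^\flat$ of $\bar X$ in $\cF'/J$ satisfies $\sigma(X^\flat)=\ZERO$ (inherited from $\cF'$) and $\tau_k(X^\flat) \neq \ZERO$ for each $k$ (because $\tau_k(\bar X) \notin J_k \supseteq J$). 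Hence $\cF'/J \models \theta(X^\flat)$ is the desired finite $\cV$-model of $\exists x\,\theta(x)$.

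The main obstacle is that the finite model property as stated concerns only a single inequality $t(x) \neq \ZERO$, and a naive application produces one finite algebra per negated literal, not one that witnesses them all together. The key manoeuvre is to absorb the positive part $\sigma(x)=\ZERO$ as a defining relation of a finitely presented $\cV$-algebra $\cF'$, and then exploit the fact, already distilled in proposition~\ref{pr:fg-free-TC-codimetric}, that the finite model property forces $\cF'$ to be residually finite, which lets the finitely many negative conditions be separated inside a single finite quotient.
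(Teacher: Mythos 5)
Your proof is correct, and it matches the paper's proof in its first half --- the same normal form $\sigma(x)=\ZERO \wedge \bigwedge_k \tau_k(x)\neq\ZERO$ and the same appeal to proposition~\ref{pr:fg-free-TC-codimetric} --- but the finishing mechanism is genuinely different. The paper never forms $\cF'$: it works inside the free algebra $\cF_n$, uses Hausdorffness of $\cF_n$ to get a uniform finite bound $d\geq\codim\bigl(\tau_k(X)-\sigma(X)\bigr)$ for all $k$, and takes one explicit quotient $\cF_n/I$ with $I$ generated by $\sigma(X)$ and $\varepsilon_{d+1}(\cF_n)$; this quotient is finite because it is a quotient of $\cF_n/(d+1)\cF_n$ (precompactness of $\cF_n$), and the codimension bound guarantees that no $\tau_k(X)$ falls into $I$. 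You instead absorb the equation first, forming the finitely presented algebra $\cF'=\cF_n/\langle\sigma(X)\rangle$, so that the negative literals only need to be separated from $\ZERO$ --- which is exactly residual finiteness --- and you then glue the finitely many finite quotients together with the standard subdirect-product argument. Two small points. The implication ``precompact Hausdorff $\Rightarrow$ residually finite'' that you invoke is only asserted in the paper's introduction, never proved in the body, so you should spell it out; fortunately it is immediate from the definitions: a nonzero $a\in\cF'$ has some finite codimension $d$ by Hausdorffness, and then $(d+1)\cF'$ is an ideal avoiding $a$ with $\cF'/(d+1)\cF'$ finite by precompactness. That same remark shows your product trick is avoidable: the single ideal $(d+1)\cF'$ with $d=\max_k\codim\tau_k(\bar X)$ separates all the $\tau_k(\bar X)$ from $\ZERO$ simultaneously, which is in effect the paper's construction transplanted into $\cF'$. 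What your route buys is modularity --- residual finiteness is used as a black box, and no co-Heyting computation with the differences $\tau_k(X)-\sigma(X)$ is needed; what the paper's route buys is a completely explicit finite quotient obtained in one step with a uniform codimension bound.
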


\begin{proof}
We may assume that $\theta(x)$ is a conjunction of atomic and negatomic
formulas with $n$ variables. Since $t(x) \leq t'(x)$ is equivalent,
modulo the theory of co-Heyting algebras, to $t(x)-t'(x)=\ZERO$, we
can suppose that every atomic formula is of type $t(x)=\ZERO$.
Finally $t(x)=\ZERO$ and $t'(x)=\ZERO$ is equivalent to $t(x) \join
t'(x) = \ZERO$ so we can assume:
\begin{displaymath}
  \theta(x) \equiv \cconj_{i \leq r} t_i(x)\neq\ZERO \conj t(x)=\ZERO 
\end{displaymath}
Let $a$ be a tuple of elements of $L$ such that $L\models\theta(a)$. We may
assume that $L$ is generated by $a$. Let $\cF_n$ be the free
$\cV$\--algebra with $n$ generators and $\pi:\cF_n \to L$ the projection
which maps the free generators $X$ of $\cF_n$ onto $a$. Let
$(g_k)_{k<\omega}$ be an enumeration of the kernel of $\pi$.  By construction
$t(X) = g_k(X)$ for some $k$, but $t_i(X) \not\leq g_l(X)$ for every
positive integer $l$ and every $i\leq r$. By
proposition~\ref{pr:fg-free-TC-codimetric}, $\cF_n$ is Hausdorff so:
\begin{displaymath}
  \max_{i\leq r} \codim t_i(X)-g_k(X) < \omega 
\end{displaymath}
Let $d$ denote this integer. Let $I$ be the ideal of $\cF_n$ generated
by $g_k(X)$ and $\varepsilon_{d+1}(\cF_n)$, and let $b$ be the image of $X$ in
$\cF_n/I$ {\it via} the canonical projection.
By construction $\cF_n/I$ is a quotient of $\cF_n/(d+1)\cF_n$. By 
proposition~\ref{pr:fg-free-TC-codimetric} and the assumption on $\cV$, 
$\cF_n/(d+1)\cF_n$ is finite hence so is $\cF_n/I$. Moreover $t(X)$ 
belongs to $I$ and none of the $t_i(X)$'s belongs to $I$ so 
$\cF_n/I \models \theta(b)$.
\end{proof}

We have seen that if a co-Heyting algebra $L$ is finitely presented, then 
$dL$ is a principal ideal for every positive integer $d$
(corollary~\ref{co:fg-precomp} and lemma~\ref{le:precomp-isole}). This is
actually true for finitely generated co-Heyting algebras, and even
more is true:

\begin{proposition}
  For every positive integers $n,d$ there exists an $\ltc$\--term
  $t_{n,d}$ in $n$ variables such that for every co-Heyting
  algebra $L$ generated by some $a \in L^n$, $t_{n,d}(a)=\varepsilon_d(L)$.
\end{proposition}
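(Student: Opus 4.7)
The plan is to realize $t_{n,d}$ once and for all inside the free co\--Heyting algebra $\cF_n$ on $n$ generators, and then push the identity to an arbitrary $n$\--generated co\--Heyting algebra through the universal property. The key point is that in the free algebra the element $\varepsilon_d(\cF_n)$ exists and is expressible as an $\ltc$\--term in the free generators; this same term then works uniformly in every quotient.

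Concretely, I would first invoke the finite model property for intuitionistic propositional calculus (fact~\ref{fa:fmp-HA}), together with proposition~\ref{pr:fg-free-TC-codimetric} and corollary~\ref{co:fg-precomp}, to conclude that $\cF_n$ is precompact Hausdorff. Theorem~\ref{th:precomp-dL}(1) then says precisely that $d\cF_n$ is a principal ideal, with a well-defined generator $\varepsilon_d(\cF_n)\in\cF_n$. Since $\cF_n$ is (by definition) generated as an $\ltc$\--structure by an $n$\--tuple $X=(x_1,\dots,x_n)$ of free generators, there must exist an $\ltc$\--term $t_{n,d}(x_1,\dots,x_n)$ with $\varepsilon_d(\cF_n)=t_{n,d}(X)$. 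This is the term we want.

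The second step is the transfer. Let $L$ be any co\--Heyting algebra generated by a tuple $a\in L^n$. The universal property of $\cF_n$ provides a surjective $\ltc$\--morphism $\varphi:\cF_n\to L$ with $\varphi(X)=a$. Corollary~\ref{co:TC-morph-codim}(\ref{it:phi-de-epsilon}) — which applies precisely because $d\cF_n$ is principal by the first step — then yields that $dL$ is itself principal, with $\varepsilon_d(L)=\varphi(\varepsilon_d(\cF_n))$. Since $\varphi$ is an $\ltc$\--morphism, $\varphi(t_{n,d}(X))=t_{n,d}(\varphi(X))=t_{n,d}(a)$, and so $\varepsilon_d(L)=t_{n,d}(a)$, as required. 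As a byproduct we also recover the claim made in the footnote of theorem~\ref{th:precomp-dL} that $dL$ is principal for every finitely generated $L$.

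The only genuine content is the first step, namely that $d\cF_n$ is principal; everything afterwards is an immediate application of the corollary on surjective $\ltc$\--morphisms. This is where one relies on the whole precompactness/completion machinery of sections~\ref{se:pseudometric}--\ref{se:completion}: without knowing that $\cF_n$ embeds densely into a compact profinite algebra, there is no reason to expect the supremum-like element $\varepsilon_d(\cF_n)$ to already belong to $\cF_n$. Once this is in hand, however, the conclusion is a one-line consequence of the functoriality of $\varepsilon_d$ under surjective morphisms.
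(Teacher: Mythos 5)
Your proposal is correct and follows essentially the same route as the paper: both proofs obtain $t_{n,d}$ by expressing $\varepsilon_d(\cF_n)$ as an $\ltc$\--term in the free generators of $\cF_n$ (using that $d\cF_n$ is principal, which rests on the precompact Hausdorff machinery), and then transfer the identity to any $n$\--generated $L$ through the canonical surjection $\varphi:\cF_n\to L$ together with corollary~\ref{co:TC-morph-codim}(\ref{it:phi-de-epsilon}). The only cosmetic difference is that you cite theorem~\ref{th:precomp-dL}(1) for the principality of $d\cF_n$, where the paper invokes corollary~\ref{co:fg-precomp} and lemma~\ref{le:precomp-isole}; these amount to the same chain of results.
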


\begin{proof}
Let $t_{n,d}$ be an $\ltc$\--term such that in the free
co-Heyting algebra $\cF_n$ generated by an $n$\--tuple $X$, 
$t_{n,d}(X)=\varepsilon_d(\cF_n)$. Let $L$ be any co-Heyting algebra
generated by some $n$\--tuple $a$ and $\varphi$ the projection of $\cF_n$
onto $L$ which maps $X$ onto $a$.
By corollary~\ref{co:TC-morph-codim}(\ref{it:phi-de-epsilon}) 
$\varphi(\varepsilon_d(\cF_n)) = \varepsilon_d(L)$ so: 
\begin{displaymath}
   t_{n,d}(a) = \varphi(t_{n,d}(X)) = \varphi(\varepsilon_d(\cF_n)) = \varepsilon_d(L)
\end{displaymath}
\end{proof}

\begin{remark}\label{re:formule-tnd}
  Our approach does not give any explicit form for $t_{n,d}$. Such an
  expression can be derived from Bellissima's construction. Indeed
  an explicit formula for all the join irreducible elements of fixed
  dimension $d$ in the free co-Heyting algebra $\cF_n$ with $n$
  generators is provided by this construction (see \cite{bell-1986},
  or theorem~3.3 in \cite{darn-junk-2008} for a slightly better
  formula). Their join gives an expression for $t_{n,d}$, but its
  complexity seems to be discouraging for practical computations. 
\end{remark}

Let $\cV_{n,d}$ be the variety of co-Heyting algebras axiomatized by
the equation $t_{n,d+1}=\ZERO$. This is the variety of co-Heyting
algebras $L$ such that every subalgebra of $L$ generated by $n$
elements has dimension at most $d$. So a variety $\cV$ is contained in
$\cV_{n,d}$ if and only if the algebra freely generated in $\cV$ by $n$
elements has dimension at most $d$. Of course a variety $\cV$ of
co-Heyting algebras is locally finite (that is every finitely generated
algebra in $\cV$ is finite) if and only if for every positive integer
$n$ there is an integer $d(n)$ such that $\cV \subseteq \cV_{n,d(n)}$. For
every $n \geq 1$, $\cV_{n,0}$ is nothing but the variety of boolean
algebras, hence it is locally finite. On the other hand one can easily
show by adapting an example of Mardaev~\cite{mard-1984} that the
varieties $\cV_{1,d}$ for $d >1$ are distinct and not locally finite.

\begin{question}\label{qu:loca-fin}
  For which integers $n,d$ is $\cV_{n,d}$ locally finite? 
\end{question}

It is asked in \cite{bezh-grig-2005} if $\cV$ is a locally finite
variety whenever the algebra freely generated in $\cV$ by 2 elements
is finite. This is equivalent to the local finiteness of $\cV_{2,d}$
for every $d$, and it would imply that $\cV_{n,d}$ is locally finite
for every $n \geq 2$ and every $d$ because $\cV_{n,d}$ is obviously
contained in $\cV_{2,d}$.

\phantomsection
\addcontentsline{toc}{section}%
   {References}

\newcommand{\etalchar}[1]{$^{#1}$}

\end{document}